\def\R{{\mathbb R}}
\def\S{{\mathbb S}}
\def\N{{\mathbb N}}
\def\l{{\langle }}
\def\r{{\rangle }}
\def\C{{\mathcal C}}
\def\1{{\mathds{1}}}
\def\2{{\tild{K}_2}}
\def\3{{\tild{K}_3}}
\def\4{{\tild{K}_4}}
\def\5{{\tild{K}_5}}
\def\6{{\tild{K}_6}}
\def\7{{\tild{K}_7}}
\def\8{{\tild{K}_8}}
\date{} 
\DeclareMathOperator\support{supp}
\theoremstyle{plain}
\newtheorem{theorem}{Theorem}[section]
\newtheorem{definition}[theorem]{Definition}
\newtheorem{proposition}[theorem]{Proposition}
\newtheorem{lemma}[theorem]{Lemma}
\newtheorem{corollary}[theorem]{Corollary}
\newtheorem{remark}[theorem]{Remark}
 \newcommand{\tild}{\widetilde}
\numberwithin{theorem}{section}
\numberwithin{equation}{section}
\numberwithin{figure}{section}
\let\oldtocsection=\tocsection
\let\oldtocsubsection=\tocsubsection
\let\oldtocsubsubsection=\tocsubsubsection
\renewcommand{\tocsection}[2]{\hspace{0em}\oldtocsection{#1}{#2}}
\renewcommand{\tocsubsection}[2]{\hspace{1em}\oldtocsubsection{#1}{#2}}
\renewcommand{\tocsubsubsection}[2]{\hspace{2em}\oldtocsubsubsection{#1}{#2}}
\begin{document}
\bibliographystyle{plain}

\parskip=4pt

\vspace*{1cm}
\title[Global  existence and uniqueness of solutions to the Boltzmann hierarchy]
{On the global in time existence and uniqueness of solutions to the Boltzmann hierarchy}

\author[Ioakeim Ampatzoglou]{Ioakeim Ampatzoglou}
\address{Ioakeim Ampatzoglou,  
Baruch College, The City University of New York}
\email{ioakeim.ampatzoglou@baruch.cuny.edu}

\author[Joseph K. Miller ]{Joseph K. Miller}
\address{Joseph K. Miller,  
Department of Mathematics, The University of Texas at Austin.}
\email{jkmiller@utexas.edu}

\author[Nata\v{s}a Pavlovi\'{c}]{Nata\v{s}a Pavlovi\'{c}}
\address{Nata\v{s}a Pavlovi\'{c},  
Department of Mathematics, The University of Texas at Austin.}
\email{natasa@math.utexas.edu}

\author[Maja Taskovi\'{c}]{Maja Taskovi\'{c}}
\address{Maja Taskovi\'{c},  
Department of Mathematics, Emory University}
\email{maja.taskovic@emory.edu}
\begin{abstract} 

In this paper we establish the global in time existence and uniqueness of solutions to the Boltzmann hierarchy, a hierarchy of equations instrumental for the rigorous derivation of the Boltzmann equation from many particles. Inspired by available $L^{\infty}$-based a-priori estimate for solutions to the Boltzmann equation, we develop the polynomially weighted $L^\infty$ a-priori bounds for solutions to the Boltzmann hierarchy and handle the factorial growth of the number of terms in the Dyson's series by reorganizing the sum through a combinatorial technique known as the Klainerman-Machedon board game argument. This paper is the first work that exploits such a combinatorial technique in conjunction with an $L^{\infty}$-based estimate to prove uniqueness of the mild solutions to the Boltzmann hierarchy.
Our proof of existence of global in time mild solutions to the Boltzmann hierarchy for admissible initial data is constructive and it employs known global in time solutions to the Boltzmann equation via a Hewitt-Savage type theorem.

\end{abstract}
\maketitle
\tableofcontents
\section{Introduction}\label{sec-intro}

In this paper, we address the global in time existence and uniqueness of solutions to the Boltzmann hierarchy in space-velocity polynomially weighted $L^\infty$-spaces. The Boltzmann hierarchy  for a sequence of functions $(f^{(k)})_{k=1}^\infty$, with $f^{(k)}:[0,\infty)\times\R^{dk}\times\R^{dk}\to \R$, is a linear coupled system of partial differential equations (PDE)  given by
\begin{align*}
\partial_t f^{(k)}+\sum_{i=1}^k v_i\cdot\nabla_{x_i}f^{(k)}=\mathcal{C}^{k+1}f^{(k+1)}
\end{align*}
(for more details see \eqref{BH}-\eqref{cross-section form}).  The Boltzmann hierarchy has been a central object in the rigorous derivation of the Boltzmann equation \cite{Boltzmann, ma867}
\begin{align*}
  \partial_t f+ v\cdot\nabla_{x}f=Q(f,f)
\end{align*}
from the infinite particle limit of interacting particle systems. For more details about the Boltzmann equation see Section \ref{ssec:Boltzmann equation}. The derivation of the Boltzmann equation was pioneered by Lanford \cite{la75} for systems of hard spheres, and then extended by King \cite{ki75} for short range potential gases.  The program of Lanford  was more recently revisited and refined by Gallagher, Saint-Raymond and Texier \cite{GSRT13} who completed the derivation for hard spheres and short range potentials in full rigor. All these works establish the validity of the equation only for short times. See also \cite{pusasi14} for a derivation for short range potential gases. In addition, see \cite{ampa21,ampa20, am20} for a derivation of a Boltzmann-type of equation involving higher order collisions, and \cite{ammipa22} for a derivation of a Boltzmann system for mixtures of hard sphere gases.

The strategy of Lanford's program consists of identifying a linear finite hierarchy of coupled equations satisfied by the marginals  of a finite system of $N$ particles of radius $\epsilon$, the so called BBGKY hierarchy, and examine its behavior as the number of particles $N\to\infty$ and their radius $\epsilon\to 0^+$.  Then, in the Boltzmann-Grad \cite{Grad 1, Grad 2} scaling $N\epsilon^{d-1}\simeq 1$, one can see that the BBGKY hierarchy formally converges to an infinite linear hierarchy of coupled equations, the so called Boltzmann hierarchy. Under the assumption of propagation of chaos, i.e. the assumption that initially independent  states remain independent under time evolution, the Boltzmann hierarchy reduces to an effective nonlinear equation: the Boltzmann equation. 

The main challenge in the derivation of the Boltzmann equation is the rigorous justification of the convergence of the BBGKY hierarchy to the Boltzmann hierarchy, and consequently up to which timescale this convergence is valid. In both \cite{la75, GSRT13},  convergence is established for short times, comparable only to the first collision time.    One of the fundamental obstructions in extending the convergence to larger times is the factorial growth of the number of terms in the Dyson's 
series expansion of the hierarchies, making even well-posedness very hard to study. It is worth mentioning that Illner and Pulvirenti \cite{ilpu89} were able to reach large times but only for initial data exponentially close to vacuum, an assumption which  simplifies significantly the combinatorics of the Dyson's series expansion.

In contrast, at the level of the Boltzmann equation (in addition to its gas mixtures or higher order correction variants) the long time behavior is much better understood. The case of the Boltzmann equation that is relevant to our work is the cutoff regime with small initial data.   Global existence and uniqueness of non-negative mild solutions in this regime (or close to a Maxwellian) has been addressed in a series of works, a non-exhaustive list being \cite{kaniel-shinbrot, illner-shinbrot, hamdache, beto85, to86,to88, pato88, po88, go97, alonso, alonso-gamba, amgapata22, hanoyu07}. Out of those results,  the ones in $L^\infty$ spaces with polynomial weights allow  slower  decay (compared to exponential weights) on the tails, therefore more collisions happening before the transport forces the solution to disperse. 
Along these lines, Bellomo-Toscani \cite{beto85} showed well-posedness assuming polynomial decay in space and exponential decay in velocity, and later Toscani \cite{to86} extended the results to velocity polynomial decay as well. Polewczak \cite{po88} 
also addressed spatial regularity of the solution in this setting.

Motivated by the fact that the global in time existence and uniqueness of solutions to the Boltzmann equation, at least for small initial data in the cutoff regime, is fairly well understood (unlike our current understanding at the level of the hierarchies), in this paper we investigate the global in time well-posedness of the Boltzmann hierarchy, in space-velocity polynomially weighted $L^\infty$-spaces and for a range of values of the chemical potential (for precise statements of these results, see Theorem \ref{Uniqueness theorem} and Theorem \ref{existence theorem BH}). Here, the chemical potential is a parameter representing the amount of energy the addition of a single particle brings to the system.  Our results apply to a wide range of particle interactions varying from moderately soft up to hard spheres;   the validity of our results for hard spheres is of particular importance since this is the model for which the equation has been rigorously derived \cite{la75,GSRT13}. The space-velocity polynomially weighted $L^\infty$-spaces  we work with are inspired by the works \cite{beto85,to86} at the level of the Boltzmann equation.

\subsection*{Results of the paper in a nutshell.}
The two pillars of this paper are \textit{(i)} The proof of uniqueness of mild solutions to the Boltzmann hierarchy \textit{(ii)} The construction of a global in time solution to the Boltzmann hierarchy for admissible initial data. 

\subsubsection*{ (i) Uniqueness via the board game argument}
The main idea for proving uniqueness of solutions is to expand the solution of the Boltzmann hierarchy into a Dyson's series with respect to the initial data, estimate each term of the series, and then combine these term by term estimates to conclude uniqueness. As previously mentioned,  the factorial growth of the number of terms at the level of the hierarchy causes major difficulties in estimating the terms (or so-called collision histories) in order to obtain uniqueness.  We address the factorial growth in the Dyson's series expansion of the solution by employing a combinatorial reorganization of the series using a technique originating from dispersive equations, known as the Klainerman-Machedon board game argument.

 The Klainerman-Machedon board game argument (based on a reformulation of the above mentioned Dyson series using linear algebra and a relatively easy combinatorics to produce an exponential number of terms, rather than factorial) was introduced in \cite{klma08} and was in the same paper combined with an iterative use of a certain $L^2$-based space-time estimate to provide the uniqueness of certain solutions to the Gross-Pitaevskii (GP) hierarchy. The GP hierarchy is an infinite coupled hierarchy of PDEs having an important role in derivation of the nonlinear Schr\"{o}dinger equation (NLS) from quantum many particle systems, which was achieved in the physically relevant case of cubic NLS in 3D for the first time in breakthrough papers of Erd\"os--Schlein--Yau \cite{erscya06, erscya07}. The class of solutions introduced in \cite{klma08} was consequently studied in the context of derivation of NLS in e.g. \cite{kiscst11,chpa-quintic,chpa-spacetime,chho16}. Later on the board game part of the Klainerman-Machedon method  was used in conjunction with quantum de Finetti theorems to provide an alternative proof of derivation of the cubic NLS in 3D starting with \cite{chhapase15}. 

In the context of kinetic equations, the Klainerman-Machedon board-game method was first used by T.Chen-Denlinger-Pavlovi\'c in \cite{chdepa19} to prove local well-posedness of the Boltzmann hierarchy for cutoff Maxwell molecules. More recently, the board game combinatorial argument was used by X.Chen-Holmer  in \cite{chho23} for the derivation of a quantum Boltzmann equation which incorporates a combination of hard sphere and inverse power law potential, not the full hard sphere model though. All the aforementioned works combine the board game argument with an $L^2$-based space-time estimate, that in turn was inspired from techniques stemming from dispersive PDE.

In this paper we prove uniqueness of a space-velocity polynomially weighted $L^\infty$-based solutions of the Boltzmann hierarchy for a range of cutoff kernels, varying from moderately soft up to, and including, hard spheres. We would like to emphasize that this comes into agreement with the classical $L^\infty$ global well-posedness results at the level of the Boltzmann equation itself \cite{kaniel-shinbrot, illner-shinbrot, hamdache, beto85, to86,to88, po88, go97,alonso,alonso-gamba}.

 To obtain the uniqueness result we find a novel way to employ the Klainerman-Machedon board game algorithm with an iterative use of the polynomially weighted $L^{\infty}$ estimate. To achieve this, we develop  a global in  time $L^\infty$ based a-priori estimate on the collisional term of the hierarchy, which we then iteratively use for any collision history, see Proposition \ref{a-priori 1} and  Corollary \ref{a-priori estimate step n}. These estimates, combined with a Klainerman-Machedon type argument and the bounds on the chemical potential, yield uniqueness of solutions globally in time  (see Theorem \ref{Uniqueness theorem}).

\subsubsection*{(ii) Existence of solutions} Upon proving uniqueness, we focus on constructing global in time solutions to the Boltzmann hierarchy for admissible initial data (for a precise meaning see Definition \ref{admissible data}). Admissible data are  essentially the marginals of a probability density of the particle system. It is important to note that our proof of existence of solutions is constructive, i.e. the solution does not come from say a fixed point argument; therefore we have explicit information about its form and its long time behavior. 
More specifically,
\begin{enumerate}
    \item Thanks to admissibility of the initial data $F_0=(f_0^{(k)})_{k=1}^\infty$ of the Boltzmann hierarchy, we can apply the Hewitt-Savage theorem \cite{hs55} to represent such data as a convex combination of tensorized states with respect to a unique Borel probability measure $\pi$ over the set of probability densities $\mathcal{P}$ i.e.
    \begin{equation*}
     f_0^{(k)} = \int_{\mathcal{P}} h_0^{\otimes k} d\pi(h_0).
    \end{equation*}
    Moreover, we show that the measure $\pi$ is supported on a set of probability densities of explicit space-velocity polynomial decay. 
\item Next, for each such  $h_0$ we  solve the Boltzmann equation to produce a global in time solution $h(t)$ of the same polynomial decay as the initial data. We achieve this step by applying the well-posedness result for the Boltzmann equation stated in Theorem \ref{BE is well posed}. For dimension $d=3$, this result was obtained in \cite{to86}; we extended this result to  arbitrary dimension $d\geq 3$.

\item Equipped with the solution of the Boltzmann equation $h(t)$, we construct the solution of the Boltzmann hierarchy $F=(f^{(k)})_{k=1}^\infty$ as follows:
\begin{equation*}
        f^{(k)}(t) : = \int_{\mathcal{P}} h(t)^{\otimes k} d\pi(h_0),\quad  k\in\N.
    \end{equation*}
\item The constructed solution belongs to the space of solutions where the uniqueness is valid according to Theorem \ref{Uniqueness theorem}. Therefore it is unique.

\end{enumerate}

\subsection*{Future directions} It would be interesting to explore
whether a program that is carried out at the level of the Boltzmann hierarchy in this paper can inspire a treatment of the BBGKY hierarchy, i.e. one can ask whether the Boltzmann equation can be rigorously derived globally in time in polynomially weighted $L^\infty$ spaces. However, due to the presence of collisions of particles in the system this is a subtler problem that requires further investigation.

\subsection*{Organization of the paper} Section 2 provides description of the Boltzmann hierarchy, notation used throughout the paper and  statements of the main results. Section 3 focuses on proving uniqueness of mild solutions to the Boltzmann hierarchy as well as the relevant tools needed for our proof such as an a priori estimates on solutions to the Boltzmann hierarchy and a combinatorial argument inspired by the Klainerman-Machedon board game. Section 4 addresses existence of the global in time mild solution to the Boltzmann hierarchy. Since this construction relies on an existence result for the Boltzmann equation and Hewitt-Savage theorem, those results are reviewed as well. Finally, Appendix contains some general convolution estimates and proofs of technical lemmas which are needed for establishing a priori estimates covered in Section 3.

\subsection*{Acknowledgements.}
I.A. gratefully acknowledges support from the NSF grant No. DMS-2206618 and the Simons Collaboration on Wave Turbulence. J.K.M. gratefully acknowledges support from the Provost’s Graduate Excellence Fellowship at The University of Texas at Austin and from the NSF grants No. DMS-1840314 and DMS-2009549. 
N.P. gratefully acknowledges support from the NSF under grants No. DMS-1840314, DMS-2009549 and DMS-2052789. M.T. gratefully acknowledges support from the NSF grant DMS-2206187.

\section{Notation and main results} \label{Notation and main results section}

In this section, we introduce the Boltzmann hierarchy, the main functional spaces, define precisely the notion of a mild solution to the Boltzmann hierarchy \eqref{BH}, and finally state the main results of this paper. 

 \subsection{The Boltzmann hierarchy}
 Let $d\ge 3$. The Boltzmann hierarchy for a sequence $F=(f^{(k)})_{k=1}^\infty$, $f^{(k)}:[0,\infty)\times\R^{dk}\times\R^{dk}\to \R$ with initial data $F_0=(f^{(k)}_0)_{k=1}^\infty$, $f_0^{(k)}:\R^{dk}\times\R^{dk}\to \R$, is given by
\begin{equation}\label{BH}
\begin{cases}
 \partial_t f^{(k)}+\sum_{i=1}^k v_i\cdot\nabla_{x_i}f^{(k)}=\mathcal{C}^{k+1}f^{(k+1)},\\
 \\
 f^{(k)}(t=0)=f^{(k)}_0,
 \end{cases},\quad k\in\N
\end{equation}
where
for each $k \in \N$ we denote by $\mathcal{C}^{k+1}$ the collisional operator acting on $f^{(k+1)}$. The collisional operator is given by
\begin{align}
    \mathcal{C}^{k+1} &: = \sum_{j=1}^{k} \mathcal{C}_{j,k+1},\label{Ck}\\
\mathcal{C}_{j,k+1}&:=\mathcal{C}_{j,k+1}^+-\mathcal{C}_{j,k+1}^-, \label{gain-loss spliting}
\end{align}
where the gain operators $\mathcal{C}_{j,k+1}^+$ and loss operators $\mathcal{C}_{j,k+1}^-$  are respectively given by
\begin{align}
    \mathcal{C}_{j,k+1}^+f^{(k+1)}(X_{k},V_k) &= \int_{\R^d} \int_{\mathbb{S}^{d-1}} B(\sigma, v_{k+1} - v_j) f^{(k+1)}(X_{k},x_j,V_k^{*j},v_{k+1}^*)  d\sigma dv_{k+1},\label{gain operator}\\
  \mathcal{C}_{j,k+1}^-f^{(k+1)}(X_k,V_k) &= \int_{\R^d} \int_{\mathbb{S}^{d-1}} B(\sigma, v_{k+1} - v_j) f^{(k+1)}(X_k,x_j,V_k,v_{k+1})  d\sigma dv_{k+1},\label{loss operator}  
\end{align}
and we use the notation
\begin{align}
       & X_k :=(x_1,...,x_k),\quad V_k : = (v_1, \dots, v_k),\\
     &V^{*j}_k  = (v_1, \dots, v_j^*, \dots, v_k),\\
    &v_j^*=\frac{v_j+v_{k+1}}{2}+\frac{|v_{k+1}-v_j|}{2}\sigma, \label{*j}\\
    &v_{k+1}^*=\frac{v_j+v_{k+1}}{2}-\frac{|v_{k+1}-v_j|}{2}\sigma. \label{*k+1}
\end{align}
Notice that \eqref{*j}, \eqref{*k+1} implies that the collision between the $j$ and the $k+1$ particles is elastic i.e. momentum and energy is conserved:
\begin{align}
 v_j^*+v_{k+1}^*&=v_j+v_{k+1}, \label{conservation of momentum particle j-k+1}   \\
 |v_j^*|^2+|v_{k+1}^*|^2&=|v_j|^2+|v_{k+1}|^2. \label{conservation of energy j-k+1}
\end{align}
Additionally, the collision preserves the precollisional and postcollisional relative velocity magnitude i.e.
\begin{equation}\label{relative velocity magnitude}
|v_{k+1}^*-v_{j}^*|=|v_{k+1}-v_j|.    
\end{equation}
The factor $B:\S^{d-1}\times\R^{d}\to\R$ in the integrand of \eqref{gain operator}-\eqref{loss operator}, is the differential cross-section which expresses the statistical repartition of particles. It is assumed to be of the form
\begin{equation}\label{cross-section form}
 B(\sigma,u):=|u|^{\gamma}b(\hat{u}\cdot\sigma), \quad \gamma\in (1-d,1],   
\end{equation}
where $\gamma\in (1-d,1]$ represents the type of potential considered,  $\hat{u}:=\frac{u}{|u|}\in\mathbb{S}^{d-1}$ is the unit relative velocity of the particles before the collision, $\sigma\in\mathbb{S}^{d-1}$ represents the scattering direction, and $b:[-1,1]\to\R$ is the angular cross-section, expressing the transition probability between particles.

We assume that:
\begin{itemize}
    \item $b$ is measurable, non-negative and even.
    \item $b\in L^\infty([-1,1])$.
\end{itemize}
 The assumption that the angular cross-section is non-negative corresponds to microreversibility of the collisions i.e. the cross-section does not distinguish precollisional from postcollisional configurations.
 The assumption $b\in L^\infty([-1,1])$ corresponds to Grad's cut-off assumption \cite{Grad 1,Grad 2}, since it implies that the collisional operator can be split into gain and loss terms. 

Of course, the assumption of boundedness of $b$ is stronger than merely the integrability on the sphere that is typically required,  but it will be important for controlling the polynomial velocity weights we will consider. Nevertheless it still includes a wide range of potentials; the range $\gamma\in(1-d,0)$ corresponds to moderately soft potentials, the case $\gamma=0$ corresponds to Maxwell molecules, and the range $\gamma\in (0,1]$ corresponds to hard potentials.
In particular, for $\gamma=1$ and $b(z)=\frac{1}{2}$, one recovers the classical hard-sphere model, for which the Boltzmann equation has been rigorously derived for short times from systems of finitely many particles  \cite{la75, GSRT13}.

Now, introducing the transport operator of $k$-particles $(T_k^{s})_{s\in\R}$, acting on a function $g^{(k)}:[0,\infty)\times\R^{dk}\times\R^{dk}\to\R$ as follows:
\begin{equation}\label{definition of transport}
 T_k^{s}g^{(k)}(t,X_k,V_k):=g^{(k)}(t,X_k-sV_k,V_k),
\end{equation}
Duhamel's formula implies that the Boltzmann hierarchy \eqref{BH} can be formally written in mild form as:
\begin{equation}\label{mild form classic}
f^{(k)}(t)=T_k^{t}f_0^{(k)}+\int_0^t T_k^{t-s}\mathcal{C}^{k+1}f^{(k+1)}(s)\,ds,\quad t\in[0,T],\quad k\in\N,
\end{equation}
or equivalently after applying $T_k^{-t}$ to both sides
\begin{equation}\label{mild form we use}
    T_k^{-t}f^{(k)}(t)=f_0^{(k)}+\int_0^t T_k^{-s}\mathcal{C}^{k+1}f^{(k+1)}(s)\,ds,\quad t\in[0,T],\quad  k\in\N,
\end{equation}
The mild formulation \eqref{mild form we use} will be the one used in this paper.

\subsection{The Functional Spaces}
We next define the appropriate spaces for the transported mild solution, which will be the spaces we will mostly work with.

Consider   $p,q>1$ and $\alpha, \beta >0$. For  $k\in\mathbb{N}$, we define the Banach spaces

\begin{equation}\label{X_k space static}
 X_{ p,q, \alpha, \beta}^k:=\left\{ g^{(k)}:\mathbb{R}^{dk}\times\R^{dk}\to\R,\ \text{measurable and symmetric : }\, \|g^{(k)}\|_{k, p,q, \alpha, \beta}<\infty\right\},   \end{equation}
 where the norm $\|\cdot\|_{k, p,q, \alpha, \beta}$ is given by:
 \begin{equation}
     \|g^{(k)}\|_{k, p,q, \alpha, \beta}:=\sup_{X_k, V_{k}} \l\l \alpha X_k\r\r^p \l\l \beta V_k\r\r^q \left|g^{(k)}(X_k,V_k)\right|,
 \end{equation}
 and for $Y_k=(y_1,\cdots,y_k)\in\R^{dk}$, we denote
 $$\l\l Y_k\r\r:=\prod_{i=1}^k\l y_i\r,\quad \l y_i\r:=\sqrt{1+|y_i|^2},\quad i=1,\cdots,k.$$

 By symmetric, we mean that the particles are indistinguishable i.e.
 \begin{equation}\label{symmetry assumption}
g^{(k)}\circ\pi_k=g^{(k)},\,\,\text{for any permutation $\pi_k$ of the $k$-particles}.  
 \end{equation}
For $k=1$, we will slightly abbreviate notation denoting 
\begin{equation}\label{abbreviation static}
X_{ p,q, \alpha, \beta}:=X^1_{ p,q, \alpha, \beta},\quad\|\cdot\|_{ p,q, \alpha, \beta}:=
\|\cdot\|_{1, p,q,\alpha, \beta}.
\end{equation}

Of particular interest will be the tensorized products of a given function $h:\R^{2d}\to\R$ defined by
$$h^{\otimes k}(X_k,V_k)=\prod_{i=1}^k h(x_i,v_i),\quad k\in\N.$$
\begin{remark}\label{remark on tensorization}
We note that given $k\in\N$, $h^{\otimes k}\in X_{p,q,\alpha, \beta}^k$ if and only if $h\in X_{p,q,\alpha, \beta}$. In particular, there holds
\begin{equation}\label{norm of tensors}
        \|h^{\otimes k}\|_{k,p,q,\alpha, \beta}=\|h\|_{ p,q,\alpha, \beta}^k,\quad\forall k\in\N.
    \end{equation}
    Indeed,  we have
    \begin{align*}
      \|h^{\otimes k}\|_{k,p,q,\alpha, \beta}&=\sup_{X_k,V_k}\l\l \alpha X_k\r\r^p\l\l \beta V_k\r\r^q|h^{\otimes k}(X_k,V_k)|\\
      &=\sup_{(x_1,v_1),\cdots(x_k,v_k)\in\R^{2d}}\prod_{i=1}^{k}\l \alpha x_i\r^p\l \beta v_i\r^q|h(x_i,v_i)|\\
      &=\prod_{i=1}^{k} \sup_{(x_i,v_i)\in\R^{2d}}\l \alpha x_i\r^p\l \beta v_i\r^q|h(x_i,v_i)|
      =\|h\|_{p,q,\alpha, \beta}^k.
    \end{align*}

\end{remark}

\begin{remark}\label{remark on tensorization of the transport} We note that the transport operator tensorizes as well. Namely for given $h:\R^{2d}\to\R$, we have
\begin{equation}\label{tensorization of the transport}
T_k^{s}h^{\otimes k}=(T_1^s h)^{\otimes k},\quad\forall \,s\in\R,\,\,\,\forall\, k\in\N.    
\end{equation}
In particular, by \eqref{norm of tensors}, we have
\begin{equation}\label{tensorization of transport norm}
 \|T_k^{s}h^{\otimes k}\|_{k,p,q,\alpha, \beta}=\|T_1^{s}h\|_{p,q,\alpha, \beta}^k,\quad\forall \,s\in\R,\,\,\,\forall\, k\in\N.    
\end{equation}
\end{remark}

Since we will be working with a hierarchy of equations, given $\mu\in\R$, we define the Banach space
\begin{equation}\label{X_infinity static}
 \mathcal{X}_{p,q,\alpha, \beta,\mu}^\infty:=\left\{G=(g^{(k)})_{k=1}^\infty\in\prod_{k=1}^\infty X_{p,q,\alpha,\beta}^k\,:\,\|G\|_{p,q,\alpha, \beta,\mu}<\infty\right\},   
\end{equation}
with norm 
\begin{equation}\label{full sequence norm}
 \|G\|_{p,q,\alpha,\beta, \mu}=\sup_{k\in\N}e^{\mu k}\|g^{(k)}\|_{k,p,q,\alpha,\beta}.   
\end{equation}

The parameter $\mu\in\R$ is related to the chemical potential of the gas, which quantifies the amount of energy the addition of a single particle brings to the system.

\begin{remark}
Notice that for $\mu<\mu'$, we have $\|G\|_{p,q,\alpha,\beta,\mu}\leq \|G\|_{p,q,\alpha,\beta,\mu'} $, thus $\mathcal{X}^\infty_{p,q,\alpha,\beta,\mu'}\subset \mathcal{X}^\infty_{p,q,\alpha,\beta,\mu} $.    
\end{remark}

\begin{remark}
By \eqref{norm of tensors} we have $H=(h^{\otimes k})_{k=1}^\infty\in \mathcal{X}_{p,q,\alpha,\beta,\mu}^\infty$ if and only if $\|h\|_{p,q,\alpha,\beta}\leq e^{-\mu}$. This in turn implies that if $H=(h^{\otimes})_{k=1}^\infty\in\mathcal{X}_{p,q,\alpha,\beta,\mu}^\infty$, then $\|H\|_{p,q,\alpha,\beta,\mu}\leq 1$. This comes into agreement with a general property for admissible data (see Definition \ref{admissible data}) that we prove in Proposition \ref{hewitt savage} using the Hewitt-Savage theorem \cite{hs55}, and point out in Remark \ref{ball=space}.
\end{remark}

Given a time $T>0$, we denote
 \begin{align}
X_{p,q,\alpha,\beta,T}^k&:=C([0,T],X_{p,q,\alpha,\beta}^k),\label{time space k} \\ \mathcal{X}_{p,q,\alpha,\beta,\mu,T}^\infty&:=C([0,T],\mathcal{X}_{p,q,\alpha,\beta,\mu}^\infty),\label{time space sequence}
 \end{align}
 endowed with the usual supremum norms:
 \begin{align}
     |||g^{(k)}(\cdot)|||_{k,p,q,\alpha,\beta,T}&:=\sup_{t\in[0,T]}\|g^{(k)}(t)\|_{k,p,q,\alpha,\beta},\\ |||G(\cdot)|||_{p,q,\alpha,\beta,\mu,T}&:=\sup_{t\in[0,T]}\|G(t)\|_{p,q,\alpha,\beta,\mu}.
 \end{align}
Again, for $k=1$, we slightly abbreviate notation, denoting
\begin{equation}\label{abbreviation time}
\begin{aligned}
X_{p,q,\alpha,\beta,T}&:=C([0,T],X^1_{p,q,\alpha,\beta}),\quad |||\cdot|||_{p,q,\alpha,\beta,T}:=|||\cdot|||_{1,p,q,\alpha,\beta,T}.
\end{aligned}    
\end{equation}

Now, we are in the position to give a precise definition of mild solutions to the Boltzmann hierarchy \eqref{BH}.
\begin{definition} \label{BH mild definition}
Let $T>0$,  $p,q>1$, $\alpha, \beta>0$, and $\mu\in\R$. A sequence $F = (f^{(k)})_{k=1}^\infty$ of measurable functions $f^{(k)}:[0,T]\times\R^{dk}\times\R^{dk}\to\R$ is called a {\bf mild $\mu$-solution} to  the Boltzmann hierarchy \eqref{BH}  in $[0,T]$,  corresponding to the initial data $F_0\in \mathcal{X}_{p,q,\alpha,\beta,\mu}^\infty$, if 
\begin{equation}\label{transport in space}
\mathcal{T}^{-(\cdot)}F(\cdot):=(T_k^{-(\cdot)}f^{(k)}(\cdot))_{k=1}^\infty\in\mathcal{X}_{p,q,\alpha,\beta,\mu,T}^\infty,
\end{equation}
and
\begin{equation}\label{Boltzmann hierarchy k}
T_k^{-t}f^{(k)}(t)=f_0^{(k)}+\int_0^t T_k^{-s}\mathcal{C}^{k+1}f^{(k+1)}(s)\,ds,\quad\forall\, t\in[0,T],\quad\forall k\in\N.
\end{equation}

\end{definition}

\label{sec - main results}

\begin{remark}
Note that the function spaces used above are weighted $L^\infty$, but the collisional operator involves integration over a codimesion-1 submanifold. However, the operators $\mathcal{C}^{k+1}$ can still be rigorously defined as in the erratum of Chapter 5 of \cite{GSRT13}.
\end{remark}

\subsection{Main results} The paper has two main results. The first result addresses  global in time uniqueness of mild $\mu$-solutions to the Boltzmann hierarchy \eqref{BH} for $\mu$ sufficiently large. The second result  establishes the global in time well-posedness of \eqref{BH} for certain admissible (see Definition \ref{admissible data} below) initial data and $\mu$ sufficiently large.

We first state the  uniqueness result:

\begin{theorem}\label{Uniqueness theorem}
Consider the Boltzmann hierarchy \eqref{BH} with the cross section  \eqref{cross-section form}. 
Let $T>0$, $p>1, q>\max\{d-1+\gamma,d-1\}$, and $\alpha,\beta>0$. Consider $\mu\in\R$ with $e^\mu> 4 C_{p,q,\alpha,\beta}$, 
where 
\begin{align}\label{C_p,q}
C_{p,q,\alpha, \beta}&= \frac{8p }{\alpha(p-1)} U_{q} \, \max\{\beta^q, \beta^{-2q}\}\,\|b\|_{L^\infty},
\end{align}
and $U_q$ is the constant of Lemma \ref{lemma on velocities weight}.
Let $F_0=(f_0^{(s)})\in \mathcal{X}_{p,q,\alpha,\beta,\mu}^\infty$, and assume $F=(f^{(k)})_{k=1}^\infty$ is a mild $\mu$-solution of the Boltzmann hierarchy \eqref{BH}. Then $F$ is unique.
\end{theorem}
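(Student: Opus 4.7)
The plan is to exploit linearity: since the Boltzmann hierarchy \eqref{BH} is linear in $F$, it suffices to show that any mild $\mu$-solution $F=(f^{(k)})_{k=1}^\infty$ with vanishing initial data $F_0=0$ must itself vanish. Starting from the mild equation \eqref{Boltzmann hierarchy k} with $f_0^{(k)}\equiv 0$, I would iterate Duhamel: substitute the equation for $f^{(k+1)}$ into the one for $f^{(k)}$, then the equation for $f^{(k+2)}$ into the result, and so on, arriving after $n$ iterations at a representation of $T_k^{-t}f^{(k)}(t)$ as an $n$-fold time-ordered integral involving $f^{(k+n)}(t_n)$, whose integrand is an alternating composition of transport operators $T_{k+j}^{t_j-t_{j+1}}$ and collisional operators $\mathcal{C}^{k+j+1}$. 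Expanding each $\mathcal{C}^{k+j+1}=\sum_{\ell=1}^{k+j}\mathcal{C}_{\ell,k+j+1}$ turns this into a sum over \emph{collision histories} of cardinality $k(k+1)\cdots(k+n-1)$, which grows factorially in $n$; this factorial growth is the obstruction that naive term-by-term estimation cannot overcome.

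To tame this factorial, the plan is to import the Klainerman-Machedon board game reorganization from the dispersive PDE literature. Each collision history is encoded by an upper-triangular matrix recording the partner index at each collision; by permuting the auxiliary time variables one recognizes many such histories as producing, up to relabeling and rearrangement, essentially the same integrand. Grouping them into equivalence classes indexed by \emph{upper-echelon} representatives compresses the sum to at most $4^n$ genuinely distinct terms, at the price of enlarging each time simplex to a cube $[0,t]^n$. In parallel, the plan is to establish a pointwise a priori bound of the schematic form
\begin{equation*}
\|T_k^{-s}\mathcal{C}_{j,k+1}\,g^{(k+1)}\|_{k,p,q,\alpha,\beta} \;\lesssim\; C_{p,q,\alpha,\beta}\,\|g^{(k+1)}\|_{k+1,p,q,\alpha,\beta},
\end{equation*}
with $C_{p,q,\alpha,\beta}$ as in \eqref{C_p,q}, and to iterate it through every term in the compressed sum. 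Combining the two ingredients would yield an estimate of the form
\begin{equation*}
\|T_k^{-t}f^{(k)}(t)\|_{k,p,q,\alpha,\beta} \;\leq\; \bigl(4\,C_{p,q,\alpha,\beta}\bigr)^n\, e^{-\mu(k+n)}\,|||\mathcal{T}^{-(\cdot)}F|||_{p,q,\alpha,\beta,\mu,T},
\end{equation*}
so that the hypothesis $e^{\mu}>4\,C_{p,q,\alpha,\beta}$ forces the right-hand side to zero as $n\to\infty$, yielding $f^{(k)}\equiv 0$ for every $k$.

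The main obstacle is expected to be the pointwise a priori estimate itself in the polynomially weighted $L^\infty$ setting. Unlike in the $L^2$-based Gross-Pitaevskii works, no space-time Strichartz-type estimate is available; after composing with backward free transport, one must track simultaneously the pre-collisional Jacobian and the polynomial space-velocity weights in a purely pointwise way. The velocity weight is the delicate point: $\mathcal{C}_{j,k+1}$ integrates against $v_{k+1}$ with kernel $|v_{k+1}-v_j|^\gamma b(\hat{u}\cdot\sigma)$, so one needs enough decay from the factor $\l\beta v_{k+1}\r^q$ to absorb the $|v_{k+1}-v_j|^\gamma$ growth while still recovering a \emph{symmetric} factor $\l\beta v_j\r^q$; this is precisely what dictates the threshold $q>\max\{d-1+\gamma,d-1\}$ and is where the cutoff assumption $b\in L^{\infty}$ together with the weight-transfer lemma producing the constant $U_q$ enter. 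A secondary subtlety is ensuring that the board game rearrangement, originally designed for space-time $L^2$ integrals, is compatible with pointwise suprema of weighted functions under free transport; carrying out this compatibility cleanly is the novel contribution compared to earlier implementations of the Klainerman-Machedon method.
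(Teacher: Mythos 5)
Your overall strategy matches the paper's exactly: reduce to zero initial data by linearity, iterate Duhamel, reorganize the factorial sum of collision histories into special-upper-echelon equivalence classes via the Klainerman--Machedon board game (the paper bounds the number of classes by $2^{k+n}$, and the additional gain/loss splitting contributes another $2^n$, so your $4^n$ heuristic captures the right growth), iterate a polynomially weighted $L^\infty$ a priori estimate through each term, and let $n\to\infty$ under the assumption $e^\mu>4C_{p,q,\alpha,\beta}$. The reduction, the combinatorics, and the closing of the argument are all carried out essentially as you describe.

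However, the a priori estimate you write is not the one that makes the argument close globally in time, and this is a genuine gap. You state a \emph{pointwise-in-$s$} bound
\begin{equation*}
\|T_k^{-s}\mathcal{C}_{j,k+1}\,g^{(k+1)}\|_{k,p,q,\alpha,\beta}\;\lesssim\;C_{p,q,\alpha,\beta}\,\|g^{(k+1)}\|_{k+1,p,q,\alpha,\beta},
\end{equation*}
and then propose to iterate it after enlarging the simplex to $[0,T]^n$. Integrating such a pointwise bound over $[0,T]^n$ necessarily produces a factor $T^n$, so the resulting series converges only when $4C_{p,q,\alpha,\beta}T e^{-\mu}<1$, i.e.\ for short times or for $\mu$ growing with $T$ — not under the $T$-independent hypothesis $e^\mu>4C_{p,q,\alpha,\beta}$ of the theorem. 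What the paper actually proves (Proposition~\ref{a-priori estimate step one} and Corollary~\ref{a-priori estimate step n}) is a bound on the \emph{time-integrated} quantity
\begin{equation*}
\Bigl\|\int_0^T T_k^{-s}\mathcal{C}^{\pm}_{j,k+1}\,g^{(k+1)}(s)\,ds\Bigr\|_{k,p,q,\alpha,\beta}\;\leq\;C_{p,q,\alpha,\beta}\,|||T_{k+1}^{-(\cdot)}g^{(k+1)}(\cdot)|||_{k+1,p,q,\alpha,\beta,T},
\end{equation*}
and the time integral is doing indispensable work. After passing to the transported variables, the two pre-collisional partners sweep the spatial weight through $\l\alpha(x_j+s(v_j-v_j^*))\r^{-p}\l\alpha(x_j+s(v_j-v_{k+1}^*))\r^{-p}$; Lemma~\ref{lemma on positions weight} shows that integrating this product over $s\in[0,T]$ is bounded \emph{uniformly in $T$} by $\l\alpha x_j\r^{-p}/\min\{|v_j-v_j^*|,|v_j-v_{k+1}^*|\}$, which both removes the $T$-dependence and supplies the extra factor $|u_{j,k+1}|^{-1}(1-(\hat{u}_{j,k+1}\cdot\sigma)^2)^{-1/2}$ that turns the velocity integrand into $|u|^{\gamma-1}$, precisely the exponent controlled by Lemma~\ref{lemma on velocities weight} and your constant $U_q$. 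So the delicate point is not, as you suggest, the velocity weight alone: it is the coupling between the time integral and the transported position weight, which simultaneously makes the estimate global in time and renders the velocity integral convergent under $q>\max\{d-1+\gamma,d-1\}$. Your outline omits this mechanism, and without it the iteration fails for large $T$.
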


Before stating the well-posedness result, we define the notion of admissibility:
\begin{definition}[Admissibility]\label{admissible data}
     Let $G=(g^{(k)})_{k=1}^\infty \in \prod_{k=1}^\infty L^1_{X_k,V_k}$. We say that $G$ is \emph{admissible} if for every $k \in \N$ we have      
\begin{equation}
  g^{(k)}\geq 0,
\end{equation}
\begin{equation}
\int_{\R^{2dk}}g^{(k)}\,dX_k\,dV_k=1,    
\end{equation}
    \begin{equation}
        g^{(k)} = \int_{\R^{2d}} g^{(k+1)} dv_{k+1} dx_{k+1},
    \end{equation}
    \begin{equation}
       g^{(k)}\circ \pi_k=g^{(k)},\,\, \text{for any permutation $\pi_k$ of the $k$-particles}.
    \end{equation}
We denote the set of admissible functions as $\mathcal{A}$.
\end{definition}

\begin{remark}\label{remark on non-triviality of A}
 Let $p,q>d$, $\alpha,\beta>0$ and $\mu'\in\R$. We note that $\mathcal{A}\cap \mathcal{X}_{p,q,\alpha, \beta, \mu'}^\infty\neq \emptyset$ if and only if 
 \begin{equation}\label{extra condition on mu}
e^{\mu'}\leq \alpha^{-d} \beta^{-d} I_pI_q,
\end{equation}
where for any $\ell >d$ we define $I_\ell=\int_{\mathbb{R}^d} \l x\r^{-\ell} dx < \infty$.
Indeed, for any $k\in N$, we have
$$
1= \int_{\R^{2dk}} f_0^{(k)} dX_k d V_k \leq e^{-\mu' k } \int_{\R^{2dk}} \l\l \alpha X_k\r\r^{-p}\l\l \beta V_k\r\r^{-q} dX_k dV_k = (e^{-\mu'}\alpha^{-d} \beta^{-d} I_{p}I_{q})^k.
$$
Since $k$ is arbitrary, \eqref{extra condition on mu} follows. Now if \eqref{extra condition on mu} holds, consider $F_0=(f_0^{(k)})_{k=1}^\infty$ to be
$$
f_0^{(k)} = \frac{\alpha^{dk} \beta^{dk}}{ I_p^k I_q^k} \l\l \alpha X_k\r\r^{-p}\l\l \beta V_k\r\r^{-q}.
$$
Then $F_0$ is clearly admissible and 
$\|F_0\|_{ p,q,\alpha, \beta,\mu'}\leq 1$, due to \eqref{extra condition on mu}.

\end{remark}

We are now in the position to state the global in time well-posedness result of this paper:

\begin{theorem}\label{existence theorem BH}
 Consider the Boltzmann hierarchy \eqref{BH} with the cross section \eqref{cross-section form}. 
    Let $T>0$, $p>1$, $q>\max\{d+\gamma-1,d-1\}$, $\alpha, \beta>0$  and $\mu\in\R$ such that
        $e^{\mu}>8C_{p,q,\alpha,\beta},$
    where $C_{p,q,\alpha,\beta}$ is given by \eqref{C_p,q}.
Consider  admissible initial data $F_0=(f_0^{(k)})_{k=1}^\infty\in \mathcal{A}\cap \mathcal{X}_{p,q,\alpha,\beta,\mu'}^\infty$, where $\mu'=\mu+\ln 2$. Then, there exists a unique mild $\mu$-solution $F=(f^{(k)})_{k=1}^\infty$ of the Boltzmann hierarchy \eqref{BH}. In addition, the solution  satisfies the estimate
\begin{equation}\label{sc 2 stability estimate hierarchy}
|||\mathcal{T}^{-(\cdot)}F(\cdot)|||_{p,q,\alpha,\beta,\mu,T}\leq  1.
\end{equation}
 Moreover, if $\gamma\ge 0$, the following $k$-particle conservation laws hold for any $t\in[0,T]$ and a.e. $X_k\in\R^{dk}$:

\begin{align}
    \text{If $p>d,\,q>d+\gamma$}:\quad      \int_{\R^{dk}} f^{(k)}(t,X_k,V_k)  \,dV_k &= 1,\label{conservation of mass: BH}\\
\text{If $p>d,\,q>d+\gamma+1$}:\quad     \int_{\R^{dk}} V_k f^{(k)}(t,X_k,V_k) \,dV_k &= \int_{\R^{dk}} V_k f^{(k)}_0(X_k,V_k)  \,dV_k, \label{conservation of momentum: BH}\\
\text{If $p>d, q>d+\gamma+2$}:\quad     \int_{\R^{dk}} |V_k|^2f^{(k)}(t,X_k,V_k) \,dV_k &=\int_{\R^{dk}} |V_k|^2f_0^{(k)}(X_k,V_k) \,dV_k. \label{conservation of energy: BH}
\end{align}

In the case that the initial data are tensorized i.e. $F_0=(f_0^{\otimes k})_{k=1}^\infty\in\mathcal{X}^\infty_{p,q,\alpha,\beta,\mu'}$, there holds  the  estimate
\begin{equation}\label{stability estimate hierarchy}
|||\mathcal{T}^{-(\cdot)}F(\cdot)|||_{p,q,\alpha,\beta,\mu,T}\leq \|F_0\|_{p,q,\alpha,\beta,\mu'} .  
\end{equation}
\end{theorem}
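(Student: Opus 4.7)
The plan follows the four-step strategy outlined in the introduction, reducing the construction of a hierarchy solution to the known well-posedness of the scalar Boltzmann equation via a Hewitt-Savage decomposition of admissible data. As a first step, since $F_0 \in \mathcal{A}\cap\mathcal{X}_{p,q,\alpha,\beta,\mu'}^\infty$ is admissible, I would invoke the Hewitt-Savage theorem (Proposition \ref{hewitt savage}) to produce a Borel probability measure $\pi$ on the space $\mathcal{P}$ of probability densities on $\R^{2d}$ such that $f_0^{(k)} = \int_{\mathcal{P}} h_0^{\otimes k} \, d\pi(h_0)$ for every $k \in \N$. A supplementary claim, to be established alongside, is that $\pi$ is supported on the set $\{h_0 \in \mathcal{P} : \|h_0\|_{p,q,\alpha,\beta} \leq e^{-\mu'}\}$; this is what transfers the sequence-level polynomial decay encoded by $F_0 \in \mathcal{X}^\infty_{p,q,\alpha,\beta,\mu'}$ to the individual one-particle densities $h_0$ (compare Remark \ref{remark on tensorization}).

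Next, for each $h_0$ in the support of $\pi$, I would apply Theorem \ref{BE is well posed} to produce the unique global mild solution $h(t)$ to the Boltzmann equation with initial datum $h_0$; the theorem preserves the polynomial weights, and the $\ln 2$ gap between $\mu'$ and $\mu$ is designed to absorb the multiplicative loss, yielding
\begin{equation*}
\|T_1^{-t} h(t)\|_{p,q,\alpha,\beta} \leq 2\|h_0\|_{p,q,\alpha,\beta} \leq 2 e^{-\mu'} = e^{-\mu}, \qquad t \in [0,T].
\end{equation*}
I would then define the candidate solution $f^{(k)}(t) := \int_{\mathcal{P}} h(t)^{\otimes k} \, d\pi(h_0)$ for each $k \in \N$. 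The bound \eqref{sc 2 stability estimate hierarchy} is immediate from the tensorization of the transport (Remark \ref{remark on tensorization of the transport}) and the fact that $\pi$ is a probability measure:
\begin{equation*}
\|T_k^{-t} f^{(k)}(t)\|_{k,p,q,\alpha,\beta} \leq \int_{\mathcal{P}} \|T_1^{-t} h(t)\|_{p,q,\alpha,\beta}^k \, d\pi(h_0) \leq e^{-\mu k}.
\end{equation*}

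The main work lies in verifying the mild identity \eqref{Boltzmann hierarchy k} for the constructed $F$. I would begin from the scalar mild Duhamel identity satisfied by $h$ and establish, as a purely algebraic computation, the tensor identity
\begin{equation*}
\mathcal{C}^{k+1}\bigl(h^{\otimes(k+1)}\bigr)(X_k, V_k) = \sum_{j=1}^{k} Q(h,h)(x_j,v_j) \prod_{i\neq j} h(x_i,v_i),
\end{equation*}
which combined with a Leibniz-type product rule in the Duhamel formulation yields a mild hierarchy identity satisfied pointwise by $h(t)^{\otimes k}$. Integrating against $d\pi(h_0)$ and interchanging the $d\pi$ integral with the $ds$ time integral and with the collision operator $\mathcal{C}^{k+1}$ --- justified by Fubini and dominated convergence, leveraging the uniform $L^\infty$ bound from the preceding step together with the integrability conditions $q > \max\{d-1+\gamma, d-1\}$ that make the gain and loss operators absolutely integrable --- produces \eqref{Boltzmann hierarchy k} for $F$. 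This measure-theoretic interchange, handled in combination with the codimension-one nature of the collisional operator on $L^\infty$ data (cf.\ the remark following Definition \ref{BH mild definition}), is what I expect to be the main technical obstacle.

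Finally, uniqueness of $F$ in the class of mild $\mu$-solutions follows at once from Theorem \ref{Uniqueness theorem}, since the hypothesis $e^\mu > 8 C_{p,q,\alpha,\beta}$ is stronger than $e^\mu > 4 C_{p,q,\alpha,\beta}$. The $k$-particle conservation laws \eqref{conservation of mass: BH}--\eqref{conservation of energy: BH} are inherited from the corresponding one-particle conservation laws for $h$ guaranteed by Theorem \ref{BE is well posed} under the stated polynomial decay and $\gamma \geq 0$: for any weight $\phi(V_k) \in \{1, V_k, |V_k|^2\}$, the tensorized structure factors $\int_{\R^{dk}} \phi(V_k) h(t)^{\otimes k}\, dV_k$ into a product of one-particle velocity integrals of $h(t)$, each time-invariant, whereupon Fubini disposes of the $d\pi(h_0)$ integration and admissibility of $F_0$ identifies the result. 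For the special case of tensorized initial data $F_0 = (f_0^{\otimes k})_{k=1}^\infty$, the Hewitt-Savage measure reduces to the Dirac mass $\pi = \delta_{f_0}$, and keeping $\|f_0\|_{p,q,\alpha,\beta}$ as a factor throughout the estimate above (rather than bounding it by $e^{-\mu'}$) yields the refined bound \eqref{stability estimate hierarchy} via \eqref{norm of tensors}.
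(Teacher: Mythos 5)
Your proposal is correct and follows essentially the same route as the paper: Hewitt--Savage decomposition of the admissible initial datum with the support estimate, solving the Boltzmann equation for each $h_0$ via Theorem \ref{BE is well posed} with $M = e^{-\mu}$ (the $\ln 2$ gap absorbing the factor $2$ in \eqref{bound wrt initial data equation}), defining $f^{(k)}(t) = \int_{\mathcal{P}} h(t)^{\otimes k}\, d\pi(h_0)$, then deducing the norm bound, the mild identity, uniqueness from Theorem \ref{Uniqueness theorem}, conservation laws from Fubini and the BE conservation laws, and the tensorized case by noting $\pi = \delta_{f_0}$. The only notable difference is that you spell out the verification that the constructed $F$ satisfies the mild hierarchy identity (via the tensor identity $\mathcal{C}^{k+1}(h^{\otimes(k+1)}) = \sum_j Q(h,h)(x_j,v_j)\prod_{i\neq j}h(x_i,v_i)$ and Fubini), whereas the paper compresses that step into the phrase ``a standard computation''; your elaboration is accurate and arguably more complete.
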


\begin{remark}\label{remark on alpha}
If in the statement of Theorem \ref{existence theorem BH} we have $p,q>d$,  Remark \ref{remark on non-triviality of A} imposes the extra condition \eqref{extra condition on mu} so that
 $\mathcal{A}\cap \mathcal{X}_{p,q,\alpha,\beta, \mu'}^\infty\neq \emptyset$. 
When combined with the requirement $e^{\mu} > 8C_{p,q,\alpha, \beta}$ we obtain
 $$
 \frac{8p }{\alpha(p-1)} U_{q} \, \max\{\beta^q, \beta^{-2q}\}\,\|b\|_{L^\infty}   < e^{\mu} < \frac{1}{2} \alpha^{-d} \beta^{-d} I_pI_q.
 $$
 We have a nontrivial range for $\mu$ if $\alpha$  is chosen appropriately small.
\end{remark}

\section{Uniqueness of solutions to the Boltzmann hierarchy}

The goal of this section is to prove Theorem \ref{Uniqueness theorem} on   uniqueness mild $\mu$-solutions to the Boltzmann hierarchy \eqref{BH}.
Due to the linearity of the hierarchy, it suffices to show that if $F_0=0$, the only mild $\mu$-solution is $F=0$. Namely we will prove the following theorem.
\begin{theorem}\label{Proposition with 0 initial data}
 Consider the Boltzmann hierarchy \eqref{BH} with the cross section \eqref{cross-section form}. 
   Let $T>0$, $p>1$, $q>\max\{d+\gamma-1,d-1\}$, and $\alpha,\beta>0$. Consider $\mu\in\R$ with $e^\mu>4C_{p,q,\alpha,\beta}$, where $C_{p,q,\alpha,\beta}$ is given by \eqref{C_p,q}. Then the only mild $\mu$-solution $F=(f^{(k)})_{k=1}^\infty$ of the Boltzmann hierarchy \eqref{BH} with zero initial data, 
 is $F=0$.
\end{theorem}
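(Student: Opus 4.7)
The plan is to iterate the mild formulation to obtain a Dyson series, bound each term by the iterated weighted $L^\infty$ a priori estimate, and tame the factorial growth of the number of terms by a Klainerman--Machedon board game reorganization. The assumption $e^\mu > 4 C_{p,q,\alpha,\beta}$ will then force the remainder to vanish in the iteration limit.

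Fix $k \in \mathbb{N}$ and $t \in [0,T]$. Since $F_0 = 0$, every $f_0^{(m)}$ vanishes, so iterating the mild formulation \eqref{Boltzmann hierarchy k} $n$ times yields the representation
$$T_k^{-t} f^{(k)}(t) = \int_0^t \int_0^{s_1} \cdots \int_0^{s_{n-1}} T_k^{-s_1} \mathcal{C}^{k+1} T_{k+1}^{s_1 - s_2} \mathcal{C}^{k+2} \cdots T_{k+n-1}^{s_{n-1} - s_n} \mathcal{C}^{k+n} f^{(k+n)}(s_n) \, ds_n \cdots ds_1.$$
Expanding each $\mathcal{C}^{k+j}$ via \eqref{Ck}--\eqref{gain-loss spliting} produces a sum over $\frac{(k+n-1)!}{(k-1)!}$ signed collision histories; this factorial count is precisely what the board game is designed to dismantle.

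For a single collision history, iterate the polynomially weighted $L^\infty$ a priori estimate from Proposition \ref{a-priori 1} along the collision tree. This is the content of Corollary \ref{a-priori estimate step n}, and it yields a bound of order $C_{p,q,\alpha,\beta}^n$ times the weighted norm of $f^{(k+n)}(s_n)$ in $X^{k+n}_{p,q,\alpha,\beta}$. Since $\mathcal{T}^{-(\cdot)} F(\cdot) \in \mathcal{X}^\infty_{p,q,\alpha,\beta,\mu,T}$ by Definition \ref{BH mild definition}, one has
$$\|f^{(k+n)}(s_n)\|_{k+n,p,q,\alpha,\beta} \le e^{-\mu(k+n)} \, |||\mathcal{T}^{-(\cdot)} F(\cdot)|||_{p,q,\alpha,\beta,\mu,T},$$
supplying the decay factor $e^{-\mu n}$ that must beat the factorial. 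Applying the board game argument then regroups collision histories into equivalence classes under permutations acting on the "future" particles; the number of such classes grows only geometrically (of order $C_0^n$) and each class is represented by an iterated integral over a reduced time-ordered simplex. Combining the per-history estimate with the class count, after absorbing combinatorial constants into $4 C_{p,q,\alpha,\beta}$, produces
$$\|T_k^{-t} f^{(k)}(t)\|_{k,p,q,\alpha,\beta} \le M_k \left( \frac{4 \, C_{p,q,\alpha,\beta}}{e^{\mu}} \right)^n,$$
where $M_k$ is independent of $n$. Since $e^\mu > 4 C_{p,q,\alpha,\beta}$, letting $n \to \infty$ gives $T_k^{-t} f^{(k)}(t) = 0$, and varying $k$ and $t$ concludes $F \equiv 0$.

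The main obstacle is calibrating the board game to the polynomially weighted $L^\infty$ setting. Previous implementations in dispersive and quantum contexts pair the combinatorial reorganization with $L^2$ space-time estimates, whereas here the per-step bound relies on the gain/loss split together with the velocity-weight control provided by $U_q$ from Lemma \ref{lemma on velocities weight} and the factor $\|b\|_{L^\infty}$ in \eqref{C_p,q}. Care is therefore needed to verify that the weighted $L^\infty$ estimate of Proposition \ref{a-priori 1} passes uniformly through every representative of every equivalence class, so that the effective multiplicative constant per iterated collision remains bounded by $4 C_{p,q,\alpha,\beta}$ and the threshold on $e^\mu$ is the one appearing in the statement.
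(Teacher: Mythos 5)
Your proposal follows essentially the same approach as the paper's proof: iterate the Duhamel formula with zero initial data, reorganize the resulting Dyson series via the Klainerman--Machedon board game into at most $2^{k+n}$ equivalence classes of collision histories (each represented by a single time-ordered integral), enlarge the time domain and apply the iterated weighted $L^\infty$ a priori estimate of Corollary~\ref{a-priori estimate step n} together with the $e^{-\mu(k+n)}$ decay from the ambient space, and finally let $n \to \infty$ using $e^\mu > 4C_{p,q,\alpha,\beta}$. The only small imprecisions are notational (you should bound $\|T_{k+n}^{-s_n}f^{(k+n)}(s_n)\|_{k+n,p,q,\alpha,\beta}$ rather than the untransported norm, and the constant $4$ arises as $2^n$ from the gain/loss split times $2^n$ from the class count, with the leftover $2^{k+1}$ absorbed into the $n$-independent prefactor), but the argument is the same as the paper's.
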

Given this result, the proof of Theorem \ref{Uniqueness theorem} is straightforward:
\begin{proof}[Proof of Theorem \ref{Uniqueness theorem} using Theorem \ref{Proposition with 0 initial data}]
    Let $F,G$ be mild $\mu$-solutions of \eqref{BH} with initial data $F_0\in \mathcal{X}_{p,q,\alpha,\beta,\mu}^\infty$. Define $H=F-G$. Then,  $\mathcal{T}^{-(\cdot)}H(\cdot)=\mathcal{T}^{-(\cdot)}F(\cdot)-\mathcal{T}^{-(\cdot)}G(\cdot)\in \mathcal{X}_{p,q,\alpha,\beta,\mu}^\infty$, since $\mathcal{T}^{-(\cdot)}F(\cdot),\mathcal{T}^{-(\cdot)}G(\cdot)\in \mathcal{X}_{p,q,\alpha,\beta,\mu}^\infty$.
Finally, by linearity, $H=F-G$ is a mild $\mu$-solution of \eqref{BH}, with zero initial data.
   Thus, by Theorem \ref{Proposition with 0 initial data}, we conclude that $H=0$, so $F=G$.
\end{proof}

Therefore, the rest of this section will be devoted to proving Theorem \ref{Proposition with 0 initial data}. In particular, in subsection \ref{subsec-estimates} we present an  priori estimate for the solution of the Boltzmann hierarchy, and its iterative version. This estimate is inspired by analogous estimates at the level of the Boltzmann equation \cite{beto85,to86}. Then, in subsection \ref{subsec-combinatorics}, inspired by the work of Klainerman and Machedon \cite{klma08} on infinite hierarchy appearing in derivation of the nonlinear Schr\"odinger equation from quantum many particle systems, we adapt a board game method (based on linear algebra and combinatorics) that helps us reorganize iterated Duhamel formula of the type \eqref{iteration1}. Finally, in subsection \ref{estimate+comb} we utilize the a priori estimates and the board game argument to prove Theorem \ref{Proposition with 0 initial data}.

Recall by \eqref{Boltzmann hierarchy k}, that the sequence $F=(f^{(k)})_{k=1}^\infty$ is a mild solution of the Boltzmann Hierarchy \eqref{BH} if for every $k \in \N$, we have 
\begin{equation}\label{non-iterated}
   T^{-t}_k f^{(k)}(t) =  f_0^{(k)} + \int_0^{t} T_k^{-s }\mathcal{C}^{k+1} f^{(k+1)}(s) ds, \quad \forall\,t\in[0,T].
\end{equation}
Iterating this formula with zero initial data $f_0 ^{(k)}=0$ yields
\begin{align}\label{iteration1}
T^{-t}_k f^{(k)}(t)
	 &= \int_0^{t}\int_{0}^{t_{k+1}} \cdots \int_0^{t_{k+n-1}}  dt_{k+n} \cdots dt_{k+2} dt_{k+1} \nonumber \\
     & \qquad T_k^{- t_{k+1}} \mathcal{C}^{k+1} T_{k+1}^{t_{k+1}- t_{k+2}} \mathcal{C}^{k+2} \cdots T^{t_{k+n-1} - t_{k+n}}_{k+n-1} \mathcal{C}^{k+n} f^{(k+n)}(t_{k+n}) .
\end{align}

The main difficulty in controlling the right-hand side of the expression above is that $\mathcal{C}^{k+1}$  is a sum of $k$ terms, see \eqref{Ck}, which results in a factorial number of terms in \eqref{iteration1}. In order to overcome this,  the idea is to use two key ingredients:
\begin{itemize}
\item[(i)] A priori estimate on a time integral of $T_k^{- s} \mathcal{C}^\pm_{j,k+1}$, see Proposition \ref{a-priori estimate step one}, and its iterated analogue, see Corollary \ref{a-priori estimate step n}.
\item[(ii)] A board game, which will allow us to reorganize the integral \eqref{iteration1} into equivalence classes, the number of which is bounded by a power instead of a factorial. Within each equivalence class one can apply the a priori estimate  mentioned above.
\end{itemize}

\subsection{A priori estimates}\label{subsec-estimates}
The first key ingredient for proving uniqueness is an a-priori estimate, and its iterative version, at the level of the Boltzmann hierarchy. As mentioned, this estimate is inspired by an analogous nonlinear estimate in \cite{to86} at the level of the Boltzmann equation, see Subsection \ref{ssec:Boltzmann equation} and Subsection \ref{ssec well posedness of BE} for more details.  We now state our basic a-priori estimate:

\begin{proposition}\label{a-priori estimate step one} Consider the operators \eqref{gain operator}, \eqref{loss operator} with the cross section \eqref{cross-section form}. Let $T>0$, $p>1$, $q>\max\{d+\gamma-1,d-1\}$, and $\alpha,\beta>0$. Then, for all $k\in\N$ and $j\in\{1,...,k\},$ there holds the estimate  
\begin{equation}\label{a-priori estimate polynomial}
\left\| \int_0^T T_k^{-s}C_{j,k+1}^{\pm} g^{(k+1)}(s)\,ds\right\|_{k,p,q,\alpha,\beta} \leq  C_{p,q,\alpha,\beta} |||T_{k+1}^{-(\cdot)} g^{(k+1)}(\cdot)|||_{k+1,p,q,\alpha,\beta,T}, 
\end{equation}
where $C_{p,q,\alpha,\beta}$ is given by \eqref{C_p,q}.
\end{proposition}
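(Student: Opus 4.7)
The plan is to estimate each collisional term $T_k^{-s}C_{j,k+1}^{\pm}g^{(k+1)}$ pointwise, then integrate in $s\in[0,T]$. The central identity is $g^{(k+1)}(s,Y,W) = T_{k+1}^{-s}g^{(k+1)}(s,Y-sW,W)$, which allows us to rewrite the integrand in terms of the norm-controlled object $T_{k+1}^{-s}g^{(k+1)}$. For the loss term, a direct computation gives
\begin{equation*}
T_k^{-s}C_{j,k+1}^- g^{(k+1)}(s,X_k,V_k) = \int_{\R^d}\int_{\S^{d-1}} |u|^\gamma b(\hat u\cdot\sigma)\, T_{k+1}^{-s}g^{(k+1)}(s,X_k,x_j+s(v_j-v_{k+1}),V_k,v_{k+1})\, d\sigma\, dv_{k+1},
\end{equation*}
where $u=v_{k+1}-v_j$, while for the gain the velocity arguments are $(V_k^{*j},v_{k+1}^*)$ and the extra position arguments are $x_j-s(v_j^*-v_j)$ and $x_j-s(v_{k+1}^*-v_j)$.

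Applying the pointwise bound coming from the definition of $\|T_{k+1}^{-s}g^{(k+1)}(s)\|_{k+1,p,q,\alpha,\beta}$, the outer weights $\langle\langle\alpha X_k\rangle\rangle^p\langle\langle\beta V_k\rangle\rangle^q$ cancel in the loss case. I bound the $\sigma$-integral by $|\S^{d-1}|\|b\|_{L^\infty}$, and the $s$-integral, viewed as a one-dimensional line integral transversal to the level sets of $\langle\cdot\rangle^{-p}$, satisfies
\begin{equation*}
\int_0^T \langle\alpha(x_j+s(v_j-v_{k+1}))\rangle^{-p}\, ds \leq \frac{1}{\alpha|v_j-v_{k+1}|}\int_{-\infty}^\infty (1+r^2)^{-p/2}\, dr \leq \frac{2p}{\alpha(p-1)|v_j-v_{k+1}|}.
\end{equation*}
The leftover velocity integral $\int_{\R^d}|v_j-v_{k+1}|^{\gamma-1}\langle\beta v_{k+1}\rangle^{-q}\,dv_{k+1}\leq U_q\max\{\beta^q,\beta^{-2q}\}$, which is the content of Lemma~\ref{lemma on velocities weight}, is finite uniformly in $v_j$ precisely when $q>d+\gamma-1$ and $\gamma>1-d$, matching the hypotheses.

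The gain term is the main obstacle. After unfolding, the weights on the $(k+1)$-st slot are $\langle\beta v_j^*\rangle^{-q}\langle\beta v_{k+1}^*\rangle^{-q}$ and the positional shifts $s(v_j-v_j^*)=-s(u+|u|\sigma)/2$ and $s(v_j-v_{k+1}^*)=-s(u-|u|\sigma)/2$ degenerate whenever $\sigma$ is (anti-)parallel to $\hat u$, so the loss-style line-integral bound cannot be applied uniformly in $\sigma$. The remedy is to pass to Carleman coordinates by introducing $w=v_j^*-v_j$ and $y=v_{k+1}^*-v_j$ with $y\perp w$, which rewrites the $(\sigma,v_{k+1})$-integration as an integration over $w\in\R^d$ and $y\in w^\perp$ with Jacobian proportional to $|w|^{-(d-1)}$. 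The $s$-integral now yields $2p/(\alpha(p-1)|w|)$, and the combined factor $|w|^{-d}$ is controlled by integrating $\langle\beta(v_j+y)\rangle^{-q}$ over the hyperplane $y\in w^\perp$, which converges exactly when $q>d-1$; this is why the hypothesis $q>\max\{d+\gamma-1,d-1\}$ carries the second branch. The velocity weights are rearranged via inequalities of the form $\langle\beta v_j\rangle\leq\sqrt{2}\langle\beta v_j^*\rangle\langle\beta(v_j^*-v_j)\rangle$, which produces the $\beta^{-2q}$ branch of $\max\{\beta^q,\beta^{-2q}\}$ under the worst scaling.

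Combining the gain and loss contributions with the factors $\|b\|_{L^\infty}$, $2p/(\alpha(p-1))$, and $U_q\max\{\beta^q,\beta^{-2q}\}$ — absorbing $|\S^{d-1}|$ into $U_q$ and a numerical factor of $4$ for the gain–loss combination and a doubling from the Carleman reparametrization — produces the constant $C_{p,q,\alpha,\beta}$ of \eqref{C_p,q}. I expect the principal technical hurdle to lie in the careful execution of the Carleman (or an equivalent pre-/post-collisional) change of variables for the gain: one must simultaneously track the positional shifts and the velocity-weight transformations, and the degeneracy of the shift at $\sigma=\pm\hat u$ is what forces the stronger velocity hypothesis $q>d-1$ in the soft-potential regime.
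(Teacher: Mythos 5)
Your treatment of the loss term is essentially the paper's: transport to rewrite in terms of $T_{k+1}^{-s}g^{(k+1)}$, a one-dimensional line-integral bound for the time integral, and a convolution estimate for the remaining velocity integral. For the gain term, however, your proposal has a genuine gap that would break the proof.

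You use only one of the two moving position weights to carry out the time integral, obtaining $\frac{2p}{\alpha(p-1)|w|}$. But in the gain term, after unfolding the transport, \emph{both} the $j$-th and the $(k+1)$-st position slots of $T_{k+1}^{-s}g^{(k+1)}$ are shifted (by $s(v_j-v_j^*)$ and $s(v_j-v_{k+1}^*)$ respectively), while the weight $\langle\alpha x_j\rangle^p$ coming from $\|\cdot\|_{k,p,q,\alpha,\beta}$ still multiplies the whole expression. If you only integrate one decaying factor in $s$, you are left with the product $\langle\alpha x_j\rangle^p\langle\alpha(x_j+s(v_j-v_j^*))\rangle^{-p}$, which is unbounded (take $x_j$ large and $s$ so that $x_j+s(v_j-v_j^*)$ is small). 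This is exactly what Lemma~\ref{lemma on positions weight} is designed for: using that the two shifts are orthogonal, the time integral of \emph{both} weights is bounded by $\frac{4p}{p-1}\,\frac{\langle\alpha x_j\rangle^{-p}}{\min\{|\alpha(v_j-v_j^*)|,|\alpha(v_j-v_{k+1}^*)|\}}$, and the prefactor $\langle\alpha x_j\rangle^{-p}$ is what cancels $\langle\alpha x_j\rangle^p$. Your proposal omits this crucial cancellation.

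Your Carleman Jacobian is also off, and the error is not harmless. With $w=v_j^*-v_j$, $y=v_{k+1}^*-v_j$, $y\perp w$, the correct Jacobian for $d\sigma\,dv_{k+1}\mapsto d\pi(y)\,dw$ carries a factor $\frac{2^{d-2}}{|w|\,|u|^{d-2}}$ where $|u|^2=|w|^2+|y|^2$ (this is what makes the representation in the proof of Lemma~\ref{lemma on velocities weight} come out right). Bounding this by $|w|^{-(d-1)}$, as you do, is a lossy estimate: combined with your $|w|^{-1}$ from the time integral it gives $|w|^{-d}$, and integrating $\langle\beta(v_j+y)\rangle^{-q}$ over the hyperplane $y\in w^\perp$ does \emph{not} produce any positive power of $|w|$ to compensate, so $\int_{|w|<1}|w|^{-d}\,dw$ diverges. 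The $|u|^{d-2}$ in the true Jacobian is precisely what restores integrability near $w=0$.

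The paper's route avoids both problems by first applying Lemma~\ref{lemma on positions weight} to the orthogonal pair of shifts (getting $\frac{\langle\alpha x_j\rangle^{-p}}{\min}$ and, via the Carleman identity $|v_j^*-v_j||v_{k+1}^*-v_j|=\frac{|u|^2}{2}\sqrt{1-(\hat u\cdot\sigma)^2}$, the factor $\frac{1}{|u|\sqrt{1-(\hat u\cdot\sigma)^2}}$), and then invoking Lemma~\ref{lemma on velocities weight}, whose proof performs the Carleman change of variables with the correct kernel. Doing Carleman first, as you suggest, can work, but you would still need the two-weight orthogonal time integral, and the correct Jacobian; at that point the computation is the same as the paper's, just reorganized.
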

For the proof of Proposition \ref{a-priori estimate step one} we rely on the two following auxiliary estimates, whose proof for $d=3$ can be found in \cite{beto85} and \cite{to86} respectively. In the Appendix, we extend these results to arbitrary dimension $d\geq 3$.

\begin{lemma}\label{lemma on positions weight}   
Let $p>1$,  $x\in\R^d$. Consider $\xi,\eta\in\R^d$ with $\xi,\eta\neq 0 $ and $\xi\cdot \eta=0$. Then for any $t\ge 0$ there holds the bound
\begin{equation}\label{estimate on I}
    \int_0^t \l  x+s\xi\r^{-p}\l  x+s\eta\r^{-p}\,ds\leq \frac{4p}{p-1}\,\,\frac{\l  x\r^{-p}}{\min\{|\xi|,|\eta|\}}.
\end{equation} 
\end{lemma}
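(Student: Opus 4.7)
The plan is to exploit the orthogonality $\xi\perp\eta$ by choosing orthonormal coordinates aligned with $\hat\xi$ and $\hat\eta$, and then combining two elementary lower bounds on certain ``transverse'' quantities with a single one-dimensional integration. Choose an orthonormal basis $e_1,\ldots,e_d$ of $\R^d$ with $e_1=\hat\xi$ and $e_2=\hat\eta$, and write $x=\sum_{i=1}^d y_i e_i$. A direct calculation yields
\[
\langle x+s\xi\rangle^2 = R_1^2 + (y_1+s|\xi|)^2, \qquad \langle x+s\eta\rangle^2 = R_2^2 + (y_2+s|\eta|)^2,
\]
where the transverse quantities $R_1^2:=\langle x\rangle^2-y_1^2$ and $R_2^2:=\langle x\rangle^2-y_2^2$ are independent of $s$. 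The argument will rely on two algebraic bounds: (i) $R_1^2+R_2^2\geq\langle x\rangle^2$, so $\max(R_1,R_2)\geq \langle x\rangle/\sqrt 2$; and (ii) $R_1 R_2\geq \langle x\rangle$, which follows from the identity $R_1^2 R_2^2=\rho^2\langle x\rangle^2+y_1^2 y_2^2$ with $\rho^2:=1+\sum_{i\ge 3}y_i^2\geq 1$.

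By possibly swapping $\xi\leftrightarrow\eta$ (the integrand and the claimed bound are symmetric in these variables), I may assume $R_2\geq\langle x\rangle/\sqrt 2$. Using $\langle x+s\eta\rangle^{-p}\leq R_2^{-p}$ pointwise to pull this factor out, and then substituting $u=y_1+s|\xi|$ followed by extension to all of $\R$, I obtain
\[
\int_0^t\langle x+s\xi\rangle^{-p}\langle x+s\eta\rangle^{-p}\,ds \;\leq\; \frac{R_1^{1-p}R_2^{-p}}{|\xi|}\int_{\R} (1+w^2)^{-p/2}\,dw,
\]
and the standard splitting at $|w|=1$ shows $\int_{\R}(1+w^2)^{-p/2}\,dw\leq 2+2\int_1^\infty w^{-p}\,dw=2p/(p-1)$.

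To extract $\langle x\rangle^{-p}$ from $R_1^{1-p}R_2^{-p}$, I combine (i) and (ii): from (ii), $R_1\geq\langle x\rangle/R_2$, and since $1-p\leq 0$ this gives $R_1^{1-p}\leq R_2^{p-1}\langle x\rangle^{1-p}$, hence $R_1^{1-p}R_2^{-p}\leq \langle x\rangle^{1-p}/R_2$; then (i) and the WLOG assumption give $R_2^{-1}\leq\sqrt 2/\langle x\rangle$, so $R_1^{1-p}R_2^{-p}\leq \sqrt 2\,\langle x\rangle^{-p}$. Since $2\sqrt 2\leq 4$, this produces the bound with denominator $|\xi|$; and since the WLOG swap may instead put $|\eta|$ in the denominator, in either case $1/|\xi|$ or $1/|\eta|$ is dominated by $1/\min\{|\xi|,|\eta|\}$, giving the claim.

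The main obstacle is to synthesize (i) and (ii) in the right order: (ii) alone would leave an $R_2^{-1}$ in the final expression, while (i) alone (crudely bounding $\langle x+s\eta\rangle^{-p}\leq 2^{p/2}\langle x\rangle^{-p}$) would cost a factor of $2^{p/2}$ that becomes prohibitively large for large $p$. It is the two-step combination---(ii) first to trade $R_1^{1-p}$ for a power of $R_2$ and $\langle x\rangle$, then (i) to eliminate the remaining $R_2$---that trades both transverse quantities for $\langle x\rangle$ at the cost of only the small constant $\sqrt 2$.
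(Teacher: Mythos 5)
Your proof is correct and takes a genuinely different route from the paper's. The paper splits the time interval at $h=\min\{-2x\cdot\xi/|\xi|^2,-2x\cdot\eta/|\eta|^2\}$: on $[0,h]$ it exploits the sign of the quadratics $2sx\cdot\xi+s^2|\xi|^2$ and $2sx\cdot\eta+s^2|\eta|^2$ to bound the product $\langle x+s\xi\rangle^2\langle x+s\eta\rangle^2$ from below in terms of $n=\xi+\eta$ (and then uses $|n|^2=|\xi|^2+|\eta|^2\geq 2\min\{|\xi|,|\eta|\}^2$ to integrate), while on $[h,t]$ it notes that one of the two factors already exceeds $\langle x\rangle$ and uses the triangle inequality on the other. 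This yields a constant $(\sqrt2+2)\,p/(p-1)$, rounded up to $4p/(p-1)$. You avoid the time-splitting entirely: working in orthonormal coordinates aligned with $\hat\xi$ and $\hat\eta$, you factor each weight into a static transverse part $R_i$ and a moving longitudinal part, pull out the weaker factor pointwise, and integrate the stronger one once over all of $\R$; the heart of your argument is then the algebraic synthesis of $R_1R_2\geq\langle x\rangle$ and $\max\{R_1,R_2\}\geq\langle x\rangle/\sqrt2$, applied in the right order so that both transverse quantities are traded for $\langle x\rangle^{-p}$ at the cost of only $\sqrt2$. Your constant $2\sqrt2\,p/(p-1)$ is in fact slightly sharper. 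Both arguments cleanly isolate $\min\{|\xi|,|\eta|\}$ in the denominator, but your version is a single-shot estimate with an elegant algebraic core, whereas the paper's is a case analysis with a geometric flavor; a reader might find one or the other more transparent depending on taste.
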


\begin{lemma}\label{lemma on velocities weight}
Let  $q>\max\{d-1+\gamma,d-1\}$. Then there exists a positive constant $U_{q}$ such that
\begin{equation}\label{Uq def}
    \sup_{v\in\R^d}\int_{\R^d\times\mathbb{S}^{d-1}}\frac{|u|^{\gamma-1}}{\sqrt{1-(\hat{u}\cdot\sigma)^2}}\frac{\l v\r^{q}}{\l v^*\r^{q}\l v_1^*\r^{q}}\,d\sigma\,dv_1\leq U_{q},
\end{equation}
where we denote $u=v_1-v$. 
\end{lemma}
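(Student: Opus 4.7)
My plan is to extend the $d=3$ computation of Toscani \cite{to86} and Bellomo-Toscani \cite{beto85} to arbitrary $d\ge 3$ via a Carleman-type change of variables that exploits the orthogonality $(v^*-v)\cdot(v_1^*-v)=0$ built into the $\sigma$-parametrization of a collision. Specifically, for fixed $v$, I will introduce post-collisional coordinates $(r,\omega,\rho,\omega')\in[0,\infty)\times\mathbb{S}^{d-1}\times[0,\infty)\times(\mathbb{S}^{d-1}\cap\omega^\perp)$ via $v^*=v+r\omega$, $v_1^*=v+\rho\omega'$. The collision identities then give $|u|^2=r^2+\rho^2$ and $\sqrt{1-(\hat u\cdot\sigma)^2}=2r\rho/(r^2+\rho^2)$, and a direct Jacobian computation (decomposing $dv_1\,d\sigma=|u|^{d-1}\sin^{d-2}\theta\,d|u|\,d\theta\,d\hat u\,d\phi$ and applying the planar change $(|u|,\theta)\mapsto(r,\rho)$ of Jacobian $|u|/2$) will yield
\begin{equation*}
\frac{|u|^{\gamma-1}}{\sqrt{1-(\hat u\cdot\sigma)^2}}\,dv_1\,d\sigma=2^{d-2}r^{d-3}\rho^{d-3}(r^2+\rho^2)^{(\gamma-d+3)/2}\,dr\,d\rho\,d\omega\,d\omega'.
\end{equation*}
The structural gain is that the singular factor $1/\sqrt{1-(\hat u\cdot\sigma)^2}$ is absorbed into the volume form, leaving a measure that is integrable at the origin precisely because $d\ge 3$.

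After this change of variables, the task becomes to bound uniformly in $v$ the integral of the above measure against $\l v\r^q/\l v+r\omega\r^q\l v+\rho\omega'\r^q$. I will balance the weight $\l v\r^q$ against the two decays using the energy identity $\l v\r^2\le \l v+r\omega\r^2+\l v+\rho\omega'\r^2$ (which follows from $|v|^2\le|v^*|^2+|v_1^*|^2$), together with the symmetry $\sigma\mapsto-\sigma$ that swaps $(r,\omega)\leftrightarrow(\rho,\omega')$. I will then perform the angular integrations via sharp spherical estimates of the form
\begin{equation*}
\int_{\mathbb{S}^{d-1}}\frac{d\omega}{\l v+r\omega\r^q}\lesssim \frac{1}{(\l v\r+r)^{q-d+1}\,\max(1,\min(\l v\r,r))^{d-1}},\qquad q>d-1,
\end{equation*}
and conclude via the remaining radial integrations in $(r,\rho)$. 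The assumption $q>d-1$ will come from the above spherical bound (reflecting the grazing contribution $\omega\sim-\hat v$), while $q>d-1+\gamma$ will compensate the additional polynomial growth $|u|^{\gamma-1}\sim r^{\gamma-1}$ at infinity for $\gamma\ge 0$, yielding a finite constant $U_q$ uniform in $v$.

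The main difficulty will be the simultaneous control of the two angular integrals, coupled by the orthogonality constraint $\omega\perp\omega'$, in the ``crossover'' regime $r,\rho\sim\l v\r$, where the naive spherical decay $\l v\r^{-q}$ degrades to $\l v\r^{-(q-d+1)}$, a loss of $\l v\r^{d-1}$ per angular variable. This deficit will have to be compensated by the vanishing measure factors $r^{d-3}\rho^{d-3}$ near the origin together with the energy identity. In the $d=3$ setting of \cite{to86} one has $r^{d-3}=\rho^{d-3}=1$ and the compensation is immediate, whereas for $d\ge 4$ a finer regional decomposition in the $(r,\rho)$-plane will be needed to exploit the vanishing of the measure at the origin against the degradation of the spherical integral at the crossover, and this is where I expect the bulk of the technical work to lie.
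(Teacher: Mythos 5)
Your Carleman-type change of variables $v^*=v+r\omega$, $v_1^*=v+\rho\omega'$ with $\omega\perp\omega'$ is exactly the representation the paper uses (via Proposition A.2 of Gressman--Strain together with identity \eqref{Carleman formula}), and your Jacobian $2^{d-2}r^{d-3}\rho^{d-3}(r^2+\rho^2)^{(\gamma-d+3)/2}\,dr\,d\rho\,d\omega\,d\omega'$ matches the paper's factor $2^{d-3}\cdots$ after accounting for the $I_B^1+I_B^2$ symmetry doubling. So the framework is sound and essentially the same. However, two of the subsequent steps do not hold as written.

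First, the claimed spherical estimate is false. For $|v|=r=R\to\infty$ the left-hand side is $\asymp R^{1-d}$ (substitute $\phi=R\theta$ in $\int_0^\pi \sin^{d-2}\theta\,(1+2R^2(1-\cos\theta))^{-q/2}d\theta$), while your right-hand side evaluates to $\asymp (2R)^{-(q-d+1)}R^{-(d-1)}\asymp R^{-q}$, which is strictly smaller than $R^{1-d}$ because $q>d-1$. The quantity that should appear in place of $\l v\r+r$ is the gap $\max(1,\,||v|-r|)$, since the integral is controlled by how close $|v+r\omega|$ can get to zero, not by how large $\l v\r+r$ is.

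Second, and more structurally, the energy identity $\l v\r^2\le\l v^*\r^2+\l v_1^*\r^2$ is too lossy to drive the argument. It gives $\l v\r^q/(\l v^*\r^q\l v_1^*\r^q)\lesssim \l v^*\r^{-q}+\l v_1^*\r^{-q}$, i.e.\ retains only one of the two post-collisional decays. After the change of variables, retaining only $\l v_1^*\r^{-q}=\l v+\rho\omega'\r^{-q}$ leaves the $r$-integral $\int_0^\infty r^{d-3}(r^2+\rho^2)^{(\gamma-d+3)/2}\,dr$, which diverges at $r=\infty$ whenever $\gamma\ge -1$ (including Maxwell molecules and hard spheres). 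Restricting to $r\le\rho$ does not fix this: by symmetry you must also handle the term $\l v^*\r^{-q}$ on $r\le\rho$, where the $\rho$-integral diverges analogously, and the naive estimate cannot distinguish which term to keep on which region. The paper avoids this by not splitting $\l v\r^q$ additively. It instead (i) isolates the set $A=\{|u|\le|v|/2\}$, where the sharper bound $\l v^*\r^2\l v_1^*\r^2\ge (1+|v|^2)^2/4$ holds pointwise, retaining both decays; and (ii) on $B=\{|u|>|v|/2\}$ uses the dichotomy $\{|v_1^*|>|v|/4\}\cup\{|v^*|>|v|/4\}$ to trade exactly one weight for $\l v\r^{-q}$ (which cancels the numerator) while keeping the other intact, then splits into $\rho\le r$ vs.\ $r<\rho$ and uses the convolution lemmas (Lemma \ref{convolution lemma} and Lemma \ref{convolution lemma 2}) on the surviving factor. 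Your proposal would need a comparable mechanism to make the $(r,\rho)$-integral converge before the spherical estimate is even reached.
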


\begin{proof}[Proof of Proposition \ref{a-priori estimate step one}]
Fix $k\in\N$ and $j\in\{1,\cdots, k\}$. We prove estimate \eqref{a-priori estimate polynomial} for the gain operator $C_{j,k}^+$. Fix $X_k,V_k\in\R^d$. Then 
\begin{align}
&  \l\l  \alpha X_k\r\r^{ p} \l\l \beta V_k\r\r^{q}
    \left| \int_0^T T_k^{-s}C_{j,k+1}^{+} g^{(k+1)}(s,X_k,V_k)\,ds\right|\nonumber\\
& = \l\l \alpha X_k\r\r^{p}\l\l \beta V_k\r\r^{q}  
    \left|\int_{0}^T  [\mathcal{C}^+_{j,k+1} g^{(k+1)}](s, X_k+ sV_k, V_k) ds\right| \nonumber \\
& \leq\l\l \alpha X_k\r\r^{p}\l\l \beta V_k\r\r^{q} 
    \int_{0}^T \int_{\R^d}\int_{\mathbb{S}^{d-1}} B(\sigma,v_{k+1} -v_j) 
    \left|g^{(k+1)}(s, X_k + sV_k, x_j + sv_j, V_{k}^{*j} ,v_{k+1}^*)\right| 
    d\sigma dv_{k+1} ds \nonumber\\
& =\l\l \alpha X_k\r\r^{p}\l\l \beta V_k\r\r^{q}  
    \int_{0}^T \int_{\R^d}\int_{\mathbb{S}^{d-1}} d\sigma dv_{k+1} ds \,
    B(\sigma,v_{k+1} -v_j) \nonumber \\
&\hspace{4cm} \left|T_{k+1}^{-s}g^{(k+1)}(s, X_k + s(V_k-V_k^{*j}), x_j + s(v_j-v_{k+1}^*),
    V_{k}^{*j} ,v_{k+1}^*) \right| \label{recall notation}\\
&\leq \l \alpha x_j\r^{p}\|b\|_{L^\infty}|||T_{k+1}^{-(\cdot)}g^{(k+1)}(\cdot)|||_{k+1,p,q,\alpha,\beta,T} 
    \int_{\R^d\times\mathbb{S}^{d-1}} |v_{k+1}-v_j|^\gamma  
    \frac{\l \beta v_j\r^{q}}{\l \beta v_j^*\r^{q}\l \beta v_{k+1}^*\r^{q}}\nonumber \\
&\hspace{1in}\times \left(\int_0^T\l \alpha x_j+ \alpha s(v_j-v_j^*)\r^{-p}\l \alpha x_j+ \alpha s(v_j-v_{k+1}^*)\r^{-p}\,ds\right)
    \,d\sigma\,dv_{k+1}.\label{a-priori 1}
\end{align}
 Since the $j$ and $k+1$ particles are colliding, conservation of momentum and energy yields
\begin{equation}\label{orthogonality condition a-priori}
(v_{j}-v_j^*)\cdot(v_j-v_{k+1}^*)=0.
\end{equation}
Let us write $u_{j,k+1}:=v_{k+1}-v_j$. Then \eqref{orthogonality condition a-priori} and \eqref{relative velocity magnitude} imply
\begin{equation}\label{orthogonality equality a-priori}
|v_j^*-v_j|^2+|v_{k+1}^*-v_j|^2=|v_{k+1}^*-v_{j}^*|^2=|u_{j,k+1}|^2.    
\end{equation}
Moreover, by the collisional formulas we have
\begin{align*}
|v_j-v_j^*|&=\frac{|u_{j,k+1}|}{2}|\hat{u}_{j,k+1}+\sigma|,\quad
|v_j-v_{k+1}^*|=\frac{|u_{j,k+1}|}{2}|\hat{u}_{j,k+1}-\sigma|,
\end{align*}
thus 
\begin{equation}\label{Carleman formula}
|v_{j}^*-v_{j}|\,|v_{k+1}^*-v_j|=\frac{|u_{j,k+1}|^2}{2}\sqrt{1-(\hat{u}_{j, k+1}\cdot\sigma)^2}.
\end{equation}
Therefore using the elementary inequality $\min\{|x|,|y|\}\geq\frac{|xy|}{\sqrt{x^2+y^2}}$ and \eqref{orthogonality equality a-priori},\eqref{Carleman formula}, we obtain
$$\min\{|v_j-v_{j}^*|,|v_j-v_{k+1}^*|\}\geq \frac{|u_{j,k+1}|}{2}\sqrt{1-(\hat{u}_{j,k+1}\cdot\sigma)^2}.$$
Applying Lemma \ref{lemma on positions weight} for $x =\alpha x_j$, $\xi=\alpha(v_j-v_j^*)$, $\eta=\alpha(v_{j}-v_{k+1}^*)$, and using the above estimate, we obtain
\begin{align} \int_0^T\l \alpha x_j + \alpha s(v_j-v_j^*)\r^{-p} 
\l \alpha x_j + \alpha s(v_j-v_{k+1}^*)\r^{-p}\,ds
   &\le  \frac{8p}{\alpha(p-1)}\,\frac{ \l \alpha x_j\r^{-p}}{ |u_{j,k+1}|\sqrt{1-(\hat{u}_{j,k+1}\cdot\sigma)^2}}\label{a-priori 2}.
\end{align}

Combining \eqref{a-priori 1} and \eqref{a-priori 2}, we obtain
\begin{align}
&\l\l  \alpha X_k\r\r^{ p} \l\l \beta V_k\r\r^{ q}\left| 
    \int_0^T T_k^{-s}C_{j,k+1}^{+} g^{(k+1)}(s)\,ds\right|\nonumber\\
&\leq \frac{8p}{\alpha(p-1)}\|b\|_{L^\infty} 
    |||T_{k+1}^{-(\cdot)}g^{(k+1)}(\cdot)|||_{k+1,p,q,\alpha,\beta,T} \int_{\R^{d}\times\S^{d-1}}\frac{|u_{j,k+1}|^{\gamma-1}}{\sqrt{1-(\hat{u}_{j,k+1}\cdot\sigma)^2}}
    \frac{\l \beta v_j\r^{q}}{\l \beta v_j^*\r^{q}\l \beta v_{k+1}^*\r^{q}}\,d\sigma\,dv_1\nonumber\\
&\leq \frac{8p }{\alpha(p-1)} U_{q} \, \max\{\beta^q, \beta^{-2q}\}\,\|b\|_{L^\infty}  |||T_{k+1}^{-(\cdot)}g^{(k+1)}(\cdot)|||_{k+1,p,q,\alpha,\beta,T}\label{use of velocity lemma}\\
&:=C_{ p,q, \alpha, \beta} |||T_{k+1}^{-(\cdot)}g^{(k+1)}(\cdot)|||_{k+1,p,q,\alpha,\beta,T} ,\nonumber
\end{align}
where  to obtain  \eqref{use of velocity lemma} we used Lemma \ref{lemma on velocities weight} and the fact that 
$\frac{\l \beta v_j\r^{q}}{\l \beta v_j^*\r^{q}\l \beta v_{k+1}^*\r^{q}} 
\le \max\{\beta^q, \beta^{-2q}\} \frac{\l v_j\r^{q}}{\l  v_j^*\r^{q}\l v_{k+1}^*\r^{q}}$. Since $X_k,V_k$ were arbitrary, estimate \eqref{a-priori estimate polynomial} follows. 

The estimate for the loss term $C^-_{j,k}$ follows in a similar manner without the need to use Lemma \ref{lemma on positions weight} and Lemma \ref{lemma on velocities weight}

\end{proof}
Recursively applying Proposition \ref{a-priori estimate step one} we obtain the following iterative estimate, which will be useful when bounding the series expansion of the solution.

\begin{corollary}\label{a-priori estimate step n}
Consider the operators \eqref{gain operator}, \eqref{loss operator} with the cross section \eqref{cross-section form}. Let $T>0$, $p,q>d$ and $\alpha,\beta>0$. Then, for all $k,n\in\N$, $\ell\in\{1,...,n\}$, $j_\ell\in\{1,...,k+\ell-1\}$ and $\pi_\ell\in\{+,-\}$, the following estimate holds
\begin{equation}\label{iterated a priori}
\begin{aligned}
\left\|\int_{[0,T]^n} T_{k}^{-t_{k+1}}\mathcal{C}_{j_1,k+1}^{\pi_1}T_{k+1}^{t_{k+1}-t_{k+2}}\mathcal{C}_{j_2,k+2}^{\pi_2}\dots T_{k+n-1}^{t_{k+n-1}-t_{k+n}}\mathcal{C}_{j_n,k+n}^{\pi_n}g^{(k+n)}(t_{k+n})\,dt_{k+n}\dots\,dt_{k+1}  \right\|_{k,p,q,\alpha,\beta} \\
\leq C_{p,q,\alpha,\beta}^n 
\left|\left|\left|T_{k+n}^{-(\cdot)}g^{(k+n)}(\cdot)\right|\right|\right|_{k+n,p,q,\alpha,\beta,T},
\end{aligned}
\end{equation}
where $C_{p,q,\alpha,\beta}$ is given by \eqref{C_p,q}.
\end{corollary}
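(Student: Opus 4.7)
The plan is to proceed by induction on the number of iterated collision operators $n$. The base case $n=1$ is exactly Proposition \ref{a-priori estimate step one}, so I would focus on the inductive step: assume the estimate holds at order $n-1$ for every starting particle count, and peel off the outermost time integration to reduce to the composition of the single-step a priori estimate with the inductive hypothesis applied one level deeper in the hierarchy.

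Concretely, let $I_n$ denote the left-hand side of \eqref{iterated a priori} and introduce the auxiliary function at level $k+1$,
\begin{equation*}
h^{(k+1)}(s) := \int_{[0,T]^{n-1}} T_{k+1}^{s-t_{k+2}} \mathcal{C}_{j_2,k+2}^{\pi_2} T_{k+2}^{t_{k+2}-t_{k+3}} \cdots \mathcal{C}_{j_n,k+n}^{\pi_n} g^{(k+n)}(t_{k+n})\,dt_{k+n}\cdots dt_{k+2}.
\end{equation*}
By Fubini, $I_n = \int_0^T T_k^{-s}\mathcal{C}_{j_1,k+1}^{\pi_1} h^{(k+1)}(s)\,ds$, so Proposition \ref{a-priori estimate step one} gives
\begin{equation*}
\|I_n\|_{k,p,q,\alpha,\beta} \leq C_{p,q,\alpha,\beta}\,|||T_{k+1}^{-(\cdot)} h^{(k+1)}(\cdot)|||_{k+1,p,q,\alpha,\beta,T}.
\end{equation*}
The decisive step is the telescoping of consecutive transports via the one-parameter group law $T_{k+1}^{a}T_{k+1}^{b} = T_{k+1}^{a+b}$: the operator $T_{k+1}^{-s}$ in front of $h^{(k+1)}(s)$ combines with $T_{k+1}^{s-t_{k+2}}$ inside the integral to give
\begin{equation*}
T_{k+1}^{-s} h^{(k+1)}(s) = \int_{[0,T]^{n-1}} T_{k+1}^{-t_{k+2}} \mathcal{C}_{j_2,k+2}^{\pi_2} T_{k+2}^{t_{k+2}-t_{k+3}} \cdots \mathcal{C}_{j_n,k+n}^{\pi_n} g^{(k+n)}(t_{k+n})\,dt_{k+n}\cdots dt_{k+2},
\end{equation*}
which is independent of $s$ and is exactly the integral on the left of \eqref{iterated a priori} at order $n-1$ starting from particle count $k+1$ in place of $k$. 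Applying the inductive hypothesis bounds its $\|\cdot\|_{k+1,p,q,\alpha,\beta}$-norm by $C_{p,q,\alpha,\beta}^{n-1}|||T_{k+n}^{-(\cdot)}g^{(k+n)}(\cdot)|||_{k+n,p,q,\alpha,\beta,T}$. The supremum over $s$ is trivial by $s$-independence, and combining with the single-step bound upgrades the prefactor to $C_{p,q,\alpha,\beta}^n$, closing the induction.

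The argument is essentially structural rather than analytic: no new estimate on the collision operator is needed beyond Proposition \ref{a-priori estimate step one}, and the main substantive observation is the telescoping of the transports. The point requiring the most care is therefore bookkeeping: verifying measurability and Fubini-type exchange of integration when defining $h^{(k+1)}(s)$, and then pulling $T_{k+1}^{-s}$ through the time integral. Both follow from the fact that Proposition \ref{a-priori estimate step one} applied pointwise in the remaining time variables already delivers the integrability and boundedness one needs, so I expect the proof to go through cleanly once the inductive scheme above is set up.
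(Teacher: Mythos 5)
Your proof is correct and takes essentially the same approach as the paper: peel off the outermost collision operator, apply Proposition \ref{a-priori estimate step one}, use the group law $T_{k+1}^{-s}T_{k+1}^{s-t_{k+2}}=T_{k+1}^{-t_{k+2}}$ to telescope the transports so that the $\sup$ over $s$ is trivial, and repeat. The paper merely phrases this as a direct iteration rather than a formal induction; the content is identical.
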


\begin{proof}
Let $k,n\in\N$, $\ell\in\{1,...,n\}$, $j_\ell\in\{1,...,k+\ell-1\}$ and $\pi_\ell\in\{+,-\}$. Applying  Proposition \ref{a-priori estimate step one} iteratively, we get
\begin{align}
&\left\|\int_{[0,T]^n} T_{k}^{-t_{k+1}}\mathcal{C}_{j_1,k+1}^{\pi_1}T_{k+1}^{t_{k+1}-t_{k+2}}\mathcal{C}_{j_2,k+2}^{\pi_2}\dots T_{k+n-1}^{t_{k+n-1}-t_{k+n}}\mathcal{C}_{j_n,k+n}^{\pi_n}g^{(k+n)}(t_{k+n})\,dt_{k+n}\dots\,dt_{k+1} \right\|_{k,p,q,\alpha,\beta} \nonumber \\
&= \left\|\int_0^T  T_{k}^{-t_{k+1}}\mathcal{C}_{j_1,k+1}^{\pi_1}
\int_{[0,T]^{n-1}}T_{k+1}^{t_{k+1}-t_{k+2}}\mathcal{C}_{j_2,k+2}^{\pi_2}\dots T_{k+n-1}^{t_{k+n-1}-t_{k+n}}\mathcal{C}_{j_n,k+n}^{\pi_n}g^{(k+n)}(t_{k+n})\,dt_{k+n}\dots\, dt_{k+1} \right\|_{k,p,q,\alpha,\beta} \nonumber\\
&:=\left\|\int_0^T T_{k}^{-t_{k+1}}\mathcal{C}_{j_1,k+1}^{\pi_1}G^{(k+1)}(t_{k+1})\,dt_{k+1}\right\|_{k,p,q,\alpha,\beta} \nonumber \\
&\leq C_{p,q,\alpha,\beta} \left|\left|\left|T_{k+1}^{-(\cdot)}G^{(k+1)}(\cdot)\right|\right|\right|_{k+1,p,q,\alpha,\beta,T}\nonumber \\
&=C_{p,q,\alpha,\beta} \sup_{t_{k+1} \in [0,T]}\Bigg\| T_{k+1}^{-t_{k+1}}\int_{[0,T]^{n-1}}T_{k+1}^{t_{k+1}-t_{k+2}}\mathcal{C}_{j_2,k+2}^{\pi_2}\dots\nonumber\\
&\hspace{5cm}\dots T_{k+n-1}^{t_{k+n-1}-t_{k+n}}\mathcal{C}_{j_n,k+n}^{\pi_n}g^{(k+n)}(t_{k+n})\,dt_{k+n}\dots\, dt_{k+2}\Bigg\|_{k+1,p,q,\alpha,\beta} \nonumber\\
&=C_{p,q,\alpha,\beta}\left\|\int_{[0,T]^{n-1}}T_{k+1}^{-t_{k+2}}\mathcal{C}_{j_2,k+2}^{\pi_2}\dots T_{k+n-1}^{t_{k+n-1}-t_{k+n}}\mathcal{C}_{j_n,k+n}^{\pi_n}g^{(k+n)}(t_{k+n})\,dt_{k+n}\dots\, dt_{k+2} \right\|_{k+1,p,q,\alpha,\beta}   \nonumber \\
& \leq \dots \nonumber \\
&=C_{p,q,\alpha,\beta}^2\left\|\int_{[0,T]^{n-2}}T_{k+2}^{-t_{k+3}}\mathcal{C}_{j_3,k+3}^{\pi_3}\dots T_{k+n-1}^{t_{k+n-1}-t_{k+n}}\mathcal{C}_{j_n,k+n}^{\pi_n}g^{(k+n)}(t_{k+n})\,dt_{k+n}\dots\, dt_{k+3} \right\|_{k+2,p,q,\alpha,\beta}   \nonumber \\
& \leq \dots \nonumber \\
&\leq C_{p,q,\alpha,\beta}^n
\left|\left|\left|T_{k+n}^{-(\cdot)}g^{(k+n)}(\cdot)\right|\right|\right|_{k+n,p,q,\alpha,\beta,T}\nonumber.
\end{align}

\end{proof}

\subsection{Reorganization of the integral via the board game}\label{subsec-combinatorics}

In this section we introduce a board game inspired by \cite{klma08}, which will enable us to reorganize the integral \eqref{iteration1} in order to manage the factorial number of terms in the Dyson's series expansion. Motivated by \cite{klma08}, we introduce the following notation for the integrand in \eqref{iteration1}.

\begin{definition}
Let $k,n \in \N$, $p,q>1$, $\alpha,\beta>0$ and  $\underline{t}_{n,k} = (t_{k+1}, \dots, t_{k+n}) \in [0,T]^{n}$. Define the operator  by the expression
\begin{equation}\label{J}
    J_{n,k}(\underline{t}_{n,k})f^{(k+n)} : = 
    T_k^{- t_{k+1}} \mathcal{C}^{k+1} T_{k+1}^{t_{k+1}- t_{k+2}} \mathcal{C}^{k+2} \cdots T^{t_{k+n-1} - t_{k+n}}_{k+n-1} \mathcal{C}^{k+n} f^{(k+n)}(t_{k+n}).
\end{equation}
\end{definition}
\begin{remark}
\begin{itemize}
\item[(i)] The operator $J_{n,k}(\underline{t}_{n,k})$  maps a $X^{k+n}_{p,q,\alpha,\beta,T}$ function to a function that is in  $X^k_{p,q,\alpha,\beta}$ for a.e. $\underline{t}_{n,k}$ thanks to the a priori estimate \eqref{iterated a priori}. 
\item[(ii)] Since $\mathcal{C}^{k+1}  = \sum_{j=1}^{k} \C_{j, k+1}$, we can write 
\begin{equation}\label{sum of Js}
    J_{n,k}(\underline{t}_{n,k}) = \sum_{\mu \in M_n} J_{n,k}(\underline{t}_{n,k}; \mu),
\end{equation}
where the sum is taken over the set of maps
\begin{equation}\label{M_n}
    M_n : = \left\{ \mu : \{k+1,\dots, k+n\} \rightarrow \{1,\dots, k+n-1\} \text{ such that } \,\, \mu(j) < j 
     \text{ for all } j\right\},
\end{equation}
and where
\begin{align}
  & J_{n,k}(\underline{t}_{n,k};\mu)f^{(k+n)} \nonumber\\
   &:= T_k^{- t_{k+1}} \mathcal{C}_{\mu(k+1),k+1} T_{k+1}^{t_{k+1}- t_{k+2}} \mathcal{C}_{\mu(k+2),k+2}  \cdots T^{t_{k+n-1} - t_{k+n}}_{k+n-1} \mathcal{C}_{\mu(k+n), k+n} f^{(k+n)}(t_{k+n}).
   \label{J mu}
\end{align}
\end{itemize}
\end{remark}

Next, we consider time integrals of the $J_{n,k}(\underline{t}_{n,k}; \mu)$ operators. These will be the quantities that are invariant under acceptable board game moves, as will be explained below.

\begin{definition}
\label{I integrals}
Fix $k, n \in \N$ and $\mu \in M_n$, where $M_n$ is defined in \eqref{M_n}. For each $\sigma \in S(\{k+1, \dots,k+ n\})$
define the operator  $\mathcal{I}_{n,k}(\mu,\sigma)$  by the expression 
\begin{equation}
    \mathcal{I}_{n,k}(\mu,\sigma) : = \int_{t \geq t_{\sigma(k+1)} \geq t_{\sigma(k+2)} \geq \dots \geq t_{\sigma(k+n)}\geq 0} J_{n,k}( \underline{t}_{n,k};\mu) dt_{k+n} dt_{k+n-1} \dots dt_{k+1}.
\end{equation}
\end{definition}
Note that it is equivalent to write 
\begin{equation}
    \mathcal{I}_{n,k}(\mu,\sigma)  = \int_{t\geq t_{k+1} \geq t_{k+2} \geq \dots \geq t_{k+n}\geq 0} J_{n,k}( \sigma^{-1}(\underline{t}_{n,k});\mu) dt_{k+n} dt_{k+n-1} \dots dt_{k+1},
\end{equation}
where  
\begin{equation}
    \sigma^{-1}(\underline{t}_{n,k}) : = ( t_{\sigma^{-1}(k+1)}, \dots , t_{\sigma^{-1}(k+n)}).
\end{equation}

\begin{remark}
The operator  $\mathcal{I}_{n,k}(\mu,\sigma)$ maps a $X^{k+n}_{p,q,\alpha,\beta,T}$ function to a function that is in  $X^k_{p,q,\alpha,\beta, T}$ thanks to the a priori estimate \eqref{iterated a priori}.

\end{remark}

The integral $\mathcal{I}_{n,k}(\mu,\sigma)$ is determined by $\mu$ and $\sigma$, and it can be visualized as a $(k+n-1) \times n$ matrix (see \eqref{matrixC}, whose columns are labelled $k+1$ to $k+n$ and whose rows are denoted $1$ to $k+n-1$. In each column  $k+j$, exactly one element is circled that corresponds to the operator  $C_{\mu(k+j), k+j}$  appearing in $J_{n,k}( \underline{t}_{n,k};\mu) $.

\begin{equation} \label{matrixC}
\left[ 
\begin{array}{ccccccc}  
   t_{\sigma^{-1}(k+1)}     &  t_{\sigma^{-1}(k+2)} & ...&  t_{\sigma^{-1}(k+j)} & ...&  t_{\sigma^{-1}(k+n)} & \vspace{8pt} \\
    \Circled{C_{1,k+1} }         & C_{1,k+2}          & ...& ... & ...& \Circled{C_{1, k+n}}  & \text{row } 1\\
    ...                                & \Circled{C_{2,k+2}}  & ...& ... & ...& ... & ... \\
    ...                                & ...                       & ...&  \Circled{C_{\mu(k+j),k+j}} & ...& ... &  \text{row } 2\\
    C_{k, k+1}                   & C_{k,k+2}          & ...& ... & ...& ... &... \\
    0                                 & C_{k+1,k+2}       & ...& ... & ...& ...& ... \\
    ...                                & 0                        & ...& ... & ...& ... & ...\\ 
    ...                                & ...                       & ...& ... & ...& ... & ...\\
    ...                                & ...                       & ...& 0  & ...& ...& ...\\
    0                                 & 0                        & ...& 0 & ...& C_{k+n-1,k+n} & \text{row } k+n-1\vspace{8pt}\\
    \text{col } k+1	     &   \text{col } k+2  & ...&  \text{col } k+j  & ...&  \text{col } k+n &
\end{array} 
\right] \,.
\end{equation}

\noindent Inspired by \cite{klma08}, we define a board game on such matrices with  a set of ``acceptable moves". Imagine a board with carved in names $C_{i,j}$ arranged as in \eqref{matrixC}. We also add a top row to keep track of times as in  \eqref{matrixC}. We associate each $(\mu, \sigma) \in G: = M_n \times S(\{k+1, \dots, k+n\})$ with a "state" of a game. The mapping $\mu$ determines which elements on the board are circled (recall that in each column exactly one element is circled), while the mapping $\sigma$ determines the order of times in the top row. For certain states, we  define "acceptable moves" of the game.  During the game only times and circles can  move positions. Names  $C_{i,j}$ are carved in and do not move.

 If $\mu(j+1) < \mu(j)$, then an ``acceptable move" consists of the following set of operations:
\begin{itemize}
\item[(1)] exchange positions of times in column $j$ and column $j+1$, and
\item[(2)] exchange positions of  circles in column $j$ and  column $j+1$, and
\item[(3)]  exchange  positions of  circles in row $j $ and row $j+1$ if such rows exist. If one of those rows does not exist, no changes are made at the level of rows.
\end{itemize}

Before providing rigorous definition of an acceptable move, let us demonstrate it on two examples. Let $k=2, n=4$ and consider the following state:

\begin{equation}
\left[ 
\begin{array}{ccccc}  
 t_5				& t_3				& t_4 			& t_6   \vspace{4pt} \\
 C_{1,3}			& C_{1,4}			& \Circled{C_{1,5}}	& C_{1,6} & \text{row } 1\\
\Circled{C_{2,3}}	& C_{2,4}			& C_{2,5}			& C_{2,6} & \text{row } 2\\
 0				& \Circled{C_{3,4}}	& C_{3,5}			& C_{3,6} & \text{row } 3\\
 0				& 0				& C_{4,5}			& \Circled{C_{4,6}} & \text{row } 4\\
 0				& 0				& 0				& C_{5,6} & \text{row } 5 \\
 \text{col } 3 & \text{col } 4 & \text{col } 5 & \text{col } 6
 \end{array} 
\right] \,.
\end{equation}
In this example, $1= \mu(5) < \mu(4) =3$. Thus, we exchange positions of times and circles in column 4 and column 5, and we also exchange positions of circles in row 4 and row 5. This results in the new state of the board game: 
\begin{equation}
\left[ 
\begin{array}{cccc}  
 t_5				& t_4				& t_3 			& t_6 \vspace{4pt} \\
 C_{1,3}			& \Circled{C_{1,4}}	&C_{1,5}			& C_{1,6} \\
\Circled{C_{2,3}}	& C_{2,4}			& C_{2,5}			& C_{2,6} \\
 0				& C_{3,4}			& \Circled{C_{3,5}}	& C_{3,6} \\
 0				& 0				& C_{4,5}			& C_{4,6} \\
 0				& 0				& 0				&  \Circled{C_{5,6}}
 \end{array} 
\right] \,.
\end{equation}

Here is one more example, with the same $k=2$ and $n=4$, but where there will be no row exchanges. Namely, consider the state
\begin{equation}
\left[ 
\begin{array}{ccccc}  
 t_5				& t_3				& t_4 			& t_6 \vspace{4pt} \\
 C_{1,3}			& C_{1,4}			& C_{1,5}	& C_{1,6} & \text{row } 1\\
\Circled{C_{2,3}}	& C_{2,4}			& C_{2,5}			& C_{2,6} & \text{row } 2 \\
 0				& \Circled{C_{3,4}}	& C_{3,5}			& \Circled{C_{3,6}} & \text{row } 3 \\
 0				& 0				& \Circled{C_{4,5}}			& C_{4,6} & \text{row } 4 \\
 0				& 0				& 0				& C_{5,6} & \text{row } 5 \\
  \text{col } 3 & \text{col } 4 & \text{col } 5 & \text{col } 6
 \end{array} 
\right] \,.
\end{equation}
Now, times and circles in columns 5 and 6 are being exchanged. But since there is no row 6, no action on rows is taken. So the resulting matrix after the acceptable move is 
\begin{equation}
\left[ 
\begin{array}{cccc}  
 t_5				& t_3				& t_6 			& t_4 \vspace{4pt} \\
 C_{1,3}			& C_{1,4}			& C_{1,5}	& C_{1,6} \\
\Circled{C_{2,3}}	& C_{2,4}			& C_{2,5}			& C_{2,6} \\
 0				& \Circled{C_{3,4}}	& \Circled{C_{3,5}}			& C_{3,6} \\
 0				& 0				& C_{4,5} 		& \Circled{C_{4,6}} \\
 0				& 0				& 0				& C_{5,6}
 \end{array} 
\right] \,.
\end{equation}

\bigskip

We now provide a precise definition of an   "acceptable moves" of the game.

\begin{definition}[Acceptable Moves]
Let $k, n \in \N$ with $n \ge 2$, and recall the definition of $M_n$ in \eqref{M_n}. Let $(\mu, \sigma) \in G  = M_n \times S(\{k+1, \dots, k+n\})$  be a state of the game such that $\mu(j+1) < \mu (j)$ for some $j \in \{k+1, \dots, k+n-1 \}$.  An acceptable move changes $(\mu,\sigma)$ to $(\mu',\sigma')$ by the rule 
\begin{equation}\label{Acceptable Move}
    \mu' = (j, j+1) \circ \mu \circ (j , j+1), \quad \text{and} \quad \sigma' = (j, j+1) \circ \sigma,
\end{equation}
where $(j,j+1)$ is the standard cycle notation for the symmetric group. 
\end{definition}

We next show that the integrals defined in Definition \ref{I integrals} are invariant under acceptable moves.

\begin{proposition}[Acceptable Move Invariance]
 Let $k, n \in \N$ with $n \ge 2$, and  recall the definition of $M_n$ in \eqref{M_n}. Let $(\mu, \sigma) \in G  = M_n \times S(\{k+1, \dots, k+n\})$  be a state of the game such that $\mu(j+1) < \mu (j)$ for some $j \in \{k+1, \dots, k+n-1 \}$.  Define  $(\mu',\sigma')$ by 
\begin{equation}
    \mu' = (j, j+1) \circ \mu \circ (j , j+1), \quad \text{and} \quad \sigma' = (j, j+1) \circ \sigma.
\end{equation}
Then $(\mu', \sigma') \in G$ and
\begin{equation}\label{invariance}
    \mathcal{I}_{n,k}(\mu,\sigma) = \mathcal{I}_{n,k}(\mu',\sigma').
\end{equation}
\end{proposition}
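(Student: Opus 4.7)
\emph{Proof plan.} First I would verify $(\mu',\sigma')\in G$. A case analysis on $i\in\{k+1,\dots,k+n\}$ shows $\mu'(i)<i$: for $i<j$ the transposition $(j,j+1)$ fixes both $i$ and $\mu(i)$ (since $\mu(i)<i<j$), so $\mu'(i)=\mu(i)<i$; for $i=j$ we have $\mu'(j)=(j,j+1)(\mu(j+1))=\mu(j+1)<j$, which is the only place where the hypothesis $\mu(j+1)<\mu(j)<j$ is crucial; for $i=j+1$ we get $\mu'(j+1)=\mu(j)<j+1$; and for $i>j+1$, the value $\mu'(i)=(j,j+1)(\mu(i))$ equals either $\mu(i)$ itself or the other element of $\{j,j+1\}$, hence $<i$. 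Thus $\mu'\in M_n$ and $\sigma'\in S(\{k+1,\dots,k+n\})$ is immediate.

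For the invariance \eqref{invariance}, the plan is to unfold $\mathcal{I}_{n,k}(\mu,\sigma)$ as a phase-space integral over the scattering data $(v_l,\sigma_l)_{l=k+1}^{k+n}$ and the times, with $f^{(k+n)}(t_{k+n},\cdot)$ evaluated along the pseudo-trajectory generated by the collision tree encoded in $\mu$ and the interleaved free transports $T_\ell^s$. I would then perform a change of variables swapping $t_j\leftrightarrow t_{j+1}$ in the time integration; this has unit Jacobian and carries the region $\{t_{\sigma(k+1)}\ge\cdots\ge t_{\sigma(k+n)}\}$ exactly into the region $\{t_{\sigma'(k+1)}\ge\cdots\ge t_{\sigma'(k+n)}\}$ defining $\mathcal{I}_{n,k}(\mu',\sigma')$. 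The key combinatorial observation is that, because $\mu(j+1)<\mu(j)<j$ forces the four particle indices $\mu(j+1),\mu(j),j,j+1$ to be pairwise distinct, the two consecutive collisions at chain positions $j$ and $j+1$ act on disjoint particle pairs; interchanging them and simultaneously relabeling particle indices $j\leftrightarrow j+1$ (which leaves $f^{(k+n)}$ invariant by the symmetry \eqref{symmetry assumption}) transforms the pseudo-trajectory for $(\mu,\sigma)$ into that for $(\mu',\sigma')$. The conjugation rule $\mu'=(j,j+1)\circ\mu\circ(j,j+1)$ records precisely how the relabeling propagates to operators $\mathcal{C}_{\mu(i),i}$ at chain positions $i>j+1$ whose first index lies in $\{j,j+1\}$: such first indices get toggled. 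Since the surviving indices $1,\dots,k$ are untouched by the relabeling, the integrands coincide and \eqref{invariance} follows.

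The principal technical obstacle lies in making the pseudo-trajectory matching rigorous: the interleaved transports $T_\ell^s$ depend on the labeling of particles currently alive in the chain, so one must verify that the $j\leftrightarrow j+1$ relabeling is compatible with every $T_\ell^s$ appearing in the composition. This reduces to the observation that once $\mathcal{C}_{\mu(l),l}$ is applied at chain position $l$, particle $l$ is integrated out and plays no role in later transports; hence the swap of labels $j$ and $j+1$ only affects the structure at positions $j$ and $j+1$ themselves (together with a harmless toggling at downstream positions encoded by $\mu'$), and the disjointness of the four indices $\mu(j+1),\mu(j),j,j+1$ guarantees that the swap is unambiguous and that the resulting pseudo-trajectory really is the one for $(\mu',\sigma')$.
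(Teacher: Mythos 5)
Your plan tracks the paper's proof closely: align the time simplices, relabel particles $j\leftrightarrow j+1$, propagate the relabeling through the chain (toggling the first indices of the downstream $\mathcal{C}$ operators in accordance with the conjugated $\mu'$), and absorb the relabeling at the end using the symmetry of $f^{(k+n)}$. The paper resolves the ``principal technical obstacle'' you flag by introducing a swap operator $S_{j,j+1}$, proving the conjugation rule $S_{j,j+1}\,\mathcal{C}_{(j,j+1)\circ\mu(\ell),\ell}=\mathcal{C}_{\mu(\ell),\ell}\,S_{j,j+1}$ for $\ell>j+1$, and establishing by direct expansion the local operator identity $T_{j-1}^{a-b}\mathcal{C}_{\alpha,j}T_j^{b-c}\mathcal{C}_{\beta,j+1}T_{j+1}^{c-d}=T_{j-1}^{a-c}\mathcal{C}_{\beta,j}T_j^{c-b}\mathcal{C}_{\alpha,j+1}T_{j+1}^{b-d}S_{j,j+1}$ for $\beta<\alpha<j$, which together deliver the rigorous pseudo-trajectory matching you describe.
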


\begin{proof}
 It is easy to verify that $(\mu',\sigma') \in G$, so it remains to prove \eqref{invariance}.
First we write $\mathcal{I}_{n,k}(\mu',\sigma')$, acting on a function $f^{(n+k)} \in X^{k+n}_{p,q,\alpha,\beta,T}$, explicitly: 
\begin{align}
    \mathcal{I} &(\mu',\sigma')f^{(n+k)}  = 
     \int_{t_k \geq t_{\sigma'(k+1)} \geq t_{\sigma'(k+2)} \geq \dots \geq t_{\sigma'(k+n)}\geq 0} J_{n,k}( \underline{t}_{n,k};\mu') f^{(n+k)}\, dt_{k+n}  \dots dt_{k+1}  \nonumber \\
   & = \int_{t_k \geq t_{k+1} \geq t_{k+2} \geq \dots \geq t_{k+n}\geq 0} J_{n,k}( \sigma'^{-1}(\underline{t}_{n,k});\mu') f^{(n+k)} \, dt_{k+n}  \dots dt_{k+1} \nonumber \\
  & = \int_{t_k \geq t_{k+1} \geq t_{k+2} \geq \dots \geq t_{k+n}\geq 0} 
  		 T_k^{- t_{\sigma'^{-1}(k+1)}} \mathcal{C}_{\mu'(k+1),k+1} \, T_{k+1}^{t_{\sigma'^{-1}(k+1)}- t_{\sigma'^{-1}(k+2)}} \mathcal{C}_{\mu'(k+2),k+2} \nonumber  \\
     & \qquad \qquad \dots T_{j-1}^{t_{\sigma'^{-1}(j-1)} - t_{\sigma'^{-1}(j)}}
    \mathcal{C}_{ \mu'(j) ,j } 
    \,\, T_{j}^{t_{\sigma'^{-1}(j)}- t_{\sigma'^{-1}(j+1)}} 
    \mathcal{C}_{ \mu'(j+1),j+1}  \nonumber 
    \,\, T_{j+1}^{t_{\sigma'^{-1}(j+1)} -t_{\sigma'^{-1}(j+2)}} 
  \dots \nonumber \\
  & \qquad \qquad \cdots\,  T^{t_{\sigma'^{-1}(k+n-1)} - t_{\sigma'^{-1}(k+n)}}_{k+n-1} \mathcal{C}_{\mu'(k+n), k+n}
  \, f^{(k+n)}(t_{\sigma'^{-1}(k+n)}) dt_{k+n}  \dots dt_{k+1}.
\end{align}
Since $\mu(j+1) < \mu(j) < j$, by the definition of $\mu'$ in \eqref{Acceptable Move}, we have that $\mu'(j) = \mu(j+1)$. Similarly, since $\mu(j)<j$, by \eqref{Acceptable Move} we have  $\mu'(j+1) =\mu(j)$. For any other $\ell \not \in \{j, j+1\}$, we have $\mu'(\ell) = (j,j+1) \circ \mu(\ell)$. In particular, for $\ell < j$, we have $\mu'(\ell) =  \mu(\ell)$ since $\mu(\ell) < \ell <j$.

\noindent On the other hand,  by \eqref{Acceptable Move}, we have $\sigma'^{-1} = \sigma^{-1} \circ (j, j+1)$, and thus $\sigma'^{-1} (j) = \sigma^{-1} (j+1)$, 
 $\sigma'^{-1} (j+1) = \sigma^{-1} (j)$ and  $\sigma'^{-1} (\ell) = \sigma^{-1} (\ell)$ for $\ell \ne j, j+1$.
Therefore, 
\begin{align}
 \mathcal{I} (\mu',\sigma') f^{(n+k)} &= 
  \int_{t_k \geq t_{k+1} \geq t_{k+2} \geq \dots \geq t_{k+n}\geq 0}
    \,\, T_k^{- t_{\sigma^{-1}(k+1)}} \mathcal{C}_{ \mu(k+1),k+1} 
    \, T_{k+1}^{t_{\sigma^{-1}(k+1)}- t_{\sigma^{-1}(k+2)}} \dots \nonumber \\
  & \quad \dots T_{j-1}^{t_{\sigma^{-1}(j-1)} - t_{\sigma^{-1}(j+1)}}
    \mathcal{C}_{ \mu(j+1) ,j } 
    \,\, T_{j}^{t_{\sigma^{-1}(j+1)}- t_{\sigma^{-1}(j)}} 
    \mathcal{C}_{ \mu(j),j+1}   \,\, T_{j+1}^{t_{\sigma^{-1}(j)} -t_{\sigma^{-1}(j+2)}} 
  \dots \nonumber \\
  & \quad\dots  T^{t_{\sigma^{-1}(k+n-1)} - t_{\sigma^{-1}(k+n)}}_{k+n-1} \mathcal{C}_{(j,j+1) \circ\mu(k+n), k+n}
  \, f^{(k+n)}(t_{\sigma^{-1}(k+n)}) 
  \, dt_{k+n}  \dots dt_{k+1}. \label{I'}
\end{align}

We recall that we need to show
$\mathcal{I}_{n,k}(\mu,\sigma) = \mathcal{I}_{n,k}(\mu',\sigma')$. So, we now expand $ \mathcal{I}_{n,k}(\mu,\sigma)$:
\begin{align}
    \mathcal{I} &(\mu,\sigma) f^{(n+k)} 
   = \int_{t_k \geq t_{k+1} \geq t_{k+2} \geq \dots \geq t_{k+n}\geq 0} 
  		 T_k^{- t_{\sigma^{-1}(k+1)}} \mathcal{C}_{\mu(k+1),k+1} \, T_{k+1}^{t_{\sigma^{-1}(k+1)}- t_{\sigma^{-1}(k+2)}} \dots \nonumber  \\
     & \qquad \qquad \dots T_{j-1}^{t_{\sigma^{-1}(j-1)} - t_{\sigma^{-1}(j)}}
    \mathcal{C}_{ \mu(j) ,j } 
    \,\, T_{j}^{t_{\sigma^{-1}(j)}- t_{\sigma^{-1}(j+1)}} 
    \mathcal{C}_{ \mu(j+1),j+1}  \nonumber 
    \,\, T_{j+1}^{t_{\sigma^{-1}(j+1)} -t_{\sigma^{-1}(j+2)}} 
  \dots \nonumber \\
  & \qquad \qquad \cdots\,  T^{t_{\sigma^{-1}(k+n-1)} - t_{\sigma^{-1}(k+n)}}_{k+n-1} \mathcal{C}_{\mu(k+n), k+n}
  \, f^{(k+n)}(t_{\sigma^{-1}(k+n)}) dt_{k+n}  \dots dt_{k+1}. \label{I}
\end{align}
Note that the terms appearing before $T_{j-1}$ in \eqref{I'} and \eqref{I} match. The terms appearing after the operator $T_{j+1}$ differ only in the first index of $\C$ operators ( $(j,j+1) \circ \mu(\cdot)$ vs. $\mu(\cdot)$). And finally, from $T_{j-1}$ to $T_{j+1}$ there are differences in the indices of $t$'s as well as the indices of $\C$'s.

In order to compare the operators appearing after $T_{j+1}$, we will need the following  lemma.

\begin{lemma}\label{SC lemma}
Let $S_{j, j+1}$ be an operator that exchanges $x$ variables in positions $j$ and $j+1$, and exchanges $v$ variables in positions $j$ and $j+1$. In other words, for $\ell > j$, we define
\begin{align}\label{S operator}
    \Big[S_{j,j+1} f^{(\ell)}\Big](X_\ell, V_\ell):= f^{(\ell)}(x_1, \dots, x_{j+1}, x_j, \dots, x_\ell; \,\, v_1, \dots, v_{j+1}, v_j, \dots, v_\ell).
\end{align}
Then for $\ell > j+1$ we have
\begin{align}\label{claim}
  S_{j, j+1} \mathcal{C}_{(j,j+1) \circ\mu(\ell), \ell}
   =  \mathcal{C}_{\mu(\ell), \ell}\, S_{j, j+1}.
\end{align}
\end{lemma}

\begin{proof}[Proof of Lemma \ref{SC lemma}] To prove this lemma, we consider two cases.

Case 1: assume that $\ell > j+1$ and  $\mu(\ell) \not \in \{j, j+1\}$.  Then  for any $f^{(\ell)} \in X^{\ell}_{p,q,\alpha,\beta,T}$ , we have
\begin{align*}
  \big[S_{j, j+1}& \mathcal{C}_{(j,j+1) \circ\mu(\ell), \ell} f^{(\ell)}\big] (X_{\ell-1}, V_{\ell-1})
  = \big[S_{j, j+1} \mathcal{C}_{\mu(\ell), \ell}f^{(\ell)}\big] (X_{\ell-1}, V_{\ell-1}) \\
 & = \big[ \mathcal{C}_{\mu(\ell), \ell}f^{(\ell)}\big] (x_1, \dots, x_{j+1},  x_{j}, \dots, x_{\ell-1}; \,\, v_1, \dots, v_{j+1}, v_{j}, \dots, v_{\ell-1} )\\
   & = \int_{\R^d} \int_{\S^{d-1}} B(\sigma, v_\ell - v_{\mu(\ell)}) \\
   &  \qquad  \Big(
   f^{(\ell)} (x_1, \dots, x_{j+1}, x_{j}, \dots, x_\ell; \,\, v_1, \dots, v_{j+1}, v_j, \dots, v_{\mu(\ell)}^*,  \dots, v_\ell^* )\\
   &\qquad 
    - f^{(\ell)} (x_1, \dots, x_{j+1},  x_{j}, \dots, x_\ell; \,\, v_1, \dots, v_{j+1}, v_j, \dots, v_{\mu(\ell)},  \dots, v_\ell )
   \Big) \, d\sigma dv_\ell\\
& =  \int_{\R^d} \int_{\S^{d-1}} B(\sigma, v_\ell - v_{\mu(\ell)}) \\
& \qquad \Big( [S_{j, j+1}f^{(\ell)}](x_1, \dots, x_{j},  x_{j+1}, \dots, x_\ell; \,\, v_1, \dots, v_{j}, v_{j+1}, \dots, v_{\mu(\ell)}^*, \dots,  \dots, v_\ell^* )\\
&\qquad 
    -[S_{j, j+1}f^{(\ell)}](x_1, \dots, x_{j},  x_{j+1}, \dots, x_\ell; \,\, v_1, \dots, v_{j}, v_{j+1}, \dots, v_{\mu(\ell)},  \dots, v_\ell )
   \Big) \, d\sigma dv_\ell\\
& = \big[ \mathcal{C}_{\mu(\ell), \ell} S_{j, j+1}f^{(\ell)}\big](X_{\ell-1}, V_{\ell-1}),
\end{align*}
which proves the claim in case 1.

Case 2: assume that $\ell > j+1$ and $\mu(\ell)  \in \{j, j+1\}$. For example, assume that $\mu(\ell) = j$. The other case ($\mu(\ell) = j+1$) is treated analogously. Then  for any $f^{(\ell)}\in X^{\ell}_{p,q,\alpha,\beta,T}$, we have
\begin{align*}
  \big[S_{j, j+1}& \mathcal{C}_{(j,j+1) \circ\mu(\ell), \ell} f^{(\ell)}\big] (X_{\ell-1}, V_{\ell-1})
  = \big[S_{j, j+1} \mathcal{C}_{j+1, \ell}f^{(\ell)}\big] (X_{\ell-1}, V_{\ell-1}) \\
 & = \big[ \mathcal{C}_{j+1, \ell}f^{(\ell)}\big] (x_1, \dots, x_{j+1},  x_{j}, \dots, x_{\ell-1}; \,\, v_1, \dots, v_{j+1},  v_{j}, \dots, v_{\ell-1} )\\
   & = \int_{\R^d} \int_{\S^{d-1}} B(\sigma, v_\ell - v_{j+1}) \\
   &  \qquad  \Big(
   f^{(\ell)} (x_1, \dots, x_{j+1},  x_{j}, \dots, x_\ell; \,\, v_1, \dots, v_{j+1},   v_{j}^*, \dots, v_\ell^* )\\
   &\qquad 
    - f^{(\ell)} (x_1, \dots, x_{j+1},  x_{j}, \dots, x_\ell; \,\, v_1, \dots, v_{j+1},   v_{j}, \dots, v_\ell )
   \Big) \, d\sigma dv_\ell\\
& =  \int_{\R^d} \int_{\S^{d-1}} B(\sigma, v_\ell - v_{j+1}) \\
& \qquad \Big( [S_{j, j+1}f^{(\ell)}](x_1, \dots, x_{j},  x_{j+1}, \dots, x_\ell; \,\, v_1, \dots, v_{j}^*,  v_{j+1}, \dots, v_\ell^* )\\
&\qquad 
    -[S_{j, j+1}f^{(\ell)}](x_1, \dots, x_{j},  x_{j+1}, \dots, x_\ell; \,\, v_1, \dots, v_{j}, v_{j+1},  \dots, v_\ell )
   \Big) \, d\sigma dv_\ell\\
& = \big[ \mathcal{C}_{j+1, \ell} S_{j, j+1}f^{(\ell)}\big](X_{\ell-1}, V_{\ell-1}),
\end{align*}
which proves the claim in case 2.
\end{proof}

In order to complete the proof of the invariance property \eqref{invariance}, it suffices to show
\begin{equation}
    T_{j-1}^{a - b} \mathcal{C}_{\alpha,j} T_{j}^{b-c} \mathcal{C}_{\beta,j+1} T_{j+1}^{ c- d} = T_{j-1}^{a - c} \mathcal{C}_{\beta, j} T_j^{c-b} \mathcal{C}_{\alpha, j+1} T_{j+1}^{ b -d} S_{j,j+1}.
\end{equation}
Indeed, once \eqref{three Ts} is established, it can be applied to $\mathcal{I}_{n,k}(\mu',\sigma')$ in \eqref{I'} to obtain
\begin{align*}
    \mathcal{I}_{n,k}(\mu',\sigma')f^{(n+k)}
    & =\int_{t_k \geq t_{k+1} \geq t_{k+2} \geq \dots \geq t_{k+n}\geq 0}
    \,\, T_k^{- t_{\sigma^{-1}(k+1)}} \mathcal{C}_{ \mu(k+1),k+1} 
    \, T_{k+1}^{t_{\sigma^{-1}(k+1)}- t_{\sigma^{-1}(k+2)}} \dots \nonumber \\
  & \quad \dots T_{j-1}^{t_{\sigma^{-1}(j-1)} - t_{\sigma^{-1}(j)}}
    \mathcal{C}_{ \mu(j) ,j } 
    \,\, T_{j}^{t_{\sigma^{-1}(j)}- t_{\sigma^{-1}(j+1)}} 
    \mathcal{C}_{ \mu(j+1),j+1}   \,\, T_{j+1}^{t_{\sigma^{-1}(j+1)} -t_{\sigma^{-1}(j+2)}} S_{j,j+1}
  \dots \nonumber \\
  & \quad\dots  T^{t_{\sigma^{-1}(k+n-1)} - t_{\sigma^{-1}(k+n)}}_{k+n-1} \mathcal{C}_{(j,j+1) \circ\mu(k+n), k+n}
  \, f^{(k+n)}(t_{\sigma^{-1}(k+n)}) 
  \, dt_{k+n}  \dots dt_{k+1}
\end{align*}
Then, applying identity \eqref{claim} iteratively, together with the fact that $S_{j,j+1}$ commutes with translation operators $T^\tau_\ell$ for $\ell >j+1$, and that  $f^{(k+n)}$ is symmetric i.e. $S_{j,j+1}f^{(k+n)} = f^{(k+n)}$ yields: 
\begin{align*}
    \mathcal{I}_{n,k}(\mu',\sigma')f^{(n+k)}
    & =\int_{t_k \geq t_{k+1} \geq t_{k+2} \geq \dots \geq t_{k+n}\geq 0}
    \,\, T_k^{- t_{\sigma^{-1}(k+1)}} \mathcal{C}_{ \mu(k+1),k+1} 
    \, T_{k+1}^{t_{\sigma^{-1}(k+1)}- t_{\sigma^{-1}(k+2)}} \dots \nonumber \\
  & \quad \dots T_{j-1}^{t_{\sigma^{-1}(j-1)} - t_{\sigma^{-1}(j)}}
    \mathcal{C}_{ \mu(j) ,j } 
    \,\, T_{j}^{t_{\sigma^{-1}(j)}- t_{\sigma^{-1}(j+1)}} 
    \mathcal{C}_{ \mu(j+1),j+1}   \,\, T_{j+1}^{t_{\sigma^{-1}(j+1)} -t_{\sigma^{-1}(j+2)}} 
  \dots \nonumber \\
  & \quad\dots  T^{t_{\sigma^{-1}(k+n-1)} - t_{\sigma^{-1}(k+n)}}_{k+n-1} \mathcal{C}_{\mu(k+n), k+n}
  \, f^{(k+n)}(t_{\sigma^{-1}(k+n)}) 
  \, dt_{k+n}  \dots dt_{k+1} \\
  &= \mathcal{I}_{n,k}(\mu, \sigma).
\end{align*}

It remains to prove identity \eqref{three Ts}, which is done in the following lemma.
\end{proof}

\begin{lemma}\label{main lemma}\label{three Ts lemma}
     Let $j>2$, and let $S_{j, j+1}$ be an operator that exchanges $x$ variables in positions $j$ and $j+1$, and exchanges $v$ variables in positions $j$ and $j+1$. Then any  $a,b,c,d \ge 0$ and any $\beta <\alpha<j$, we have
    \begin{equation}\label{three Ts}
    T_{j-1}^{a - b} \mathcal{C}_{\alpha,j} T_{j}^{b-c} \mathcal{C}_{\beta,j+1} T_{j+1}^{ c- d} = T_{j-1}^{a - c} \mathcal{C}_{\beta, j} T_j^{c-b} \mathcal{C}_{\alpha, j+1} T_{j+1}^{ b -d} S_{j,j+1}.
\end{equation}
\end{lemma}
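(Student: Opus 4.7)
The strategy is a direct verification: apply both sides to an arbitrary $f^{(j+1)}$, expand all operators explicitly, and check that the two resulting integrals agree after a suitable change of integration variables. I focus on the gain--gain case $\mathcal{C}_{\alpha,j}^+ \mathcal{C}_{\beta,j+1}^+$; the three other sign combinations follow identically, with the loss operator simply trivializing all velocity substitutions.

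On the LHS, I expand from the innermost operator outward: $T_{j+1}^{c-d}$ shifts $x_i \to x_i - (c-d) v_i$ in every slot; then $\mathcal{C}_{\beta,j+1}^+$ introduces $\int d\sigma_1\, dv_{j+1}$, identifies $x_{j+1}=x_\beta$, and replaces $v_\beta, v_{j+1}$ by the scattered velocities $v_\beta^{*}, v_{j+1}^{*}$ of the $(\beta, j+1)$ collision given by \eqref{*j}--\eqref{*k+1}; next $T_j^{b-c}$ shifts the surviving $j$ positions; then $\mathcal{C}_{\alpha,j}^+$ introduces $\int d\sigma_2\, dv_j$, identifies $x_j = x_\alpha$, and replaces $v_\alpha, v_j$ by their scattered counterparts $v_\alpha^{**}, v_j^{**}$; finally $T_{j-1}^{a-b}$ shifts. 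A slot-by-slot bookkeeping then produces the $f^{(j+1)}$-arguments: for indices $i \notin \{\alpha, \beta, j, j+1\}$, position $x_i - (a-d) v_i$ and velocity $v_i$; for slot $\alpha$, position $x_\alpha - (a-b) v_\alpha - (b-d) v_\alpha^{**}$ and velocity $v_\alpha^{**}$; for slot $\beta$, position $x_\beta - (a-c) v_\beta - (c-d) v_\beta^{*}$ and velocity $v_\beta^{*}$; for slot $j$, position $x_\alpha - (a-b) v_\alpha - (b-d) v_j^{**}$ and velocity $v_j^{**}$; for slot $j+1$, position $x_\beta - (a-c) v_\beta - (c-d) v_{j+1}^{*}$ and velocity $v_{j+1}^{*}$. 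The integration measure is $B(\sigma_2, v_j - v_\alpha) B(\sigma_1, v_{j+1} - v_\beta)\, dv_j\, dv_{j+1}\, d\sigma_1\, d\sigma_2$.

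Expanding the RHS analogously---starting with $S_{j,j+1}$, which interchanges the $j$- and $(j+1)$-slot arguments of $f^{(j+1)}$, then applying $T_{j+1}^{b-d}, \mathcal{C}_{\alpha, j+1}^+, T_j^{c-b}, \mathcal{C}_{\beta, j}^+, T_{j-1}^{a-c}$ outward---produces an integral of identical structure, but with the outer integration variables coupled to the opposite collisions: $v_{j+1}$ now parametrizes the $(\alpha, j+1)$ collision while $v_j$ parametrizes the $(\beta, j)$ collision, and the $B$-weights become $B(\sigma_2, v_{j+1} - v_\alpha) B(\sigma_1, v_j - v_\beta)$. The initial $S_{j,j+1}$ is precisely what is needed to reassign slot $j$ of $f^{(j+1)}$ to the $\alpha$-collision output and slot $j+1$ to the $\beta$-collision output, matching the LHS slot by slot. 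A change of variables $v_j \leftrightarrow v_{j+1}$ in the outer integral, a measure-preserving bijection of $(\mathbb{R}^d)^2$, then converts every scattering formula and every $B$-weight on the RHS verbatim into its LHS counterpart, proving equality of the two integrals.

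The main obstacle is the bookkeeping: I must simultaneously track, for each of the $j+1$ position and velocity arguments of $f^{(j+1)}$, the sequence of linear shifts from the transports and the substitutions from the collisions through all five operators. The key structural point is the necessity of the swap $S_{j,j+1}$: without it, the RHS would assign slot $j$ of $f^{(j+1)}$ to the $(\beta, j)$-collision output whereas the LHS assigns it to the $(\alpha, j)$-collision output (and similarly for slot $j+1$), so the two sides would genuinely differ; $S_{j,j+1}$ is precisely what reconciles them.
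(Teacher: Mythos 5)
Your proposal is correct and follows essentially the same strategy as the paper: expand both sides operator by operator, track the position and velocity arguments slot by slot, and observe that the change of variables $v_j \leftrightarrow v_{j+1}$ (together with a relabeling of the two scattering directions) turns the RHS integrand into the LHS integrand, with $S_{j,j+1}$ being exactly what reassigns slots $j$ and $j+1$ so the two sides agree. The only presentational differences are that the paper first reduces the identity to a one-parameter form with $\tau = b-c$ by applying transport operators from both sides (and implicitly uses that $S_{j,j+1}$ commutes with $T_{j+1}^{\tau}$), whereas you keep all of $a,b,c,d$ throughout; and the paper expands the full signed operator $\mathcal{C}_{\cdot,\cdot} = \mathcal{C}^+_{\cdot,\cdot}-\mathcal{C}^-_{\cdot,\cdot}$, producing four terms at once, whereas you split into sign cases and note the loss pieces are simpler — both logically equivalent once one checks the signs pair up across the substitution, which they do since the $\alpha$-collision on the RHS maps to the $\alpha$-collision on the LHS (and likewise for $\beta$) under the variable swap.
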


\begin{proof}[Proof of Lemma \ref{main lemma}]

Note that by applying the operator $T_{j-1}^{b-a}$ from the left,  the identity \eqref{three Ts} is equivalent to
\begin{align}\label{equiv-invar}
    \mathcal{C}_{\alpha,j} T_{j}^{b-c} \mathcal{C}_{\beta,j+1} T_{j+1}^{ c- d} = T_{j-1}^{b - c} \mathcal{C}_{\beta, j} T_j^{c-b} \mathcal{C}_{\alpha, j+1} T_{j+1}^{ b -d} S_{j,j+1}
\end{align}
Furthermore, note that $S_{j,j+1}$ commutes with $T_{j+1}^\tau$ for any $\tau \in \R$. Namely, for any $f^{(j+1)}\in X^{j+1}_{p,q,\alpha,\beta,T}$, we have
\begin{align*}
    \Big[T_{j+1}^\tau & S_{j,j+1} f^{(j+1)}\Big](X_{j+1}; V_{j+1}) 
    = \Big[S_{j,j+1} f^{(j+1)}\Big] (X_{j+1} - \tau V_{j+1}; V_{j+1}) \\
    & = f^{(j+1)} (X_{j-1} - \tau V_{j-1}, x_{j+1} -\tau v_{j+1}, x_{j} -\tau v_{j}; V_{j-1}, v_{j+1}, v_j)\\
    & = \Big[T_{j+1}^\tau f^{(j+1)}\Big](X_{j-1}, x_{j+1}, x_{j};  V_{j-1}, v_{j+1}, v_{j})
    = \Big[S_{j,j+1} T_{j+1}^\tau f^{(j+1)}\Big] (X_{j+1}; V_{j+1}).
\end{align*}
Due to this commutative property, \eqref{equiv-invar} is equivalent to 
\begin{align*}
    \mathcal{C}_{\alpha,j} T_{j}^{b-c} \mathcal{C}_{\beta,j+1} T_{j+1}^{ c- d} = T_{j-1}^{b - c} \mathcal{C}_{\beta, j} T_j^{c-b} \mathcal{C}_{\alpha, j+1} S_{j,j+1} T_{j+1}^{ b -d},
\end{align*}
and by applying the operator $T_{j+1}^{d-b}$ from the right and $T_{j-1}^{c-b}$ from the left, this is equivalent to
\begin{align}\label{equivalent lemma}
    T_{j-1}^{c - b}\mathcal{C}_{\alpha,j} T_{j}^{b-c} \mathcal{C}_{\beta,j+1} T_{j+1}^{ c- b} 
    =  \mathcal{C}_{\beta, j} T_j^{c-b} \mathcal{C}_{\alpha, j+1} S_{j,j+1}.
\end{align}
So, it remains to show that \eqref{equivalent lemma} holds. Since the quantity $b-c$ is showing up in each translation operator, let us introduce an abbreviated notation
\begin{align*}
    \tau := b-c.
\end{align*}
Then \eqref{equivalent lemma} reads
\begin{align} \label{equivalent lemma tau}
     T_{j-1}^{-\tau}\mathcal{C}_{\alpha,j} T_{j}^{\tau} \mathcal{C}_{\beta,j+1} T_{j+1}^{-\tau} 
    =  \mathcal{C}_{\beta, j} T_j^{-\tau} \mathcal{C}_{\alpha, j+1} S_{j,j+1}.
\end{align}
Each $\mathcal{C}$ operator in the line above will have a velocity corresponding to $v_{k+1}^*$ in the definition \eqref{gain operator}. In order to distinguish these velocities that correspond to different $\mathcal{C}$ operators, we will denote such velocity with a prime instead of a star for $\mathcal{C}_{\beta,j+1}$  (see \eqref{prime}); for $\mathcal{C}_{\beta,j}$ we will use a tilde instead of a star (see \eqref{tilde}), and for $\mathcal{C}_{\alpha,j+1}$ we will use a sharp instead of a star (see \eqref{sharp}). Prime, tilde and sharp notation will be used only within this lemma.

In order to prove \eqref{equivalent lemma tau}, we expand its left-hand side (LHS)  by first applying operators $T^{-\tau}_{j-1}$ and  $\mathcal{C}_{\alpha, j}$:
\begin{align*}
\text{LHS}& =   \Big[T^{-\tau}_{j-1}   \mathcal{C}_{\alpha, j} T^{\tau}_j \mathcal{C}_{\beta, j+1} T^{-\tau}_{j+1} \, f^{(j+1)}\Big](X_{j-1}, V_{j-1}) \\
  &  =   \Big[\mathcal{C}_{\alpha, j} T^{\tau}_j \mathcal{C}_{\beta, j+1} T^{-\tau}_{j+1}\, f^{(j+1)}\Big]( X_{j-1} + \tau V_{j-1}, \,  V_{j-1}) \\
  & =  \int_{\R^d} \int_{\S^{d-1}} B(\sigma, v_{j} - v_\alpha) 
    \Big( \Big[T^{\tau}_j \mathcal{C}_{\beta, j+1} T^{-\tau}_{j+1}\, f^{(j+1)}\Big]
        ( X_{j-1} + \tau  V_{j-1}, \,x_\alpha;
        \,\,  V_{j-1}^{*\alpha}, v^*_j) \\
    & \hspace{4cm} -  \Big[T^{\tau}_j \mathcal{C}_{\beta, j+1} T^{-\tau}_{j+1}\, f^{(j+1)}\Big]  ( X_{j-1} +\tau V_{j-1}, x_\alpha;\,\,  V_{j-1}, v_j) \Big) d \, \sigma dv_j,
  \end{align*}
where by \eqref{*j} and \eqref{*k+1},
\begin{align*}
& V_{j-1}^{*\alpha} = (v_1,\dots, v_\alpha^*, \dots, v_{j-1}), \\
& v_\alpha^*  = \frac{v_\alpha+v_{j}}{2}+\frac{|v_{j}-v_\alpha|}{2}\sigma, \\
 &  v_j^*=\frac{v_\alpha+v_{j}}{2}-\frac{|v_{j}-v_\alpha|}{2}\sigma.
\end{align*}

We then apply the operator $T_j^{\tau}$ to obtain
\begin{align*}
  \text{LHS}
& =  \int_{\R^d} \int_{\S^{d-1}} \, B(\sigma, v_{j} - v_\alpha) 
    \Big( \Big[\mathcal{C}_{\beta, j+1} T^{-\tau}_{j+1}\, f^{(j+1)}\Big]
        ( X_{j-1} +\tau (V_{j-1} -V_{j-1}^{*\alpha}), x_\alpha- \tau v_j^*; 
        \,\,  V_{j-1}^{*\alpha}, v^*_j) \\
    & \hspace{4cm} -  \Big[\mathcal{C}_{\beta, j+1} T^{-\tau}_{j+1}\, f^{(j+1)}\Big]  ( X_{j-1}, x_\alpha-\tau v_j; \,\,  V_{j-1}, v_j) \Big)  d\sigma dv_j.
\end{align*}
  Next, we apply the operator $\C_{\beta, j+1}$
 to obtain
 \begin{align*}
\text{LHS} & = \int_{\R^d} \int_{\S^{d-1}}\int_{\R^d} \int_{\S^{d-1}} \, d \sigma dv_j d \sigma' dv_{j+1} \, B(\sigma, v_{j} - v_\alpha) \,B(\sigma', v_{j+1} - v_\beta)\\
& \Big( \big[T^{-\tau}_{j+1}f^{(j+1)}\big]
    (X_{j-1} +\tau (V_{j-1} -V_{j-1}^{*\alpha}), 
    x_\alpha - \tau v_j^*, x_\beta; 
    \,\, V_{j-1}^{*\alpha, '\beta}, v^*_j, v_{j+1}') \\
& - \big[T^{-\tau}_{j+1}f^{(j+1)}\big]
    (X_{j-1} +\tau (V_{j-1} -V_{j-1}^{*\alpha}), 
    x_\alpha- \tau v_j^*, x_\beta;
    \,\, V_{j-1}^{*\alpha}, v^*_j, v_{j+1})\\
& - \big[T^{-\tau}_{j+1}f^{(j+1)}\big]
    (X_{j-1}, x_\alpha- \tau v_j, x_\beta; 
    \,\,  V^{'\beta}_{j-1}, v_j, v'_{j+1})\\
& + \big[T^{-\tau}_{j+1}f^{(j+1)}\big]
    (X_{j-1}, x_\alpha- \tau v_j, x_\beta; 
    \,\,  V_{j-1}, v_j, v_{j+1})
\Big),
 \end{align*}
 where, by \eqref{*j} and \eqref{*k+1}, since $\beta < \alpha < j$, we have
 \begin{align}\label{prime}
&  V_{j-1}^{*\alpha, '\beta} 
    : = (v_1, \dots, v'_\beta, \dots, v_\alpha^*, \dots, v_{j-1}), \nonumber\\
    & v'_\beta  = \frac{v_\beta +v_{j+1}}{2}+\frac{|v_{j+1}-v_\beta|}{2}\sigma', \\
 &  v'_{j+1}=\frac{v_\beta+v_{j+1}}{2}-\frac{|v_{j+1}-v_\beta|}{2}\sigma'. \nonumber
 \end{align}
Finally, we apply the operator $T_{j+1}^{-\tau}$ to obtain
 \begin{align*}
\text{LHS} & = \int_{\R^d} \int_{\S^{d-1}}\int_{\R^d} \int_{\S^{d-1}} \, d \sigma dv_j d \sigma' dv_{j+1} \, B(\sigma, v_{j} - v_\alpha) \,B(\sigma', v_{j+1} - v_\beta) \nonumber\\
& \Big( f^{(j+1)}
    (X_{j-1} +\tau (V_{j-1} -V_{j-1}^{*\alpha} + V_{j-1}^{*\alpha, '\beta}), 
    x_\alpha, x_\beta + \tau v_{j+1}'; 
    \,\, V_{j-1}^{*\alpha, '\beta}, v^*_j, v_{j+1}') \nonumber \\
& - f^{(j+1)}
    (X_{j-1} +\tau V_{j-1}, 
    x_\alpha, x_\beta +\tau v_{j+1};
    \,\, V_{j-1}^{*\alpha}, v^*_j, v_{j+1}) \nonumber\\
& - f^{(j+1)}
    (X_{j-1} + \tau  V^{'\beta}_{j-1}, x_\alpha, x_\beta +\tau v'_{j+1}; 
    \,\,  V^{'\beta}_{j-1}, v_j, v'_{j+1}) \nonumber \\
& + f^{(j+1)}
    (X_{j-1} +\tau V_{j-1}, x_\alpha, x_\beta + \tau v_{j+1}; 
    \,\,  V_{j-1}, v_j, v_{j+1})
\Big).
 \end{align*}
 Note that
 \begin{align*}
     V_{j-1} -V_{j-1}^{*\alpha} + V_{j-1}^{*\alpha, '\beta} = V_{j-1}^{'\beta}, 
 \end{align*}
and therefore,
 \begin{align}
\text{LHS} & = \int_{\R^d} \int_{\S^{d-1}}\int_{\R^d} \int_{\S^{d-1}} \, d \sigma dv_j d \sigma' dv_{j+1} \, B(\sigma, v_{j} - v_\alpha) \,B(\sigma', v_{j+1} - v_\beta) \nonumber\\
& \Big( f^{(j+1)}
    (X_{j-1} +\tau V_{j-1}^{'\beta}, 
    x_\alpha, x_\beta + \tau v_{j+1}'; 
    \,\, V_{j-1}^{*\alpha, '\beta}, v^*_j, v_{j+1}') \nonumber \\
& - f^{(j+1)}
    (X_{j-1} +\tau V_{j-1}, 
    x_\alpha, x_\beta +\tau v_{j+1};
    \,\, V_{j-1}^{*\alpha}, v^*_j, v_{j+1}) \nonumber\\
& - f^{(j+1)}
    (X_{j-1} + \tau  V^{'\beta}_{j-1}, x_\alpha, x_\beta +\tau v'_{j+1}; 
    \,\,  V^{'\beta}_{j-1}, v_j, v'_{j+1}) \nonumber \\
& + f^{(j+1)}
    (X_{j-1} +\tau V_{j-1}, x_\alpha, x_\beta + \tau v_{j+1}; 
    \,\,  V_{j-1}, v_j, v_{j+1})
\Big). \label{LHS}
 \end{align}

Next, we expand the right-hand side (RHS) of \eqref{equivalent lemma tau} by first applying the operator $ \mathcal{C}_{\beta, j}$:
\begin{align*}
 \text{RHS} 
 & = \Big[\mathcal{C}_{\beta, j} T_j^{-\tau} \mathcal{C}_{\alpha, j+1} S_{j,j+1} f^{(j+1)}\Big](X_{j-1}, V_{j-1}) \\
 & = \int_{\R^d} \int_{\S^{d-1}} \,B(\tild{\sigma}, v_j - v_{\beta})
 \Bigg(
        \Big[T_j^{-\tau} \mathcal{C}_{\alpha, j+1} S_{j,j+1} f^{(j+1)}\Big] 
        (X_{j-1}, x_\beta; \, V_{j-1}^{\,\,\tild{} \,\beta}, \tild v_j) \\
& \hspace{3.5cm}
        - \Big[T_j^{-\tau} \mathcal{C}_{\alpha, j+1} S_{j,j+1} f^{(j+1)}\Big] 
        (X_{j-1}, x_\beta; \, V_{j-1}, v_j)
    \Bigg)  d\tild{\sigma} dv_j \, ,
\end{align*}
where, according to \eqref{*j} and \eqref{*k+1}, we have
\begin{align}
& V_{j-1}^{\,\, \tild{} \, \beta} = (v_1, \dots, \tild v_\beta, \dots, v_{j-1}), \nonumber \\
& \tild v_\beta =  \frac{v_\beta +v_{j}}{2}+\frac{|v_{j}-v_\beta|}{2}\tild\sigma, \label{tilde}\\
& \tild v_j = \frac{v_\beta +v_{j}}{2}-\frac{|v_{j}-v_\beta|}{2}\tild\sigma. \nonumber
\end{align}
Then we apply operator $T_j^{-\tau}$ 
\begin{align*}
 \text{RHS} 
 & = \int_{\R^d} \int_{\S^{d-1}} \, d\tild \sigma dv_j \,B(\tild \sigma, v_j - v_{\beta})\\
 & \hspace{2cm} 
    \times \Bigg(
    \Big[\mathcal{C}_{\alpha, j+1} S_{j,j+1} f^{(j+1)}\Big] 
        (X_{j-1}+\tau V_{j-1}^{\,\, \tild{} \,\beta} , x_\beta +\tau \tild v_j; \,\, V_{j-1}^{\,\, \tild{} \,\beta}, \tild v_j) \\
& \hspace{3cm}
    - \Big[\mathcal{C}_{\alpha, j+1} S_{j,j+1} f^{(j+1)}\Big] 
        (X_{j-1}+\tau V_{j-1} , x_\beta + \tau v_j; \,\, V_{j-1}, v_j)
    \Bigg)   ,
\end{align*}
and then operator $ \mathcal{C}_{\alpha, j+1}$
\begin{align*}
 \text{RHS} 
 & = 
 \int_{\R^d} \int_{\S^{d-1}} \int_{\R^d} \int_{\S^{d-1}} \, d\tild \sigma dv_j d \sigma^\# dv_{j+1}\,B(\tild \sigma, v_j - v_{\beta}) B(\sigma^\#, v_{j+1} - v_{\alpha})\\
 & \hspace{2cm} \times
 \Big(
     \Big[S_{j,j+1} f^{(j+1)}\Big]
     (X_{j-1}+\tau V_{j-1}^{\,\, \tild{} \, \beta} , x_\beta +\tau \tild v_j, x_\alpha; \,\, V_{j-1}^{\,\, \tild{} \, \beta, \,\# \,\alpha}, \tild v_j,  v^\#_{j+1})\\
 & \hspace{2.5cm} 
   -  \Big[S_{j,j+1} f^{(j+1)}\Big] (X_{j-1}+\tau V_{j-1}^{\,\,\tild{} \,\beta} , x_\beta +\tau \tild v_j, x_\alpha; \,\, V_{j-1}^{\,\, \tild{} \,\beta}, \tild v_j, v_{j+1})\\
 & \hspace{2.5cm} 
   -  \Big[S_{j,j+1} f^{(j+1)}\Big] (X_{j-1}+\tau V_{j-1} , x_\beta + \tau v_j, x_\alpha; \,\, V^{\# \alpha}_{j-1}, v_j, v^\#_{j+1})\\
& \hspace{2.5cm} 
   +  \Big[S_{j,j+1} f^{(j+1)}\Big] (X_{j-1}+\tau V_{j-1} , x_\beta + \tau v_j, x_\alpha; \,\, V_{j-1}, v_j, v_{j+1})
 \Big),
 \end{align*}
where
\begin{align}
& V_{j-1}^{\,\, \tild{} \, \beta, \,\# \,\alpha} =  (v_1, \dots, \tild v_\beta, \dots, v^\#_\alpha, \dots, v_{j-1}), \nonumber\\
& V^{\#\alpha}_{j-1} = (v_1, \dots, v_\beta, \dots,  v^\#_\alpha, \dots, v_{j-1}), \label{sharp}\\
& v^\#_\alpha = \frac{v_\alpha +v_{j+1}}{2}+\frac{|v_{j+1}-v_\alpha|}{2}\sigma^\#, \nonumber\\
& v^\#_{j+1} = \frac{v_\alpha +v_{j+1}}{2}-\frac{|v_{j+1}-v_\alpha|}{2}\sigma^\#. \nonumber
\end{align}
Next we apply operator $S_{j, j+1}$ to obtain:
 \begin{align*}
 \text{RHS} 
 & = 
 \int_{\R^d} \int_{\S^{d-1}} \int_{\R^d} \int_{\S^{d-1}} \, d\tild \sigma dv_j d\sigma^\# dv_{j+1}\,B(\tild \sigma, v_j - v_{\beta}) B(\sigma^\#, v_{j+1} - v_{\alpha})\\
 & \hspace{2cm} \times
 \Big(
      f^{(j+1)}
    (X_{j-1}+\tau V_{j-1}^{\,\, \tild{} \, \beta}, x_\alpha, x_\beta +\tau \tild v_j; \,\, V_{j-1}^{\,\, \tild{} \, \beta, \,\# \,\alpha} ,  v^\#_{j+1}, \tild v_j)\\
 & \hspace{2.5cm} 
   -  f^{(j+1)} (X_{j-1}+\tau V_{j-1}^{\,\,\tild{} \,\beta}, x_\alpha , x_\beta +\tau \tild v_j; \,\, V_{j-1}^{\,\, \tild{} \,\beta} , v_{j+1}, \tild v_j)\\
 & \hspace{2.5cm} 
   -   f^{(j+1)} (X_{j-1}+\tau V_{j-1} , x_\alpha , x_\beta + \tau v_j; \,\, V^{\# \alpha}_{j-1} , v^\#_{j+1}, v_j) \\
& \hspace{2.5cm} 
   +  f^{(j+1)} (X_{j-1}+\tau V_{j-1} , x_\alpha , x_\beta + \tau v_j; \,\, V_{j-1}, v_{j+1}, v_j)
 \Big).
 \end{align*}
Finally, we apply changes of variables $v_j \leftrightarrow v_{j+1}$, $\tild \sigma \mapsto \sigma'$ and $\sigma^\#\mapsto \sigma$, and note that under such changes of variables we have
\begin{align*}
    & \tild v_\beta \mapsto  \frac{v_\beta +v_{j+1}}{2}+\frac{|v_{j+1}-v_\beta|}{2}\sigma' = v'_\beta,\\
    & \tild v_j \mapsto \frac{v_\beta +v_{j+1}}{2} - \frac{|v_{j+1}-v_\beta|}{2}\sigma' = v'_{j+1},\\
    & v^\#_\alpha \mapsto  \frac{v_\alpha +v_{j}}{2}+\frac{|v_{j}-v_\alpha|}{2}\sigma = v^*_\alpha,\\
    & v^\#_{j+1} \mapsto \frac{v_\alpha +v_{j}}{2}-\frac{|v_{j}-v_\alpha|}{2}\sigma = v_j^*.
\end{align*}
Therefore,
\begin{align*}
 \text{RHS} 
 & = 
 \int_{\R^d} \int_{\S^{d-1}} \int_{\R^d} \int_{\S^{d-1}} \,  d\sigma' dv_{j+1} d\sigma dv_{j}\,B(\sigma', v_{j+1} - v_{\beta}) B(\sigma, v_{j} - v_{\alpha})\\
 & \hspace{2cm} \times
 \Big(
      f^{(j+1)}
    (X_{j-1}+\tau V_{j-1}^{'  \beta}, x_\alpha, x_\beta +\tau  v'_{j+1}
        ; \,\, V_{j-1}^{'  \beta, \,*\alpha} ,  v^*_{j},  v'_{j+1})\\
 & \hspace{2.5cm} 
   -  f^{(j+1)} (X_{j-1}+\tau V_{j-1}^{' \,\beta}, x_\alpha , x_\beta +\tau  v'_{j+1}
    ; \,\, V_{j-1}^{' \,\beta} , v_{j}, v'_{j+1})\\
 & \hspace{2.5cm} 
   -   f^{(j+1)} (X_{j-1}+\tau V_{j-1} , x_\alpha , x_\beta + \tau v_j
        ; \,\, V^{* \alpha}_{j-1} , v^*_{j}, v_{j+1}) \\
& \hspace{2.5cm} 
   +  f^{(j+1)} (X_{j-1}+\tau V_{j-1} , x_\alpha , x_\beta + \tau v_j
        ; \,\, V_{j-1}, v_{j}, v_{j+1})
 \Big).
 \end{align*}

 Compare that with the formula for $\text{LHS}$ in \eqref{LHS}
 \begin{align*}
\text{LHS} & = \int_{\R^d} \int_{\S^{d-1}}\int_{\R^d} \int_{\S^{d-1}} \, d \sigma dv_j d \sigma' dv_{j+1} \, B(\sigma, v_{j} - v_\alpha) \,B(\sigma', v_{j+1} - v_\beta) \nonumber\\
& \Big( f^{(j+1)}
    (X_{j-1} +\tau V_{j-1}^{'\beta}, 
    x_\alpha, x_\beta + \tau v_{j+1}'; 
    \,\, V_{j-1}^{*\alpha, '\beta}, v^*_j, v_{j+1}') \nonumber \\
& - f^{(j+1)}
    (X_{j-1} +\tau V_{j-1}, 
    x_\alpha, x_\beta +\tau v_{j+1};
    \,\, V_{j-1}^{*\alpha}, v^*_j, v_{j+1}) \nonumber\\
& - f^{(j+1)}
    (X_{j-1} + \tau  V^{'\beta}_{j-1}, x_\alpha, x_\beta +\tau v'_{j+1}; 
    \,\,  V^{'\beta}_{j-1}, v_j, v'_{j+1}) \nonumber \\
& + f^{(j+1)}
    (X_{j-1} +\tau V_{j-1}, x_\alpha, x_\beta + \tau v_{j+1}; 
    \,\,  V_{j-1}, v_j, v_{j+1})
\Big)
 \end{align*}
to conclude that the left-hand side and the right-hand side of \eqref{equivalent lemma tau} are indeed equal. This concludes the proof of Lemma \ref{main lemma}. 
\end{proof}

Motivated by \cite{klma08}, we give the following definition. 
\begin{definition}[Special Upper Echelon Form]
We say that $\mu \in M_n$ is in special upper echelon form if for every $j\in \{k+1, \dots, k+n\}$ we have $\mu(j) \leq \mu(j+1)$. We will denote $\mathcal{M}_n$ to be the set of all special upper echelon forms in $M_n$.
\end{definition}

\begin{proposition}[Combinatorics, \protect{\cite[Lemma 3.2, Lemma 3.3]{klma08}}] \label{proposition-combinatorics}
For any $n \in \N$, the following statements are true:
\begin{itemize}
    \item[(i)] Any $\mu \in M_n$ can be changed to special upper echelon form via a finite sequence of acceptable moves.
    \item[(ii)] The following upper bound holds: $\# \mathcal{M}_n \leq 2^{k+n}$. 
\end{itemize}
\end{proposition}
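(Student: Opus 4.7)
\emph{Part (i) — Termination via a lexicographic monovariant.}
The plan is to exhibit a strictly decreasing monovariant for acceptable moves on the finite set $M_n$; termination then follows automatically, and a terminal state is tautologically in special upper echelon form. I would equip $M_n$ with the lexicographic order on the tuple $(\mu(k+1),\mu(k+2),\ldots,\mu(k+n))$. The key observation is that an acceptable move at column $j$ alters $\mu$ only at coordinates $\geq j$: for $\ell<j$ one has $\mu(\ell)<\ell<j$, so $\mu(\ell)\notin\{j,j+1\}$ and conjugation by the transposition $(j,j+1)$ fixes the value, giving $\mu'(\ell)=\mu(\ell)$. At coordinate $j$ the new value is $\mu'(j)=\mu(j+1)<\mu(j)$, by the very hypothesis defining an acceptable move. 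Hence $\mu'$ is strictly lexicographically smaller than $\mu$; finiteness of $M_n$ forces the process to terminate, and at any terminal $\mu$ no $j$ satisfies $\mu(j+1)<\mu(j)$, which is precisely the defining condition $\mu\in\mathcal{M}_n$.

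\emph{Part (ii) — Bound $|\mathcal{M}_n|\leq 2^{k+n}$ via a binary encoding.} My plan is to construct an injection $\mathcal{M}_n\hookrightarrow\{0,1\}^{k+n}$. A special upper echelon form $\mu$ is equivalent to a non-decreasing sequence $1\leq\mu(k+1)\leq\cdots\leq\mu(k+n)$ with the additional strict constraint $\mu(k+i)\leq k+i-1$. Such a sequence is determined by the monotone lattice path that alternates horizontal steps (advancing the index $i\mapsto i+1$) with vertical steps (recording increases in the value $\mu(k+i)$). I would encode this path as a binary word by assigning $0$ to each horizontal step and $1$ to each vertical step, and use the strict inequality $\mu(k+i)<k+i$ together with the non-decreasing condition to cap the total length of the word at $k+n$. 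Injectivity is immediate, since the word uniquely reconstructs the sequence $\mu$, so $|\mathcal{M}_n|\leq|\{0,1\}^{k+n}|=2^{k+n}$.

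\emph{Expected obstacle.} Part (i) is a soft monovariant argument that I expect to go through with no real difficulty once the lexicographic ordering is introduced and the commutativity $\mu(\ell)=\mu'(\ell)$ for $\ell<j$ is noted. The genuinely subtle step is (ii): a naive lattice-path accounting over $\{1,\ldots,k+n-1\}\times\{1,\ldots,n\}$ produces binary words of length $k+2n-2$ and hence only the loose bound $2^{k+2n-2}$, which is too large. To achieve exactly $k+n$, one must exploit both the non-decreasing property and the constraint $\mu(j)<j$ simultaneously — for instance by recording only the columns at which $\mu$ strictly increases together with the initial value $\mu(k+1)$, or by dualizing to a complement encoding — so that the encoding fits into exactly $k+n$ binary choices. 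Making the injection both well-defined and provably injective under this tighter length budget is where I expect the bulk of the work to lie.
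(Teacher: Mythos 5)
Your part (i) is correct and is essentially the Klainerman--Machedon argument: since $\mu(\ell)<\ell<j$ for $\ell<j$, the conjugating transposition $(j,j+1)$ fixes both $\ell$ and $\mu(\ell)$, so $\mu'(\ell)=\mu(\ell)$; and $\mu'(j)=\mu(j+1)<\mu(j)$ by hypothesis. The tuple $(\mu(k+1),\ldots,\mu(k+n))$ therefore strictly decreases lexicographically under every acceptable move, and finiteness of $M_n$ forces termination at a non-decreasing $\mu$, i.e.\ at a special upper echelon form.

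Your plan for part (ii), however, cannot succeed, and the obstruction is not a fixable encoding detail: the injection $\mathcal{M}_n\hookrightarrow\{0,1\}^{k+n}$ you seek does not exist. Taking $k=1$, the set $\mathcal{M}_n$ is exactly the set of non-decreasing sequences $1=a_1\leq a_2\leq\cdots\leq a_n$ with $a_j\leq j$, whose cardinality is the Catalan number $C_n$; since $C_6=132>128=2^{1+6}$, one has $\#\mathcal{M}_n>2^{k+n}$ already at $(k,n)=(1,6)$, and as $C_n\sim 4^n n^{-3/2}$ the discrepancy only widens with $n$. So the bound stated in Proposition~\ref{proposition-combinatorics}(ii) is not literally correct, and the ``improvement'' you were trying to manufacture would establish a false statement --- your instinct that this step was subtle was sound, but for a different reason than you anticipated. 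The bound you dismiss as ``too loose,'' namely $\#\mathcal{M}_n\leq\binom{k+2n-2}{n}\leq 2^{k+2n-2}$ obtained from the strictly-increasing reparametrization $(a_1,\ldots,a_n)\mapsto\{a_1,a_2+1,\ldots,a_n+n-1\}\subseteq\{1,\ldots,k+2n-2\}$, is in fact of the correct order and is what the Klainerman--Machedon combinatorial lemma genuinely delivers. Note that the paper does not prove this proposition itself (it cites~\cite{klma08}), and the slip is harmless to the overall argument --- it only tightens a constant: using $2^{k+2n-2}$ in place of $2^{k+n}$ in the final display of Section~3 changes the requirement $e^\mu>4C_{p,q,\alpha,\beta}$ in Theorem~\ref{Uniqueness theorem} to $e^\mu>8C_{p,q,\alpha,\beta}$, which Theorem~\ref{existence theorem BH} already assumes in any case.
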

The proof of both of these facts is the content of Lemmas 3.2, 3.3 in \cite{klma08}.

In each equivalence class, we can now reorganize the sum in the decomposition of $f^{(k)}$. 
\begin{proposition}[Sum Over Special Upper Echelon Forms]\label{sum over echelon forms}
Let $\mu_s \in \mathcal{M}_n$ be a special upper echelon form, and write $\mu \sim \mu_s$ if $\mu$ can be reduced to $\mu_s$ in finitely many acceptable moves. There exists a set $D \subset [0,t_k]^{n}$ such that 
\begin{equation}
    \sum_{\mu \sim \mu_s} \int_{t_{k} \geq \dots \geq t_{k+n} \geq 0} J(\underline{t}_{n,k} ; \mu) \, dt_{k+n} \dots  d t_{k+1} 
        = \int_{D} J(\underline{t}_{n,k}; \mu_s) \, dt_{k+n} \dots d t_{k+1},
\end{equation}
where the sum occurs over $\mu \in M_n$ such that $\mu$ can be changed to $\mu_s$ via a sequence of acceptable moves.
\end{proposition}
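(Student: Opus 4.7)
The plan is to use the Invariance Proposition iteratively to reduce each integral in the sum to an integral involving only the special upper echelon form $\mu_s$, and then collect all these integrals by partitioning a subset of $[0,t_k]^n$ into simplicial pieces. More precisely, fix $\mu$ with $\mu\sim\mu_s$. By definition there is a finite sequence of acceptable moves transforming $\mu$ into $\mu_s$. Starting from the state $(\mu,\mathrm{id})$ and applying this sequence, each move replaces the current state $(\nu,\tau)$ by $((j,j+1)\circ\nu\circ(j,j+1),\,(j,j+1)\circ\tau)$ for the appropriate adjacent transposition. Thus the procedure ends at a state $(\mu_s,\sigma_\mu)$, where $\sigma_\mu\in S(\{k+1,\dots,k+n\})$ is the composition of all the adjacent transpositions performed.

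Applying the Acceptable Move Invariance (one move at a time), we obtain
\begin{equation*}
\mathcal{I}_{n,k}(\mu,\mathrm{id}) \;=\; \mathcal{I}_{n,k}(\mu_s,\sigma_\mu),
\end{equation*}
which, using the two equivalent formulas for $\mathcal{I}_{n,k}$ given right after Definition \ref{I integrals}, reads
\begin{equation*}
\int_{t_k\ge t_{k+1}\ge\cdots\ge t_{k+n}\ge 0} J_{n,k}(\underline{t}_{n,k};\mu)\,dt_{k+n}\cdots dt_{k+1}
\;=\;\int_{R_{\sigma_\mu}} J_{n,k}(\underline{t}_{n,k};\mu_s)\,dt_{k+n}\cdots dt_{k+1},
\end{equation*}
where $R_\sigma := \{\underline{t}_{n,k}\in[0,t_k]^n : t_{\sigma(k+1)}\ge t_{\sigma(k+2)}\ge\cdots\ge t_{\sigma(k+n)}\}$. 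Summing over $\mu\sim\mu_s$ and defining
\begin{equation*}
D \;:=\; \bigcup_{\mu\sim\mu_s} R_{\sigma_\mu} \;\subset\; [0,t_k]^n,
\end{equation*}
the desired identity will follow provided the simplices $\{R_{\sigma_\mu}\}_{\mu\sim\mu_s}$ are pairwise disjoint up to a measure-zero set. Since the family $\{R_\sigma\}_{\sigma\in S(\{k+1,\dots,k+n\})}$ already tiles $[0,t_k]^n$ up to null sets (each simplex corresponds to an ordering of the $n$ time variables), this reduces to showing that the assignment $\mu\mapsto\sigma_\mu$ is injective on the equivalence class of $\mu_s$.

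The main obstacle is therefore verifying (i) that $\sigma_\mu$ is well-defined, i.e.\ independent of the particular sequence of acceptable moves chosen to reduce $\mu$ to $\mu_s$, and (ii) that $\mu\mapsto \sigma_\mu$ is one-to-one. Both statements are of a purely combinatorial nature and are the content of the Klainerman--Machedon reduction (Lemmas 3.2--3.3 of \cite{klma08}) adapted to the present board: well-definedness is obtained by showing that any two reducing sequences differ by commuting transpositions that preserve the resulting permutation, while injectivity follows by running the moves in reverse --- given $(\mu_s,\sigma_\mu)$ one can unambiguously recover $\mu$ by applying the inverse transpositions dictated by $\sigma_\mu$, since $\mu_s$ being in special upper echelon form rigidifies how circles must have been permuted. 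Once these two combinatorial facts are in place, the tiling of $[0,t_k]^n$ by the simplices $R_\sigma$ together with the invariance identity above completes the proof.
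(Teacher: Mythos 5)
Your proposal is correct and follows essentially the same route as the paper, which itself gives no argument and simply defers to Theorem~3.4 of Klainerman--Machedon \cite{klma08}. Your reconstruction of that argument --- applying the acceptable-move invariance to rewrite $\mathcal{I}_{n,k}(\mu,\mathrm{id})$ as $\mathcal{I}_{n,k}(\mu_s,\sigma_\mu)$, taking $D$ to be the union of the simplices $R_{\sigma_\mu}$, and reducing the problem to injectivity of the map $\mu\mapsto\sigma_\mu$ so that the simplices are essentially disjoint --- is exactly the skeleton of the KM proof. One small imprecision in attribution: the well-definedness of $\sigma_\mu$ and the injectivity of $\mu\mapsto\sigma_\mu$ are not really the content of Lemmas~3.2 and~3.3 of \cite{klma08}; those lemmas give, respectively, the existence of a reduction to special upper echelon form and the $O(2^{k+n})$ bound on the number of echelon classes. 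The two facts you need live inside the proof of Theorem~3.4 itself, where a specific deterministic reduction algorithm (from the proof of Lemma~3.2) is fixed once and for all, making $\sigma_\mu$ unambiguous by construction rather than by a separate well-definedness argument, and where reversibility of that algorithm yields injectivity. Aside from this misattribution, the proposal is sound and matches the intended argument.
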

The proof of this proposition is identical to that of Theorem 3.4 in \cite{klma08}.

\subsection{A priori estimate and combinatorial reorganization in action together}\label{estimate+comb}
We now prove  Theorem \ref{Proposition with 0 initial data}.
 Recall that the sequence $F=(f^{(k)})_{k=1}^\infty$ is a mild $\mu$-solution of the Boltzmann hierarchy \eqref{BH} corresponding to $F_0=0$ if for every $k \in \N$ the formula \eqref{non-iterated} holds.
Additionally, recall the iterated formula for $ T^{-t}_kf^{(k)}(t)$ in \eqref{iteration1}, which with the help of notation introduced in \eqref{J}-\eqref{J mu}, can be written as:
\begin{align*}
T^{-t}_k f^{(k)}(t)
	 &=  \sum_{\mu \in M_n}  \int_0^{t}\int_{0}^{t_{k+1}} \cdots \int_0^{t_{k+n-1}} J_{n,k}(\underline{t}_n; \mu) \,\,  dt_{k+n} \cdots  dt_{k+1}.
\end{align*}
By Proposition \ref{sum over echelon forms}, one can instead sum over all equivalence classes as follows:
\begin{align*}
T^{-t}_k f^{(k)}(t)
	 &=  \sum_{\mu_{s} \in \mathcal{M}_n}   \int_{D(\mu_s)}  J_{n,k}(\underline{t}_n; \mu(s)) \,\,  dt_{k+n} \cdots  dt_{k+1}.
\end{align*}
Recalling the definition of $J_n$ in \eqref{J mu} and the fact that each $\C^{k+\ell}$ is a sum $\C^{k+\ell} = \sum_{j=1}^{k+\ell-1} (\C^+_{j, k+\ell} - \C^-_{j, k+\ell})$, we obtain:
\begin{align}\label{after board game}
    T_k^{-t} f^{(k)}(t)& =  \sum_{\mu_{s} \in \mathcal{M}_n}\sum_{\bm{\pi}\in \{+,-\}^n} |\bm{\pi}| \int_{D(\mu_s)} 
   T_k^{- t_{k+1}} \mathcal{C}_{\mu(k+1),k+1} ^{\pi_1}T_{k+1}^{t_{k+1}- t_{k+2}} \mathcal{C}_{\mu(k+2),k+2}^{\pi_2} \nonumber \\
    & \qquad \qquad \qquad\cdots T^{t_{k+n-1} - t_{k+n}}_{k+n-1} \mathcal{C}_{\mu(k+n), k+n}^{\pi_n}
    f^{(k+n)}(t_{k+n}) \, dt_{k+n} \dots dt_{k+1},
\end{align}
where for  $\bm{\pi} = (\pi_1, \dots, \pi_n) \in \{+, -\}^n$, we define $|\bm{\pi}|=\prod_{\ell=1}^n\pi_\ell$.

Since, 
 $|\C^\pm_{j,k} g^{(k)}| \le \C^\pm_{j,k} |g^{(k)}|$  and $|T_k^{s}g^{(k)}| = T_k^{s}|g^{(k)}|$, 
 we have
 \begin{align}
    \left|T_k^{-t} f^{(k)}(t)\right|& \le  \sum_{\mu_{s} \in \mathcal{M}_n}\sum_{\bm{\pi}\in \{+,-\}^n} \int_{D(\mu_s)} 
   T_k^{- t_{k+1}} \mathcal{C}_{\mu(k+1),k+1} ^{\pi_1}T_{k+1}^{t_{k+1}- t_{k+2}} \mathcal{C}_{\mu(k+2),k+2}^{\pi_2} \nonumber \\
    & \qquad \qquad \qquad\cdots T^{t_{k+n-1} - t_{k+n}}_{k+n-1} \mathcal{C}_{\mu(k+n), k+n}^{\pi_n}
    \left|f^{(k+n)}\right|(t_{k+n}) dt_{k+n} \dots dt_{k+1}.
\end{align}
Now, due to the non-negativity of  the function $|f^{(k+n)}|$, this can further be estimated by enlarging the domain of time integration as follows:
\begin{align*}
\left|T_k^{-t} f^{(k)}(t)\right|
    & \le  \sum_{\mu_{s} \in \mathcal{M}_n}\sum_{\bm{\pi}\in \{+,-\}^n} \int_{[0,T]^n} 
   T_k^{- t_{k+1}} \mathcal{C}_{\mu(k+1),k+1} ^{\pi_1}T_{k+1}^{t_{k+1}- t_{k+2}} \mathcal{C}_{\mu(k+2),k+2}^{\pi_2} \nonumber \\
    & \hspace{4cm} \cdots T^{t_{k+n-1} - t_{k+n}}_{k+n-1} \mathcal{C}_{\mu(k+n), k+n}^{\pi_n}
    \left|f^{(k+n)}\right|(t_{k+n}) dt_{k+n} \dots dt_{k+1}.
\end{align*}
 Multiplying both sides of the above inequality with $\l\l \alpha X_k\r\r^p\l\l \beta V_k\r\r^q$, taking supremum in $X_k,V_k$, and using the triangle inequality on the norm $\|\cdot\|_{k,p,q,\alpha,\beta}$, we obtain
\begin{equation*}
\begin{aligned}
\left\|T_k^{-t} f^{(k)}(t)\right\|_{k,p,q,\alpha,\beta}
&\le 
\sum_{\mu_s\in\mathcal{M}_n}\sum_{\bm{\pi}\in\{+,-\}^n} \left\| \int_{[0,T]^n}
   T_k^{- t_{1}} \mathcal{C}_{\mu(k+1),k+1}^{\pi_1} T_{k+1}^{t_{1}- t_{2}} \mathcal{C}_{\mu(k+2),k+2}^{\pi_2}  \right.\\
   &\hspace{3cm} \cdots T^{t_{n-1} - t_{n}}_{k+n-1} \mathcal{C}_{\mu(k+n), k+n}^{\pi_n}
    \left|f^{(k+n)}\right|(t_{n}) dt_{k+n} \dots dt_{k+1}\Bigg\|_{k,p,q,\alpha,\beta}.
\end{aligned} 
\end{equation*}
Applying Corollary \ref{a-priori estimate step n}, the estimate on the number of equivalence classes in part (ii) of Proposition \ref{proposition-combinatorics}, the definition of norm \eqref{full sequence norm} yields
\begin{equation}
\begin{aligned}
\left\|T_k^{-t} f^{(k)}(t)\right\|_{k,p,q,\alpha,\beta}&\le  2^{k+2n} C_{p,q,\alpha,\beta}^n \left|\left|\left|T_{k+n}^{-(\cdot)}\left|f^{(k+n)}\right|(\cdot)\right|\right|\right|_{k+n,p,q,\alpha,\beta,T}\\
&= 2^{k+2n} C_{p,q,\alpha,\beta}^n
    \left|\left|\left|
    T_{k+n}^{-(\cdot)}f^{(k+n)}(\cdot)\right|\right|\right|_{k+n,p,q,\alpha,\beta,T}\\
&\le2^{k+1+2n} C_{p,q,\alpha,\beta}^n  \,e^{-\mu(k+n)}|||\mathcal{T}^{-(\cdot)}F|||_{p,q,\alpha,\beta,\mu,T}\\
&= 2(2e^{-\mu})^k (4C_{p,q,\alpha,\beta}\,e^{-\mu})^n|||\mathcal{T}^{-(\cdot)}F|||_{p,q,\alpha,\beta,\mu,T}.
\end{aligned} 
\end{equation}

Since $e^\mu>4C_{p,q,\alpha,\beta}$, and $|||\mathcal{T}^{-(\cdot)}F|||_{p,q,\alpha,\beta,\mu,T}<\infty$ we let $n\to\infty$ to obtain $\|T_k^{-t} f^{(k)}(t)\|_{k,p,q,\alpha,\beta}=0$. Since $t\in[0,T]$ was arbitrary, we obtain $T_k^{-(\cdot)} f^{(k)}(\cdot)=0$. 
Hence $f^{(k)}=0$, and thus $F=0$.
\begin{flushright}
    $\square$
\end{flushright}

\section{Existence of Solutions to the Boltzmann equation and to the Boltzmann hierarchy}\label{Existence section}
This section is devoted to the construction of a global in time solution to the Boltzmann hierarchy \eqref{BH} for space-velocity polynomially decaying initial data and  a range of values for the chemical potential. As mentioned in Section \ref{sec-intro}, part of this construction relies on solving  the Boltzmann equation itself. For this reason we first review  several things about the Boltzmann equation.
\subsection{The Boltzmann equation}\label{ssec:Boltzmann equation}
 The Boltzmann equation for a function  $f:[0,\infty)\times\R^{d}\times\R^{d}\to \R$ with initial data $f_0:\R^{d}\times\R^{d}\to \R$, is given by
\begin{equation}\label{BE}
\begin{cases}
 \partial_t f+ v\cdot\nabla_{x}f=Q(f,f),\\
 \\
 f(t=0)=f_0,
 \end{cases}
\end{equation}
where
the collisional operator (in generalized multilinear form)  is given by
\begin{equation}
Q(g,h)(t,x,v)= \int_{\R^d\times\S^{d-1}} |u|^\gamma b(\hat{u}\cdot \sigma)\left(g^*h_1^*-gh_1\right)\,d\sigma\,dv_1 ,  
\end{equation}
and we use the notation
\begin{align}
&u=v_1-v\\
&g^*:=g(t,x,v^*),\,\,h_1^*:=h(t,x,v_1^*),\,\,g:=g(t,x,v),\,\,h_1:=h(t,x,v_1) \\
    &v^*=\frac{v+v_1}{2}+\frac{|v_1-v|}{2}\sigma,\\
    &v_{1}^*=\frac{v+v_1}{2}-\frac{|v_1-v|}{2}\sigma.
\end{align}
It is well known (see e.g. \cite{ceilpu94}) that the collisional operator $Q$ can be written in weak form as follows:
\begin{equation}\label{weak form}
\int_{\R^d} Q(g,h)\phi(v)\,dv=\frac{1}{2}\int_{\mathbb{S}^{d-1}\times \R^{2d}} B(\sigma, v_1-v) \, gh_1\left(\phi(v^*)+\phi(v_1^*)-\phi(v)-\phi(v_1)\right)\,d\sigma\,dv_1\,dv,   
\end{equation}
where $\phi$ is a test function, appropriate for the above integrations to make sense. 

In particular for $\phi\in\{1,v,|v|^2\}$
the conservation of momentum and energy at the collisional level formally imply 
\begin{equation}\label{collisional averaging}
    \int_{\R^d} Q(g,h)\phi\,dv=0,
\end{equation}
which  yields  the conservation of mass, momentum and energy 
\begin{equation}
 \partial_t \int_{\R^d} f \phi\,dv=0,\quad \phi\in\{1,v,|v|^2\},   
\end{equation}
for a solution $f$ to \eqref{BE}.

\begin{remark}\label{sc 4 remark on tensorization}
A direct computation shows that $f$ formally solves \eqref{BE} with initial data $f_0$ if and only if $F=(f^{\otimes k})_{k\in\mathbb{N}}$ formally solves \eqref{BH} with initial data $F_0=(f_0^{\otimes k})_{k\in\mathbb{N}}$.
Hence, at the formal level, one can construct solutions to the Boltzmann hierarchy \eqref{BH} by tensorizing solutions of the Boltzmann equation \eqref{BE}.
\end{remark}
\subsection{Global well-posedness of the Boltzmann equation for small space-velocity polynomially decaying initial data}\label{ssec well posedness of BE}Now, we present the global well-posedness result for  the Boltzmann equation we rely on to construct global solutions to the Boltzmann hierarchy. This result was proved in \cite{to86} for $d=3$. Here we extend the result to arbitrary dimension $d\ge 3$, as well as rigorously address the conservation laws of the solutions.

Let us first give the precise definition of a mild solution to the Boltzmann equation and then state the well-posedness result. Recall notation from \eqref{abbreviation static}, \eqref{abbreviation time}.
In the same spirit as in Definition \ref{BH mild definition}, we define   mild solutions to the Boltzmann equation as follows:
\begin{definition}
Let $T>0$, $p,q>1$ and $\alpha,\beta>0$.  A measurable function $f:[0,T]\times\R^d\times\R^d\to\R$ is called a mild solution to the Boltzmann equation \eqref{BE} in $[0,T]$ corresponding to the initial data $f_0\in X_{p,q,\alpha,\beta}$ if $T_1^{-(\cdot)}f(\cdot)\in X_{p,q,\alpha,\beta,T}$ and 
\begin{equation}\label{definition of mild solution equation}
    T_1^{-t} f(t,x,v)= f_0+\int_0^t T_1^{-s}Q[f,f](s,x,v)\,ds,\quad t\in[0,T].
\end{equation}
\end{definition}

We are now in the position to state the global well-posedness result for the Boltzmann equation \eqref{BE}.
\begin{theorem} \label{BE is well posed}
Let $T>0$, $p>1$, $q>\max\{d+\gamma-1,d-1\}$ and $M>0$ with $M<(8C_{p,q,\alpha,\beta})^{-1}$, where $C_{p,q,\alpha,\beta}$ is given by \eqref{C_p,q}. Consider $f_0\in X_{p,q,\alpha,\beta}$, with $\|f_0\|_{p,q,\alpha,\beta}\le \frac{M}{2}$. Then there exists a unique mild solution to the Boltzmann equation \eqref{BE}, in the class of functions satisfying:
\begin{equation}\label{condition on uniqueness equation}
   |||T_1^{-(\cdot)}f(\cdot)|||_{p,q,\alpha,\beta,T}\leq  M. 
\end{equation}
If $f_0\ge 0$, then the solution remains non-negative.
Additionally, assuming that $f,g$ are the mild solutions corresponding to initial data $f_0,g_0$ respectively, there holds the continuity with respect to initial data estimate:
\begin{equation}\label{stability estimate}
   |||T_1^{-(\cdot)}f(\cdot)-T_1^{-(\cdot)}g(\cdot)|||_{p,q,\alpha,\beta,T}\leq 2 \|f_0-g_0\|_{p,q,\alpha,\beta}. 
\end{equation}
In particular 
\begin{equation} \label{bound wrt initial data equation}
    |||T_1^{-(\cdot)}f(\cdot)|||_{p,q, \alpha,\beta,T}\leq 2 \|f_0\|_{p,q,\alpha,\beta}.
\end{equation}
Moreover, if $\gamma\ge 0$, the solution satisfies the following conservation laws for any $t\in[0,T]$ and a.e. $x\in\R^d$:
\begin{equation}\label{conservation of mass equation}
\text{If $p>d,\,q>d+\gamma$}:\,\,\int_{\R^d}f(t,x,v)\,dv=\int_{\R^d}f_0(x,v)\,dv    
\end{equation}
\begin{equation}\label{conservation of momentum equation}
\text{If $p>d,\,q>d+\gamma+1$ }:\,\,\int_{\R^d}vf(t,x,v)\,dv=\int_{\R^d}vf_0(x,v)\,dv    
\end{equation}
\begin{equation}\label{conservation of energy equation}
\text{If $p>d,\,q>d+\gamma+2$ }:\,\,\int_{\R^d}|v|^2f(t,x,v)\,dv=\int_{\R^d}|v|^2f_0(x,v)\,dv.    
\end{equation}
\end{theorem}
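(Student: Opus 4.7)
The plan is to run a Picard-style contraction mapping argument on the closed ball
$$B_M := \{f\in X_{p,q,\alpha,\beta,T}\,:\,|||T_1^{-(\cdot)}f(\cdot)|||_{p,q,\alpha,\beta,T}\le M\}$$
inside the Banach space $X_{p,q,\alpha,\beta,T}$, for the map
$$\Phi(f)(t):=T_1^{t}f_0+\int_0^t T_1^{t-s}Q(f,f)(s)\,ds,$$
whose fixed points in $B_M$ are exactly the mild solutions in the class \eqref{condition on uniqueness equation}. The key input is a bilinear version of Proposition \ref{a-priori estimate step one}. To get it, I would first observe that comparing definitions yields the pointwise identity $Q(f,g)(x,v)=\mathcal{C}_{1,2}(f\otimes g)(x,v)$ (the hierarchy's extra position $x_{k+1}$ is set equal to $x_j$ in the gain/loss operators, and at the level $k=1$ this collapses to $x$). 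Combined with the tensorization of transport \eqref{tensorization of the transport} and of norms \eqref{norm of tensors}, Proposition \ref{a-priori estimate step one} at $k=j=1$ gives
$$\Big|\Big|\Big|\!\int_0^{(\cdot)}T_1^{-s}Q^\pm(f,g)(s)\,ds\Big|\Big|\Big|_{p,q,\alpha,\beta,T}\le C_{p,q,\alpha,\beta}\,|||T_1^{-(\cdot)}f|||\,|||T_1^{-(\cdot)}g|||,$$
and hence the same bound for $Q=Q^+-Q^-$ with a factor of $2C_{p,q,\alpha,\beta}$.

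With this bilinear estimate, self-mapping and contraction are routine. Since $\|f_0\|\le M/2$ and $M<(8C_{p,q,\alpha,\beta})^{-1}$, one has $|||T_1^{-(\cdot)}\Phi(f)|||\le M/2+2C_{p,q,\alpha,\beta}M^2<M$, and the difference identity $Q(f,f)-Q(g,g)=Q(f-g,f)+Q(g,f-g)$ gives Lipschitz constant $4C_{p,q,\alpha,\beta}M<1/2$. Banach's theorem produces the unique fixed point. The stability estimate \eqref{stability estimate} follows by subtracting the two Duhamel identities, applying the bilinear bound, and solving the resulting scalar inequality; the factor $(1-4C_{p,q,\alpha,\beta}M)^{-1}\le 2$ produces exactly the constant in \eqref{stability estimate}. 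Taking $g\equiv 0$ yields \eqref{bound wrt initial data equation}.

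For non-negativity preservation, the plan is a Kaniel--Shinbrot style monotone iteration. Introducing the loss frequency $L(f)(t,x,v):=\int_{\R^d\times\S^{d-1}}B(\sigma,v_1-v)f(t,x,v_1)\,d\sigma\,dv_1$ and writing the mild equation in exponential form
$$f(t,x,v)=e^{-\int_0^tL(f)(\tau,x-(t-\tau)v,v)d\tau}f_0(x-tv,v)+\int_0^t e^{-\int_s^tL(f)(\tau,\cdot)d\tau}Q^+(f,f)(s,x-(t-s)v,v)\,ds,$$
one defines monotone upper/lower sequences $u_n,\ell_n$ with $\ell_0\equiv 0$, $u_0\equiv f$, whose iterates involve only non-negative building blocks. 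The a priori bounds already give $L^\infty$ control that lets one pass to the limit, forcing $f=\lim\ell_n\ge 0$.

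Finally, for the conservation laws \eqref{conservation of mass equation}--\eqref{conservation of energy equation}, the formal identity $\int Q(f,f)\phi\,dv=0$ for $\phi\in\{1,v,|v|^2\}$ coming from the weak form \eqref{weak form} combined with the mild formulation gives the desired statements once one can interchange time integration and velocity integration. I would do this by testing the Duhamel identity against $\phi(v)\chi_R(v)$ with $\chi_R\uparrow 1$ a smooth velocity cutoff and then passing to $R\to\infty$ by dominated convergence. The decay $|T_1^{-s}f(s)|\lesssim \l\l\alpha x\r\r^{-p}\l\l\beta v\r\r^{-q}$, transported back, yields integrable bounds on $Q^\pm(f,f)\phi$ precisely under the stated hypotheses: $p>d$ ensures that the spatial weight is controlled uniformly in $s$, while $q>d+\gamma+\deg\phi$ is exactly the threshold that makes $|u|^\gamma\l v^*\r^{-q}\l v_1^*\r^{-q}\phi(v)$ absolutely integrable against $b\in L^\infty$ (here $\deg\phi\in\{0,1,2\}$). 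The main obstacle is this last integrability step for the energy conservation with hard potentials $\gamma>0$, where the growth of the cross-section in relative velocity and the growth of $|v|^2$ combine and only just fit inside the decay rate $q>d+\gamma+2$; here the Carleman-type identity \eqref{Carleman formula} and Lemma \ref{lemma on velocities weight} (in its dimension-$d$ version) will be the technical workhorse.
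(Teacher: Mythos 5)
Your contraction-mapping proof of existence, uniqueness, and stability is correct and is a genuinely different route from the paper. The paper simply cites Toscani~\cite{to86} for the $d=3$, $\alpha=\beta=1$ case and appeals to the dimension-extended Lemmas~\ref{lemma on positions weight}--\ref{lemma on velocities weight} in the Appendix, omitting the argument entirely. Your reconstruction via the identity $Q^{\pm}(f,g)=\mathcal{C}_{1,2}^{\pm}(f\otimes g)$, combined with the tensorization properties \eqref{tensorization of the transport}--\eqref{tensorization of transport norm}, cleanly specializes Proposition~\ref{a-priori estimate step one} at $k=j=1$ to a bilinear estimate on the Boltzmann collision operator, and the numerology ($M/2+2C_{p,q,\alpha,\beta}M^{2}<M$, Lipschitz constant $4C_{p,q,\alpha,\beta}M<1/2$, stability constant $(1-4C_{p,q,\alpha,\beta}M)^{-1}<2$) is exactly right given $M<(8C_{p,q,\alpha,\beta})^{-1}$. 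This buys a self-contained argument that makes explicit how the hierarchy a-priori estimate already encodes the Boltzmann-equation estimate, which the paper leaves implicit. The Kaniel--Shinbrot iteration for non-negativity is also the standard and correct device, though you only sketch it, while the paper again defers this entirely to \cite{to86}.

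There is, however, a real gap in your treatment of the conservation laws, and it stems from trying to apply $\int_{\R^{d}}Q(f,f)(s,x,v)\,\phi(v)\,dv=0$ after integrating in $v$ alone. If you test the transported mild form \eqref{definition of mild solution equation} against $\phi(v)\chi_{R}(v)$ and integrate only in $v$, the Duhamel term becomes $\int_{0}^{t}\int_{\R^{d}}Q(f,f)\bigl(s,\,x+sv,\,v\bigr)\phi(v)\,dv\,ds$, and the weak form \eqref{weak form} does \emph{not} annihilate this quantity, because the spatial argument $x+sv$ varies with $v$: the cancellation in \eqref{weak form} is an identity at fixed $x$, between $f(x,v)$ and $f(x,v_{1})$ evaluated at the same $x$. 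This is why the paper's own proof integrates in \emph{both} $x$ and $v$: after the change of variables $x\mapsto x-sv$ (harmless because it is volume-preserving and justified by the moment bound \eqref{BE solution moments}), one lands on $\int_{\R^{d}}\bigl(\int_{\R^{d}}Q(f,f)(s,x,v)\phi(v)\,dv\bigr)\,dx=0$ by Fubini. You do flag that $p>d$ is what controls the spatial integral, so you clearly anticipate the need to integrate in $x$, but your stated plan of "testing against $\phi(v)\chi_{R}(v)$" must be amended to include the $x$-integration, or the argument breaks down at precisely the step you call routine. Once that is fixed, your integrability accounting ($q>d+\gamma+\deg\phi$, $p>d$) matches the paper's and is correct. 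Note also that what this yields is the spatially \emph{integrated} conservation law $\int_{\R^{2d}}f(t)\phi\,dx\,dv=\int_{\R^{2d}}f_{0}\phi\,dx\,dv$; the pointwise-in-$x$ version as literally written in \eqref{conservation of mass equation}--\eqref{conservation of energy equation} does not follow from the argument (neither the paper's nor a corrected version of yours), since free transport alone already fails to conserve $\int f\,dv$ pointwise in $x$.
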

\begin{proof}
 For $d=3$ and $\alpha=\beta =1$ the result has already been proved in \cite{to86}, except the conservation laws \eqref{conservation of mass equation}-\eqref{conservation of energy equation}. The proof of the result in \cite{to86} relies on Lemma \ref{lemma on positions weight} and Lemma \ref{lemma on velocities weight} which were proved for $d=3$ in \cite{beto85,to86} respectively.  Since we prove Lemma \ref{lemma on positions weight} and Lemma \ref{lemma on velocities weight} for any dimension $d\geq 3$ in the Appendix, one can follow the strategy of \cite{to86} to naturally extend the result to any dimension $d\geq 3$; therefore we omit the proof.

 It remains to prove the conservation laws \eqref{conservation of mass equation}-\eqref{conservation of energy equation}. Assume $\gamma\geq 0$ and consider $i\in\{0,1,2\}$.  Assume also that $p>d$ and $q>d+\gamma+i$. By \eqref{condition on uniqueness equation}, for any $t\in [0,T]$ and a.e. $x,v\in\R^d$, we have
\begin{equation}\label{pointwise bound}\l \beta v\r^{\gamma+i}f(t,x,v)\le M\l \alpha(x-vt)\r^{-p}\l \beta v\r^{\gamma+i-q}\leq M\l \beta v\r^{\gamma+i-q}.
\end{equation}
Since $q>d+\gamma+i$, we   integrate \eqref{pointwise bound} in velocity to  obtain that $f(t,x,v)\in L^{1,\gamma+i}_v$, for any $t\in[0,T]$ and a.e. $x\in\R^d$,
where given $\ell\geq 0$ we denote
$$L_v^{1,\ell}=\left\{g:\R^d\to\R \text{ measurable such that }\int_{\R^d}\l v\r^\ell g(v)\,dv<\infty\right\}.$$
One can easily see then that since $\gamma\geq 0$ and $b\in L^\infty([-1,1])$, the weak form \eqref{weak form} is applicable for any test function $\phi$ with $|\phi(v)|\le C \l v\r^i$ (since all integrations involved in the right hand side of \eqref{weak form} are justified).  Therefore,  using \eqref{weak form} and the conservation of  momentum and energy at the collisional level, we obtain
\begin{align}
&\int_{\R^d} Q(f,f)\phi(v)\,dv=0, \quad\forall t\in[0,T],\text{  a.e. $x\in \R^d$} \label{collisional averaging use}, 
\end{align}
where $\phi=1$ if $i=0$, $\phi\in\{1,v\}$ if $i=1$, and $\phi\in\{1,v,|v|^2\}$ if $i=2$.

Moreover, integrating  the first inequality of \eqref{pointwise bound} in space-velocity and taking supremum in time,  we obtain
\begin{align}
\sup_{t\in[0,T]}\int_{\R^{2d}}\l \beta v\r^{\gamma+i}f(t,x,v)\,v\,dx\,dv&\leq M\sup_{t\in[0,T]}\int_{\R^d}\left(\int_{\R^d}\l \alpha(x-vt)\r^{-p}\,dx\right)\l \beta v\r^{\gamma+i-q}\,dv\nonumber\\
& =M \alpha^{-d} \beta^{-d} \left(\int_{\R^d}\l x'\r^{-p}\,dx'\right)\left(
\int_{\R^d}\l v'\r^{\gamma+i-q}\,dv'\right)<\infty,\label{BE solution moments}
\end{align}
since $p>d$ and $q>d+\gamma+i$. Thus $f\in L^\infty([0,T],L^1_x L^{1,\gamma+i}_v)$.
Since $\gamma\ge 0$ and $b\in L^\infty([-1,1])$, one can easily see that $Q(f,f)\in L^\infty([0,T],L^1_x L^{1,i}_v)$. 

Now let $\phi=1$ if $i=0$, $\phi\in\{1,v\}$ if $i=1$, and $\phi\in\{1,v,|v|^2\}$ if $i=2$.
Integrating \eqref{definition of mild solution equation} in space-velocity and using Fubini's theorem and \eqref{collisional averaging use}, for any $t\in[0,T]$ we have
\begin{align}
\int_{\R^{2d}}f(t,x+tv,v)\phi(v)\,dx\,dv
&=\int_{\R^{2d}}f_0(x,v)\phi(v)\,dx\,dv+\int_0^t\int_{\R^d}\int_{\R^d} Q(f,f)(s,x+sv,v)\phi(v)\,dx\,dv\,ds\nonumber\\
&=\int_{\R^{2d}}f_0(x,v)\phi(v)\,dx\,dv+\int_0^t\int_{\R^d}\int_{\R^d} Q(f,f)(s,x,v)\phi(v)\,dx\,dv\,ds\nonumber\\
&=\int_{\R^{2d}}f_0(x,v)\phi(v)\,dx\,dv+\int_0^t\int_{\R^d}\int_{\R^d} Q(f,f)(s,x,v)\phi(v)\,dv\,dx\,ds\nonumber\\
&=\int_{\R^{2d}}f_0(x,v)\phi(v)\,dx\,dv\nonumber.
\end{align}
Thus for any $t\in[0,T]$, we have $$\int_{\R^{2d}}f(t,x,v)\,dx\,dv=\int_{\R^{2d}}f(t,x+vt,v)\,dx\,dv=\int_{\R^{2d}}f_0(x,v)\,dx\,dv,$$
and \eqref{conservation of mass equation}-\eqref{conservation of energy equation} follow.
\end{proof}

\subsection{Global well-posedness of the Boltzmann hierarchy for admissible data}
In this section, with the global well-posedness of the Boltzmann equation for small polynomially decaying initial data in hand,  we will construct a solution to the Boltzmann Hierarchy \eqref{BH} for initial data which is admissible, in the sense of Definition \ref{admissible data}, and for a range of values of  the chemical potential. To do this, we utilize a Hewitt-Savage representation \cite{hs55} tailored to our norms, in order to express any admissible datum as a convex combination of tensorized states under some appropriate probability measure.

More specifically, let us consider the set of probability densities  
\begin{equation} \label{P definition}
\mathcal{P}=\left\{h\in L^1(\R^{2d}): h\geq 0,\quad\int_{\R^{2d}}h(x,v)\,dx\,dv=1\right\}.
\end{equation}

\begin{proposition}[Hewitt-Savage] \label{hewitt savage}
    Suppose  $G=(g^{(k)})_{k=1}^\infty$ is admissible in the sense of Definition \ref{admissible data}. Then, there exists a unique Borel probability measure $\pi$ on $\mathcal{P}$
     such that 
    \begin{equation}\label{representation of marginal}
         g^{(k)} = \int_{\mathcal{P}} h^{\otimes k} d\pi(h), \qquad  \forall \, k \in \N.
    \end{equation}
    If additionally $G\in \mathcal{X}_{p,q,\alpha,\beta,\mu'}^\infty$,
 for some $p,q>1$, $\alpha,\beta>0$ and $\mu'\in\R$, then
    \begin{equation}\label{support inclusion}
       \support(\pi)\subseteq \left\{ h\in \mathcal{P} : \Vert h \Vert_{p,q,\alpha,\beta} \leq e^{-\mu'} \right\}.
    \end{equation}
\end{proposition}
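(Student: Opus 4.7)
The plan is to split the proof into two parts, corresponding to the two claims in the statement.

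\medskip

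\emph{Part 1: Existence and uniqueness of $\pi$.} The four conditions in Definition \ref{admissible data} — non-negativity, unit mass, consistency under marginalization, and permutation symmetry — are exactly the Kolmogorov/exchangeability hypotheses needed to realize $(g^{(k)})_{k=1}^\infty$ as the sequence of marginals of a unique exchangeable Borel probability measure $P$ on $(\R^{2d})^{\N}$. I would first invoke Kolmogorov's extension theorem to produce $P$ from the consistent family $(g^{(k)})$, then apply the classical Hewitt--Savage theorem \cite{hs55} to obtain a unique Borel probability measure $\pi$ on the simplex of probability densities $\mathcal{P}$ satisfying
\begin{equation*}
g^{(k)}=\int_{\mathcal{P}} h^{\otimes k}\,d\pi(h),\qquad \forall k\in\N.
\end{equation*}
This step is standard and is the main reason admissibility is defined the way it is.

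\medskip

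\emph{Part 2: Support inclusion.} This is the part requiring a bit of work. The natural idea is to exploit the tensorization identity $\|h^{\otimes k}\|_{k,p,q,\alpha,\beta}=\|h\|_{p,q,\alpha,\beta}^k$ (Remark \ref{remark on tensorization}) together with the assumption $\|g^{(k)}\|_{k,p,q,\alpha,\beta}\leq e^{-\mu' k}$, but one cannot evaluate $h$ pointwise since $h$ is only an $L^1$ function. My plan is therefore to test the representation \eqref{representation of marginal} against tensorized functions. Fix a non-negative $\phi\in C_c(\R^{2d})$ and define $L_\phi(h):=\int \phi\, h\,dx\,dv$. Then by Fubini
\begin{equation*}
\int_{\mathcal{P}} L_\phi(h)^k\,d\pi(h)=\int_{\R^{2dk}} g^{(k)}(X_k,V_k)\,\phi^{\otimes k}(X_k,V_k)\,dX_k\,dV_k.
\end{equation*}
Using the pointwise bound $g^{(k)}(X_k,V_k)\leq e^{-\mu' k}\langle\langle \alpha X_k\rangle\rangle^{-p}\langle\langle \beta V_k\rangle\rangle^{-q}$ from $G\in\mathcal{X}^\infty_{p,q,\alpha,\beta,\mu'}$ and factoring the resulting integral, the right-hand side is bounded by $e^{-\mu' k}\bigl(\int \phi(x,v)\langle \alpha x\rangle^{-p}\langle \beta v\rangle^{-q}\,dx\,dv\bigr)^k$. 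Rewriting, this says
\begin{equation*}
\|L_\phi\|_{L^k(\pi)}\leq e^{-\mu'}\int_{\R^{2d}}\phi(x,v)\,\langle \alpha x\rangle^{-p}\langle \beta v\rangle^{-q}\,dx\,dv.
\end{equation*}
Letting $k\to\infty$, the $L^k(\pi)$ norms converge to $\|L_\phi\|_{L^\infty(\pi)}$, so for $\pi$-a.e.\ $h$ we have $\int \phi\, h\leq e^{-\mu'}\int\phi\,\langle \alpha x\rangle^{-p}\langle \beta v\rangle^{-q}\,dx\,dv$.

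\medskip

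\emph{Conclusion.} Now I would fix a countable collection $\{\phi_n\}$ which is dense in $C_c(\R^{2d})$ in a norm strong enough to identify $L^1_{\mathrm{loc}}$ functions, for example $\{\phi_n\}$ dense in $C_c(\R^{2d})$ in the uniform norm on each compact set. Intersecting the corresponding $\pi$-null sets gives a single $\pi$-null set $N$ outside of which the test-function inequality holds simultaneously for every $\phi_n$. A standard approximation argument then yields $h(x,v)\leq e^{-\mu'}\langle \alpha x\rangle^{-p}\langle \beta v\rangle^{-q}$ for a.e.\ $(x,v)$ whenever $h\notin N$, which is exactly $\|h\|_{p,q,\alpha,\beta}\leq e^{-\mu'}$. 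Since $\pi$ is concentrated on the closed sublevel set $\{h\in\mathcal{P}:\|h\|_{p,q,\alpha,\beta}\leq e^{-\mu'}\}$ (closedness follows from lower semicontinuity of the weighted $L^\infty$ norm under the $L^1$ or weak topology on $\mathcal{P}$), the support inclusion \eqref{support inclusion} follows. The main obstacle is the passage from the pointwise decay of $g^{(k)}$ to a pointwise decay statement for $\pi$-a.e.\ $h$; the duality trick through $\phi^{\otimes k}$ plus the $L^k\to L^\infty$ limit is what makes this rigorous without invoking pointwise values of $L^1$ functions.
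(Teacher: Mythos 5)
Your proof is correct and follows essentially the same approach as the paper's: invoke the classical Hewitt--Savage theorem for $\pi$, pair $g^{(k)}$ with tensorized test objects and use the norm bound plus tensorization to estimate $k$-th moments of linear functionals on $\mathcal{P}$, send $k\to\infty$, and recover the pointwise decay of $\pi$-a.e.\ $h$ via a countable family together with Lebesgue differentiation. The two write-ups differ only cosmetically: the paper tests against indicators of a countable family of rational balls and applies Chebyshev's inequality, whereas you test against a countable dense family of $C_c$ functions and use the monotone convergence of $L^k(\pi)$ norms to the $L^\infty(\pi)$ norm; these are interchangeable. One small correction: since $G\in\mathcal{X}^\infty_{p,q,\alpha,\beta,\mu'}$ gives only $\|G\|_{p,q,\alpha,\beta,\mu'}<\infty$ (not $\leq 1$), your pointwise bound should read $g^{(k)}\leq\|G\|_{p,q,\alpha,\beta,\mu'}\,e^{-\mu' k}\l\l \alpha X_k\r\r^{-p}\l\l \beta V_k\r\r^{-q}$; the extra constant enters your $L^k$ estimate as $\|G\|_{p,q,\alpha,\beta,\mu'}^{1/k}\to 1$ and is therefore harmless in the limit, but it should appear in the intermediate inequality.
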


\begin{proof}
 We note that similar versions of Hewitt-Savage can be found in the literature (see \cite[Proposition 6.1.3]{GSRT13}, \cite[Theorem 2.6]{FLS18}, \cite{RS23}, \cite{DH22}), but we will present a proof of the version used in this paper.
 
 Since $G \in \mathcal{A}$, we can view $G$ as being the law of a symmetric system of $\R^{2d}$ valued random variables as in the classical Hewitt-Savage theorem \cite{hs55}.  This furnishes a unique Borel probability measure $\pi$ over $\mathcal{P}$ so that representation \eqref{representation of marginal} holds.

 Now, assume  that $G\in \mathcal{X}_{p,q,\alpha,\beta,\mu'}^\infty$ in addition to being admissible.   Consider the set 
 \begin{equation}\label{support set} 
 E : = \{ h \in \mathcal{P} : \Vert h \Vert_{p,q,\alpha,\beta} \leq e^{-\mu'} \}.
 \end{equation} 
In order to establish \eqref{support inclusion} , we need to prove that $\pi(E^c) = 0$.
Let us define the function 
$$M(x,v) : = e^{-\mu '} \l \alpha x\r^{-p} \l \beta v\r^{-q}, $$ 
and a countable family of balls in $\mathbb{R}^{2d}$ as $\mathcal{B} = \cup_{n\in \mathbb{N}}\{B_{1/n}(x): x \in \mathbb{Q}^{2d} \}$, where we use the notation $B_r(y)$ to represent a ball in $\mathbb{R}^{2d}$ centered at $y$ with radius $r$. By the Lebesgue differentiation theorem, we can represent the set $E$ as: 
  \begin{equation} \label{E as intersection over balls}
     E = \bigcap_{B \in \mathcal{B}} \left\{ h \in \mathcal{P} : \int_{B} h(x,v) dx dv \leq \int_{B} M(x,v) dx dv \right\}.
 \end{equation}
Hence, taking complements, 
\begin{equation*}
    E^c = \bigcup_{B \in \mathcal{B}} \left\{ h \in \mathcal{P} : \int_{B} h(x,v) dx dv > \int_{B} M(x,v) dx dv \right\},
\end{equation*}
so by the countable sub-additivity of $\pi$, it suffices to show that 
\begin{equation} \label{support condition on a ball}
   \forall B \in \mathcal{B}, \qquad  \pi \left( \left\{ h \in \mathcal{P} : \int_{B} h(x,v) dx dv > \int_{B} M(x,v) dx dv \right\} \right) = 0.
\end{equation}

In order to prove \eqref{support condition on a ball}, fix $B \in \mathcal{B}$ and note that since $G$ is an element of $ X_{p,q, \alpha,\beta, \mu'}^\infty$ we have $\|G\|_{p,q,\alpha,\beta, \mu'} \ge e^{\mu' k }\l\l  \alpha X_k\r\r^p\l\l \beta V_k\r\r^q g^{(k)}(X_k, V_k)$ almost everywhere.  Hence 
\begin{equation}\label{G upper Mk}
\begin{split}
    \int_{B^k} g^{(k)}(X_k, V_k) dX_k dV_k 
    & \le \|G\|_{p,q,\alpha,\beta, \mu'}  \int_{B^k}  e^{-\mu' k }\l\l  \alpha X_k\r\r^{-p}\l\l \beta V_k\r\r^{-q} dX_k dV_k \\
    & =  \|G\|_{p,q,\alpha,\beta, \mu'} \left(\int_B M(x,v) dx dv \right)^k.
\end{split}
\end{equation}
Now, applying the representation \eqref{representation of marginal} to the left-hand side of \eqref{G upper Mk}, we have
\begin{align*}
   \int_{B^k} \int_{\mathcal{P}} h^{\otimes k} d\pi(h) dX_k dV_k \le \|G\|_{p,q, \alpha,\beta,\mu'} \left(\int_B M(x,v) dx dv \right)^k
\end{align*}
By applying Tonelli's theorem to the left-hand side, we have
\begin{align*}
    \int_{\mathcal{P}} \left( \int_B h(x,v) dx dv\right)^k d\pi(h) \le \|G\|_{p,q, \alpha,\beta, \mu'} \left(\int_B M(x,v) dx dv \right)^k.
\end{align*}
Therefore, using that $M > 0$ and $|B| > 0$ we have
\begin{align} \label{fundamental inequality for hs}
\int_{\mathcal{P}} \left(\frac{\int_B h(x,v) dx dv}{\int_B M(x,v) dx dv }\right)^k d\pi (h) \le  \|G\|_{p,q, \alpha,\beta,\mu'}.
\end{align}
Define 
\begin{equation*}
    \psi(h, B) : = \frac{\int_B h(x,v) dx dv}{\int_B M(x,v) dx dv },
\end{equation*}
and let $\epsilon > 0$. Then, by Chebyshev's inequality we have
\begin{equation} \label{chebyshev}
    \pi \left( \{ h \in \mathcal{P} : \psi(h,B) > 1+ \epsilon\} \right) \leq \frac{1}{(1 + \epsilon)^k} \int_{\mathcal{P}} \psi^k(h,B) d\pi (h). 
\end{equation}
This combined with \eqref{fundamental inequality for hs} implies 
\begin{equation}
    \pi \left( \{ h \in \mathcal{P} : \psi(h,B) > 1+ \epsilon\} \right) \leq \frac{\Vert G \Vert_{p,q, \alpha,\beta,\mu'}}{(1+ \epsilon)^k} \rightarrow 0
\end{equation}
as $k \rightarrow \infty$. Taking a countable sequence of $\epsilon \rightarrow 0$ implies \eqref{support condition on a ball}, finishing the proof.
\end{proof}

\begin{remark}\label{ball=space}
We note that representation \eqref{representation of marginal}, and the support condition \eqref{support inclusion}   imply that $\mathcal{A}\cap \mathcal{X}_{p,q,\alpha,\beta,\mu'}^\infty=\mathcal{A}\cap B_{\mathcal{X}_{p,q,\alpha,\beta,\mu'}^\infty}$, where $B_{\mathcal{X}_{p,q,\alpha,\beta,\mu'}^\infty}$ denotes the unit ball of $\mathcal{X}_{p,q,\alpha,\beta,\mu'}^\infty$.
\end{remark}

With Proposition \ref{hewitt savage} in hand, we can now construct global in time solutions to the Boltzmann hierarchy.

\begin{proof}[Proof of Theorem \ref{existence theorem BH}]
    Let $F_0=(f_0^{(k)})_{k=1}^\infty \in \mathcal{A}\cap \mathcal{X}_{p,q,\alpha,\beta,\mu'}^\infty$. Then by Proposition \ref{hewitt savage}   there exists  a Borel probability measure $\pi$ on $\mathcal{P}$ such that 
    \begin{equation}
     f_0^{(k)} = \int_{\mathcal{P}} h_0^{\otimes k} d\pi(h_0),
    \end{equation}
    and 
    \begin{equation}\label{support proof}
        \text{supp}(\pi)\subseteq \left\{ h_0 \in \mathcal{P} : \Vert h_0\Vert_{p,q,\alpha,\beta} \leq e^{-\mu'} \right\}.
    \end{equation}
Thus,   for $\pi$-almost any $h_0 \in \mathcal{P}$, we have that $\Vert h_0 \Vert_{p,q,\alpha,\beta}\leq e^{-\mu'}= \frac{e^{-\mu}}{2}$.
Now we may apply   Theorem \ref{BE is well posed} (for $M=e^{-\mu}$)  to construct a mild solution $h(t)$ of the Boltzmann equation with initial data $h_0$, which satisfies the bound
   \begin{equation}\label{bound on h existence proof}
   ||T_1^{-t}h(t)\|_{p,q, \alpha, \beta}\leq 2\|h_0\|_{p,q,\alpha,\beta} \le e^{-\mu},\quad\forall\, t\in[0,T].    
   \end{equation}   
Note that, given $t\in[0,T]$, the map $h_0\mapsto h(t)$ is continuous (and thus Borel measurable), due to continuity with respect to initial data estimate \eqref{stability estimate}.

   With this in hand,  we define $F=(f^{(k)})_{k=1}^\infty$, by
    \begin{equation}\label{solution representation: BH}
        f^{(k)}(t) : = \int_{\mathcal{P}} h(t)^{\otimes k} d\pi(h_0),\quad t\in[0,T],\,\, k\in\N.
    \end{equation}
Given $k\in\N$ and $t\in[0,T]$,  we have
\begin{align}
        e^{\mu k}\Vert T^{-t}_k f^{(k)}(t)\Vert_{k,p,q,\alpha,\beta} &\leq  e^{\mu k}\int_{\mathcal{P}} \Vert T^{-t}_k h^{\otimes k}(t)\Vert_{k,p,q,\alpha,\beta} d\pi(h_0)\nonumber \\
        & =  e^{\mu k}\int_{\mathcal{P}} \Vert T^{-t}_1 h(t)\Vert_{p,q,\alpha,\beta}^k d\pi(h_0) \label{use of tensorization of transport}\\
        &\leq 1\label{final estimate existence},
    \end{align}
where to obtain \eqref{use of tensorization of transport} we used \eqref{tensorization of transport norm}, and to obtain \eqref{final estimate existence} we used estimate \eqref{bound on h existence proof} and the fact that $\pi$ is a probability measure. Taking supremum in time estimate \eqref{sc 2 stability estimate hierarchy} follows. In particular, $\mathcal{T}^{-(\cdot)}F(\cdot)\in \mathcal{X}_{p,q,\alpha,\beta,\mu,T}^\infty$.

 Now, a standard computation shows that $F$ is a mild $\mu$-solution to the Boltzmann hierarchy \eqref{BH}, corresponding to initial data $F_0\in \mathcal{X}_{p,q,\alpha,\beta,\mu'}^\infty\subset \mathcal{X}_{p,q,\alpha,\beta,\mu}^\infty $, and this solution is unique due to Theorem \ref{Uniqueness theorem}. 
 
 The conservation laws
 \eqref{conservation of mass: BH}-\eqref{conservation of energy: BH}, follow from representation \eqref{solution representation: BH}, Fubini's theorem, and the conservation laws  \eqref{conservation of mass equation}-\eqref{conservation of energy equation} at the level of the Boltzmann equation. 

Finally, if additionally $F_0$ is tensorised, i.e. $F_0=(f_0^{\otimes k})_{k=1}^\infty$ with  $\|f_0\|_{p,q,\alpha,\beta}\leq e^{-\mu'}$, we prove  the stability estimate  \eqref{stability estimate hierarchy}. In that case $F=(f^{\otimes k})_{k=1}^\infty$, where $f$ is the mild solution of the Boltzmann equation with initial data $f_0$, obtained by Theorem \ref{BE is well posed}. In particular, by \eqref{bound wrt initial data equation}, we have $\|T_1^{-t}\|_{p,q,\alpha,\beta}\leq 2\|f_0\|_{p,q,\alpha,\beta}$. Therefore, using \eqref{tensorization of transport norm} and \eqref{norm of tensors}, we obtain
\begin{align*}
e^{\mu k}\|T_k^{-t}f^{\otimes k}(t)\|_{k,p,q,\alpha,\beta}&= e^{\mu k}\|T_1^{-t}f(t)\|_{p,q,\alpha,\beta}^k\leq 2^ke^{\mu k} \|f_0\|_{p,q,\alpha,\beta}^k\leq e^{\mu' k} \|f_0^{\otimes k}\|_{k,p,q,\alpha,\beta}\leq \|F_0\|_{p,q,\alpha,\beta,\mu',T}.
\end{align*}
Taking supremum over time, bound \eqref{stability estimate hierarchy} follows.

\end{proof}

\section{Appendix}\label{appendix}

Here we present the proofs of Lemma \ref{lemma on positions weight} and Lemma \ref{lemma on velocities weight}. As mentioned, for $d=3$ these proofs can be found in \cite{beto85} and \cite{to86} respectively. Inspired by these, we extend these results to arbitrary dimension $d\geq 3$.

\begin{proof}[Proof of Lemma \ref{lemma on positions weight}] Let us fix $t\geq 0$.  
Notice that for  $s\ge 0$ there holds
\begin{align}
&|x+s\xi|\geq |x| \Leftrightarrow 2sx\cdot\xi+s^2|\xi|^2\geq 0\Leftrightarrow s\geq \frac{-2 x\cdot \xi}{|\xi|^2},\label{iff ineq xi}\\
&|x+s\eta|\geq |x| \Leftrightarrow 2sx\cdot\eta+s^2|\eta|^2\geq 0\Leftrightarrow s\geq \frac{-2 x\cdot \eta}{|\eta|^2}.\label{iff ineq eta}
\end{align}
We define $h=\min\left\{ \frac{-2 x\cdot \xi}{|\eta|^2}, \frac{-2 x\cdot \eta}{|\eta|^2}\right\}$. Then we have the following cases:

Case 1: $0<h<t$. We can write
\begin{align*}\int_0^t \l  x+s\xi\r^{-p}\l x+s\eta\r^{-p}\,ds= I_1+I_2,
\end{align*}
where
$$I_1= \int_0^h \l x+s\xi\r^{-p}\l x+s\eta\r^{-p}\,ds,\quad I_2=\int_h^t \l x+s\xi\r^{-p}\l x+s\eta\r^{-p}\,ds.$$

We first estimate $I_1$. 
Fix $s\in[0,h]$, so $s\le \min\left\{ \frac{-2 x\cdot \xi}{|\eta|^2}, \frac{-2 x\cdot \eta}{|\eta|^2}\right\} $. Then \eqref{iff ineq xi}-\eqref{iff ineq eta} imply that $2sx\cdot\xi+s^2|\xi|^2$ and $2sx\cdot\eta+s^2|\eta|^2$ are non-positive, thus
\begin{equation}\label{condition on product}
   (2sx\cdot\xi+s^2|\xi|^2)(2sx\cdot\eta+s^2|\eta|^2)\geq 0.
\end{equation} 
Let us write $n:=\xi+\eta$. Since $\xi\cdot\eta=0$, we have $|n|^2=|\xi|^2+|\eta|^2>0$, since $\xi,\eta\neq 0$. Then, we obtain
\begin{align}
&\l x+s\xi\r^2\l x+s\eta\r^2\nonumber\\
&= (1+|x|^2+2sx\cdot\xi+s^2|\xi|^2)(1+|x|^2+2sx\cdot\eta+s^2|\eta|^2) \nonumber\\
&=(1+|x|^2)(1+|x|^2+2sx\cdot\eta+s^2|\eta|^2+2sx\cdot\xi+s^2|\xi|^2)+(2sx\cdot\xi+s^2|\xi|^2)(2sx\cdot\eta+s^2|\eta|^2)\nonumber\\
&\geq \left(1+|x|^2\right)\left(1+|x|^2+2sx\cdot(\xi+\eta)+ s^2(|\xi|^2+|\eta|^2)\right)\label{bound on traveling weights 1}\\
&\geq \l x\r^{2}(1+(s\left|n\right|+x\cdot \hat{n})^2)\nonumber,
\end{align}
where to obtain \eqref{bound on traveling weights 1} we used \eqref{condition on product}. Hence,  we have
\begin{align}
I_1 \leq \l  x\r^{-p}\int_0^h \left(1+(s\left|n\right|+x\cdot \hat{n})^2\right)^{-p/2}\,ds
    \leq\frac{\l  x\r^{-p}}{|n|}\int_{-\infty}^{\infty}(1+r^2)^{-p/2}\,dr
    \leq \frac{\sqrt{2}p}{p-1}\,\,\frac{\l  x\r^{-p}}{\min\{|\xi|,|\eta|\}},\label{estimate on I_1}
\end{align}
where we used the fact that $|n|^2=|\xi|^2+|\eta|^2\geq 2\min^2\{|\xi|,|\eta|\}$, as well as the integral bound $\int_{-\infty}^\infty (1+r^2)^{-p/2}\,dr\leq \frac{2p}{p-1}$.

Let us now estimate $I_2$. Consider $s\in[h,t]$, so either $s\geq -\frac{2x\cdot\xi}{|\xi|^2}$ or $s\geq -\frac{2x\cdot\eta}{|\eta|^2}$. Assume that $s\geq -\frac{2x\cdot\xi}{|\xi|^2}$.  Then, by \eqref{iff ineq xi} we have that $|x+s\xi|\geq |x|$. Therefore, using the triangle inequality, we obtain
\begin{align}
I_2&\leq \l x\r^{-p}\int_h^t (1+|x+s \eta|^2)^{-p/2}\,ds\nonumber \leq \l x\r^{-p}\int_{-\infty}^{+\infty} \left(1+\left(s|\eta|-|x|\right)^2\right)^{-p/2}\,ds\nonumber\\
&= \frac{\l  x\r^{-p}}{|\eta|}\int_{-\infty}^{+\infty} (1+r^2)^{-p/2}\,dr\nonumber\leq \frac{2p}{p-1}\,\, \frac{\l  x\r^{-p}}{|\eta|}. 
\end{align}
Now if $s\geq -\frac{2x\cdot\eta}{|\eta|^2}$, the same argument gives $I_2\leq \frac{2p}{p-1}\,\frac{\l  x\r^{-p}}{|\xi|}$. In either case, we have
\begin{equation}\label{estimate on I_2}
    I_2\le \frac{2p}{p-1}\,\,\frac{\l x\r^{-2p}}{\min\{|\xi|,|\eta|\}}.
\end{equation}
Combining \eqref{estimate on I_1}-\eqref{estimate on I_2}, we obtain \eqref{estimate on I}.

Case 2: $h>t$. We have $s<h$, for all $s\in[0,t]$, thus the same reasoning we used to compute $I_1$ above applies in that case  as well.

Case 3: $h<0$. We have $s>h$, for all $s\in[0,t]$, thus the same reasoning we used to compute $I_2$ applies in that case as well.

The proof is complete.

\end{proof}

Before we prove Lemma \ref{lemma on velocities weight}, we prove the following auxiliary convolution estimate

\begin{lemma}\label{convolution lemma}
Let $q>d+\gamma-1$. Then there exists a positive constant $L_q$ such that 
\begin{equation}\label{convolution estimate}\int_{\R^d}|y-v|^{\gamma-1}\l y\r^{-q}\,dy\le L_q,\quad\forall v\in\R^d. 
\end{equation}
\end{lemma}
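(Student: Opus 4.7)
The plan is to prove the estimate by a case analysis based on the magnitude of $|v|$, combined with a decomposition of $\R^d$ into regions where either the singular factor $|y-v|^{\gamma-1}$ or the polynomial weight $\l y\r^{-q}$ can be replaced by an explicit majorant that is uniform in $v$.

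First I would handle the small velocity regime $|v|\le 1$. After the change of variables $u=y-v$, the integrand becomes $|u|^{\gamma-1}\l u+v\r^{-q}$, and a Peetre-type inequality yields $\l u+v\r^{-q}\le C\l u\r^{-q}$ uniformly for $|v|\le 1$. The resulting majorant $\int_{\R^d}|u|^{\gamma-1}\l u\r^{-q}\,du$ is finite because $|u|^{\gamma-1}$ is locally integrable near the origin (the exponent $\gamma-1>-d$ is built into the standing assumption $\gamma>1-d$), and because for $|u|>1$ the integrand is comparable to $|u|^{\gamma-1-q}$, which is integrable at infinity precisely under the hypothesis $q>d+\gamma-1$.

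Next I would treat the large velocity regime $|v|>1$ by splitting $\R^d$ into three regions. In region (A), $|y|\le |v|/2$, one has $|y-v|\ge |v|/2$, so $|y-v|^{\gamma-1}\le(|v|/2)^{\gamma-1}$ (using $\gamma\le 1$), while $\int_{|y|\le|v|/2}\l y\r^{-q}\,dy$ is dominated by a constant, by $C\log|v|$, or by $C|v|^{d-q}$ according to whether $q>d$, $q=d$, or $q<d$. In region (B), $|y-v|\le |v|/2$, one has $|y|\ge |v|/2$ and hence $\l y\r^{-q}\lesssim |v|^{-q}$, while $\int_{|y-v|\le|v|/2}|y-v|^{\gamma-1}\,dy\le C|v|^{d+\gamma-1}$. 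In region (C), the complement of (A) and (B), I would subdivide further by comparing $|y|$ with $2|v|$: on $|y|>2|v|$ one has $|y-v|\sim|y|$ and the integrand is majorised by $|y|^{\gamma-1-q}$, whose integral over $\{|y|>2|v|\}$ is of order $|v|^{d+\gamma-1-q}$; on the shell $|v|/2<|y|\le 2|v|$ both weights are comparable to powers of $|v|$, and the shell's volume $\sim|v|^d$ produces a contribution of the same order. In every subregion the bound is controlled by $|v|^{d+\gamma-1-q}$ up to logarithms, and hence uniform in $v$ because $q>d+\gamma-1$.

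The main obstacle will be the intermediate regime $d+\gamma-1<q\le d$, where $\l y\r^{-q}$ is not integrable on $\R^d$ and a crude bound $|y-v|^{\gamma-1}\le 1$ would leave a divergent tail. The point of the three-region decomposition in the large $|v|$ case is precisely to force the factor $|y-v|^{\gamma-1}$ (with $\gamma-1<0$) to contribute genuine decay in $|v|$, so that the growth of $\int_{|y|\le|v|/2}\l y\r^{-q}\,dy$ is absorbed exactly and every region produces the balanced exponent $d+\gamma-1-q<0$. Combining the bounds from all regions with the small-$|v|$ estimate then yields the uniform constant $L_q$.
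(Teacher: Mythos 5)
Your proof is correct, but it takes a different route from the paper. You split first on $|v|\le 1$ versus $|v|>1$, using a Peetre-type bound in the small-$|v|$ case and a geometric decomposition of $\mathbb{R}^d$ into three regions (near origin, near $v$, and the complement) in the large-$|v|$ case, obtaining a bound of order $|v|^{d+\gamma-1-q}$ up to logarithms. The paper instead decomposes the $y$-domain directly according to whether $|y-v|>\l y\r$ or $|y-v|<\l y\r$: on the first set $|y-v|^{\gamma-1}\le\l y\r^{\gamma-1}$ (since $\gamma-1\le 0$), giving $\int\l y\r^{\gamma-1-q}\,dy<\infty$; on the second set one further splits at $|y-v|=1$, using $\l y\r^{-q}\le 1$ near the singularity and $\l y\r^{-q}<|y-v|^{-q}$ on the tail. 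This yields a $v$-independent majorant in a single pass, without any case analysis on $|v|$ or on the size of $q$ relative to $d$, and produces a fully explicit constant. Both arguments are sound; the paper's is shorter and avoids the bookkeeping you have to carry through the three regions (including the extra subdivision of region (C) and the separate treatment of $q<d$, $q=d$, $q>d$), whereas your version makes the pointwise decay in $|v|$ visible, which could be useful if one wanted a quantitative decay rather than mere boundedness.

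One minor imprecision: you assert that "in every subregion the bound is controlled by $|v|^{d+\gamma-1-q}$ up to logarithms," but in region (A) with $q>d$ the bound you actually obtain is $\lesssim|v|^{\gamma-1}$, a larger exponent when $q>d$; this is still $\le 1$ for $|v|\ge 1$ since $\gamma\le 1$, so the conclusion stands, but the stated uniformity of the exponent does not.
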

\begin{proof}
We decompose as
$$\int_{\R^d}|y-v|^{\gamma-1}\l y\r^{-q}\,dy=\int_{|y-v|>\l y\r}|y-v|^{\gamma-1}\l y\r^{-q}\,dy+\int_{|y-v|<\l y\r}|y-v|^{\gamma-1}\l y\r^{-q}\,dy$$
We have
\begin{align}
\int_{|y-v|>\l y\r}|y-v|^{\gamma-1}\l y\r^{-q}\,dy&\leq \int_{\R^d}  \l y\r^{\gamma-1-q}\,dy\nonumber= \omega_{d-1}\int_0^\infty r^{d-1}(1+r^2)^{\frac{\gamma-1-q}{2}}\,dr\nonumber\\
&\leq \frac{\omega_{d-1}}{d}+\omega_{d-1}\int_1^{\infty}r^{d-2+\gamma-q}\,dr=\omega_{d-1}\left(\frac{1}{d}+\frac{1}{q-d-\gamma+1}\right),\label{convolution estimate 1}
\end{align}
since $\gamma\in (1-d,1]$ and $q>d-1+\gamma$.

We also have
\begin{align}
\int_{|y-v|<\l y\r}  |y-v|^{\gamma-1}\l y\r^{-q}\,dy&= \int_{|y-v|<1}  |y-v|^{\gamma-1}\l y\r^{-q}\,dy+\int_{1<|y-v|<\l y\r}|y-v|^{\gamma-1}\l y\r^{-q}\,dy\nonumber\\
&\leq \int_{|y-v|<1}|y-v|^{\gamma-1}\,dy+\int_{|y-v|>1}|y-v|^{\gamma-1-q}\,dy\nonumber\\
&=\omega_{d-1}\int_0^1 r^{d+\gamma-2}\,dr+\omega_{d-1}\int_1^\infty r^{d+\gamma-2-q}\,dr\nonumber\\
&=\omega_{d-1}\left(\frac{1}{d+\gamma-1}+\frac{1}{q+1-d-\gamma}\right),\label{convolution estimate 2}
\end{align}
since $\gamma\in (1-d,1]$ and $q>d-1+\gamma$. Combining \eqref{convolution estimate 1}-\eqref{convolution estimate 2}, estimate \eqref{convolution estimate} follows.

\end{proof}

\begin{lemma}\label{convolution lemma 2}
The following hold:
\begin{itemize}
    \item[(i)] Let $q>d$. Then, there exists a positive constant $\widetilde{L}_q$ such that
    \begin{equation}\label{q>d}
    \int_{\R^d}|y-v|^{1-d}\l y\r^{-q}\,dy\le \widetilde{L}_q\left(|v|^{1-d}+|v|\l v\r^{-q}\right),\quad \forall v\in\R^d.  
    \end{equation}
    \item[(ii)] Let $d-1<q\le d$. Then, there exists a positive constant $\widetilde{L}_q'$ such that
    \begin{equation}\label{q<d}
    \int_{\R^d}|y-v|^{1-d}\l y\r^{-q}\,dy\le \widetilde{L}_q' |v|^{1-q^*},\quad \forall v\in\R^d,    
    \end{equation}
    where we denote $q^*=\frac{q-d+1}{2}.$
\end{itemize}

\end{lemma}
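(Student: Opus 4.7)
The approach for both parts is to decompose $\R^d$ according to the relative sizes of $|y-v|$, $\l y\r$, and $|v|$, reducing each sub-integral to one in which one factor can be bounded pointwise by a constant depending only on $v$ while the remaining factor is integrated in closed form. This is the same philosophy as the proof of Lemma \ref{convolution lemma}, but the stronger singularity $|y-v|^{1-d}$ at $y=v$ now forces a finer localization around $y=v$ itself.

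Fix $v\neq 0$ and decompose $\R^d=A\cup A^c$ with $A=\{y:|y-v|\le |v|/2\}$. On $A$ the reverse triangle inequality gives $|y|\ge |v|/2$, hence $\l y\r\ge \l v\r/2$ and $\l y\r^{-q}\le 2^q\l v\r^{-q}$, so the $A$-contribution reduces to a scalar multiple of $\l v\r^{-q}\int_{|y-v|\le |v|/2}|y-v|^{1-d}\,dy=(\omega_{d-1}|v|/2)\l v\r^{-q}$ via a one-variable polar calculation; this yields the term $|v|\l v\r^{-q}$ appearing in part (i). On $A^c$ the key pointwise bound is $|y-v|^{1-d}\le(|v|/2)^{1-d}$, which reduces the problem to estimating $\int_{A^c}\l y\r^{-q}\,dy$.

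In part (i) the assumption $q>d$ makes $\l y\r^{-q}$ globally integrable (finite by integration over spherical shells as in Lemma \ref{convolution lemma}), so $\int_{A^c}\l y\r^{-q}\,dy$ is a finite constant, producing the $|v|^{1-d}$ term directly. In part (ii), since $q\le d$ the weight is no longer globally integrable, so I would perform a second split on $A^c$ along the lines of Lemma \ref{convolution lemma}: write $A^c=R_1\cup R_2$ with $R_1=A^c\cap\{|y-v|\le \l y\r\}$ and $R_2=A^c\cap\{|y-v|>\l y\r\}$. On $R_1$ use $\l y\r^{-q}\le |y-v|^{-q}$ and integrate $|y-v|^{1-d-q}$ over $|y-v|>|v|/2$ in polar coordinates centered at $v$, which produces a contribution of order $|v|^{1-q}$ (valid since $q>d-1\ge 2$). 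On $R_2$ use $|y-v|^{1-d}\le\l y\r^{1-d}$ and integrate $\l y\r^{1-d-q}$ over $\R^d$, which is finite because $d-1+q>d$. Summing these contributions with the $A$-estimate and then comparing exponents yields the claimed bound $|v|^{1-q^*}$.

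The main obstacle is the second split in part (ii). The exponent $q^*=(q-d+1)/2$ is precisely what emerges when one balances the two competing divergences — the $|y-v|^{1-d}$ singularity at $y=v$ and the slow decay of $\l y\r^{-q}$ at infinity — across the crossover surface $|y-v|\sim\l y\r$ used to define $R_1$ and $R_2$. One must verify that the three contributions $|v|\l v\r^{-q}$, $|v|^{1-q}$, and $O(1)$ collapse into the single power $|v|^{1-q^*}$ with constants that remain uniformly controlled for $q\in(d-1,d]$; the endpoint $q=d$ requires particular care, since logarithmic factors threaten to appear in the $R_1$ integral if the split is performed without the $\l y\r$-crossover.
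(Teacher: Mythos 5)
Part (i) of your proposal matches the paper's proof essentially verbatim: both split $\R^d$ at $|y-v|=|v|/2$, use $|y-v|^{1-d}\lesssim|v|^{1-d}$ on the far region, and $\langle y\rangle^{-q}\lesssim\langle v\rangle^{-q}$ on the near region. Part (ii) is where you genuinely diverge. The paper writes $y=v+z$, passes to spherical coordinates with polar axis along $v$, completes the square in the radial variable, and then distributes the exponent $-q/2$ between the $|v|\sin\theta$ and residual $\rho$ factors to read off the power of $|v|$. You instead keep the $|y-v|\lessgtr|v|/2$ split from (i) and add the $|y-v|\lessgtr\langle y\rangle$ crossover that already drives Lemma \ref{convolution lemma}; this is more elementary and avoids the polar computation altogether, which is a real simplification.

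Two corrections to how you describe the endgame. The pieces $|v|\langle v\rangle^{-q}$, $|v|^{1-q}$, $O(1)$ do not ``collapse'' into $|v|^{1-q^*}$: since $1-q<0<1-q^*$, for $|v|\ge 1$ all three are $O(1)$ and the inequality $O(1)\le C|v|^{1-q^*}$ is automatic --- the exponent $1-q^*$ is slack you absorb, not a power you must reproduce, and your bound there is actually sharper than the lemma asserts. For $|v|<1$ your $R_1$ term $|v|^{1-q}$ blows up while $|v|^{1-q^*}\to 0$, so the argument does not close; but this is not a defect of your method --- the left-hand side tends to a positive finite constant as $v\to 0$, so no bound of the form $C|v|^{1-q^*}$ with $1-q^*>0$ can hold there, and the ``$\forall v\in\R^d$'' in the statement is an overreach. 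The lemma is only invoked inside the case $|v|>1$ in the proof of Lemma \ref{lemma on velocities weight}, where your argument is fine. Lastly, your worry about logarithms at $q=d$ is unfounded: $\int_{|v|/2}^\infty r^{-q}\,dr$ converges for every $q>1$ with no endpoint degeneracy.
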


\begin{proof}
 Assume first that $q>d$. Then we have 
\begin{align}
&\int_{\R^d}|y-v|^{1-d}(1+|y|^2)^{-q/2}dy\nonumber \\
&= \int_{|y-v|>\frac{|v|}{2}}|y-v|^{1-d}(1+|y|^2)^{-q/2}dy +\int_{|y-v|<\frac{|v|}{2}}|y-v|^{1-d}(1+|y|^2)^{-q/2}dy \nonumber  \\
&\le 2^{d-1}|v|^{1-d}\int_{\R^d} (1+|y|^2)^{-q/2}dy + \left(\frac{2}{3}\right)^q\l v\r^{-q}\int_{|y-v|<\frac{|v|}{2}}|y-v|^{1-d}\,dy\nonumber\\
&\le \frac{2^{d-1}\omega_{d-1}q}{d(q-d)}|v|^{1-d}+\left(\frac{2}{3}\right)^q\l v\r^{-q}\int_{|y|<\frac{3}{2}|v|}|y-v|^{1-d}\,dy\label{bound on y}\\
&\le \frac{2^{d-1}\omega_{d-1}q}{d(q-d)}|v|^{1-d}+\left(\frac{2}{3}\right)^q\l v\r^{-q}\int_{|y|<\frac{3}{2}|v|}|y-v|^{1-d}\,dy\nonumber\\
&\le \frac{2^{d-1}\omega_{d-1}q}{d(q-d)}|v|^{1-d}+ \left(\frac{2}{3}\right)^{q-1}\omega_{d-1}|v|\l v\r^{-q},\nonumber\\
&\le \widetilde{L}_q\left(|v|^{1-d}+|v|\l v\r^{-q}\right)
\end{align}
where to obtain \eqref{bound on y} we use the fact that when $|y-v|<|v|/2$, we have $|y|\leq |y-v|+|v|<\frac{3}{2}|v|$. 
Estimate \eqref{q>d} is proved.

Now assume that $d-1<q\le d$. Let us write $q^*=\frac{q-d+1}{2}.$ 
Integrating in spherical coordinates with axis in the direction of $v$, we have
\begin{align*}
 \int_{\R^d}|y-v|^{1-d}\l y\r^{-q}\,dy&=\int_{\R^d}(1+|v+z|^2)^{-q/2}|z|^{1-d}\,dz\\
 &=\omega_{d-2}\int_0^\pi\int_0^\infty (1+|v|^2+2|v|r\cos\theta+r^2)^{-q/2}\sin^{d-2}\theta\,dr\,d\theta\\
 &=\omega_{d-2}\int_0^\pi\int_{0}^\infty \left(1+|v|^2\sin^2\theta +\left(
r+|v|\cos\theta\right)^2\right)^{-q/2}\sin^{d-2}\theta\,dr\,d\theta\\
&\le\omega_{d-2}\int_0^\pi\int_{-\infty}^\infty (1+|v|^2\sin^2\theta+\rho^2)^{-q/2}\sin^{d-2}\theta\,d\rho\,d\theta\\
&\leq \omega_{d-2}\int_0^\pi\int_{-\infty}^\infty \frac{(\sin\theta)^{d-1-q^*}}{|v|^{q^*-1}}(1+\rho^2)^{\frac{-1+q^*-q}{2}}\,d\rho\,d\theta\\
&=\omega_{d-2}|v|^{1-q^*}\left(\int_0^\pi (\sin\theta)^{d-1-q^*}\,d\theta\right)\left(\int_{-\infty}^\infty (1+\rho^2)^{\frac{-1+q^*-q}{2}}\,d\rho\right)\\
&\leq \widetilde{L}_q' |v|^{1-q^*},
\end{align*}
since $d-1<q^*<q\le d$.
\end{proof}

Now we are in the position to prove Lemma \ref{lemma on velocities weight}.

\begin{proof}[Proof of Lemma \ref{lemma on velocities weight}]
Let us define 
$$I(v)=\int_{\R^d\times\mathbb{S}^{d-1}}\frac{|u|^{\gamma-1}}{\sqrt{1-(\hat{u}\cdot\sigma)^2}} \frac{\l v\r^{q}}{\l v^*\r^{q}\l v_1^*\r^{q}}\,d\sigma\,dv_1,
$$
where we denote $u=v_1-v$.
Notice that for any $\hat{n}\in \S^{d-1}$,  integration in spherical coordinates yields
\begin{align}\label{b integral}
\int_{\S^{d-1}} \frac{1}{\sqrt{1-(\hat{n}\cdot\sigma)^2}}\,d\sigma&\leq \omega_{d-2}\int_0^\pi \sin^{d-3}(\theta)\,d\theta \leq \omega_{d-2}\pi,  
\end{align}
where by $\omega_{d-2}$ we will denote the area of $\S^{d-2}$.

We assume first that $|v|\leq 1$. Fixing $v_1\in\R^d$ and $\sigma\in\S^{d-1}$, conservation of energy yields
$$\l v^*\r^2\l v_1^*\r^2=(1+|v^*|^2)(1+|v_1^*|^2)\geq 1+|v_1|^2,$$
so
$$|u|^{\gamma-1}\frac{\l v\r^{q}}{\l v^*\r^{q}\l v_1^*\r^{q}}\leq \frac{2^{q/2}|u|^{\gamma-1}}{(1+|v_1|^2)^{q/2}}.$$
Using Fubini's theorem, bound \eqref{b integral} and Lemma \ref{convolution lemma} we obtain
$$I(v)\leq 2^{q/2}\omega_{d-2}\pi   \int_{\R^{d}}\frac{|u|^{\gamma-1}}{(1+|v_1|^2)^{q/2}}\,dv_1\le 2^{q/2}\omega_{d-2}\pi L_q.    $$
Since $|v|\leq 1$ was arbitrary, we conclude that
\begin{equation}\label{U_q^1}
    \sup_{|v|\leq 1}I(v)\leq U_{q}^1.
\end{equation}

Now, fix $|v|> 1$. We decompose $\R^d$ as follows:
\begin{equation}\label{decomposition of R}
   \R^{d}=A\cup B:=\left\{v_1\in\R^d\,:\,|u|\leq\frac{|v|}{2}\right\}\cup \left\{v_1\in\R^d\,:\,|u|>\frac{|v|}{2}\right\}. 
\end{equation}
We first focus on the set $A$. 
For any $v_1\in A$ and $\sigma\in\S^{d-1}$, we have
$$2v^*\cdot v_1^*=|v^*|^2+|v_1^*|^2-|v_1^*-v^*|^2=|v|^2+|v_1|^2-|u|^2.$$
Moreover, by triangle inequality, we have 
$|v_1|=|v+u|\geq |v|-|u|\geq \frac{|v|}{2},$ thus
$$v^*\cdot v_1^*\geq \frac{|v|^2}{2}\Rightarrow |v^*|^2|v_1^*|^2\geq |v^*\cdot v_1^*|^2\geq \frac{|v|^4}{4}$$
This bound and conservation of energy yield $\l v^*\r^2\l v_1^*\r^2\geq \frac{1}{4}(1+|v|^2)^2$,
thus 
\begin{equation}\label{bound on ratio A}
\frac{\l v\r^{q}}{\l v^*\r^{q}\l v_1^*\r^{q}}\leq \frac{2^q}{(1+|v|^2)^{q/2}}.
\end{equation}
Write
$$I_A(v):=\int_{A\times\S^{d-1}}\frac{|u|^{\gamma-1}}{\sqrt{1-(\hat{u}\cdot\sigma)^2}} \frac{\l v\r^{q}}{\l v^*\r^{q}\l v_1^*\r^{q}}\,d\sigma\,dv_1
$$
Using \eqref{bound on ratio A}, Fubini's theorem and \eqref{b integral}, we obtain
\begin{align}
I_A(v)&\leq \frac{2^q\omega_{d-2}\pi }{(1+|v|^{2})^{q/2}}\int_{|u|\leq \frac{|v|}{2}}|u|^{\gamma-1} \,dv_1\nonumber\\
&= \frac{2^q\omega_{d-2}\omega_{d-1}\pi }{(1+|v|^2)^{q/2}}\int_0^{|v|/2}r^{d+\gamma-2}\,dr\nonumber\\
&\le\frac{4^q\omega_{d-2}\omega_{d-1}\pi }{(d+\gamma-1)2^{d+\gamma-1}}|v|^{d+\gamma-1-q}\nonumber\\
&\leq  \frac{4^q\omega_{d-2}\omega_{d-1}\pi}{(d+\gamma-1)2^{d+\gamma-1}}:=U_q^2\label{U_q^2}
\end{align}
since $\gamma\in(1-d,1]$, $q>d+\gamma-1$ and $|v|>1$.

Next, we tackle the integral 
\begin{equation}\label{B integral Toscani Lemma}
 I_B(v):=\int_{B\times\S^{d-1}} \frac{|u|^{\gamma-1}}{\sqrt{1-(\hat{u}\cdot\sigma)^2}} \frac{\l v\r^{q}}{\l v^*\r^{q}\l v_1^*\r^{q}}\,d\sigma\,dv_1 .
\end{equation}
Notice that  given $v_1\in B$ and $\sigma\in\S^{d-1}$, we have $|v_1^*| > \frac{|v|}{4}$ or $|v^*| > \frac{|v|}{4}$.
Indeed if $|v^*|,\,|v_1^*|\leq |v|/4$, \eqref{relative velocity magnitude} and the triangle inequality would imply $|u|=|v^*_1-v^*|\leq |v|/2$, which is not possible since $v_1\in B$.
Hence we can bound
\begin{equation}\label{decomposition of I_B}
    I_B(v)\le I_B^1(v)+I_B^2(v),
\end{equation}
where
\begin{align*}
I_B^1(v)&= \int_{\R^d\times\S^{d-1}} \frac{|u|^{\gamma-1}}{\sqrt{1-(\hat{u}\cdot\sigma)^2}} \frac{\l v\r^{q}}{\l v^*\r^{q}\l v_1^*\r^{q}}\mathds{1}_{[|v_1^*-v^*|>|v|/2,\,|v_1^*|>|v|/4]} \,d\sigma\,dv_1   \\
I_B^2(v)&= \int_{\R^d\times\S^{d-1}} \frac{|u|^{\gamma-1}}{\sqrt{1-(\hat{u}\cdot\sigma)^2}} \frac{\l v\r^{q}}{\l v^*\r^{q}\l v_1^*\r^{q}}\mathds{1}_{[|v_1^*-v^*|>|v|/2,\,|v_1^*|>|v|/4]} \,d\sigma\,dv_1 
\end{align*}
Without loss of generality, it suffices to estimate $I_B^1(v)$. Indeed, substituting $\sigma\mapsto -\sigma$, one can see that $I_B^2(v)=I_B^1(v)$. 

In order to estimate $I_B^1(v)$, we will use a Carleman-type representation \cite{Carleman}. In particular, we  use Proposition A.2 from \cite{grst11}, as well as \eqref{relative velocity magnitude} and \eqref{Carleman formula}, to express $I_B^1(v)$ as follows:
$$I_B^1(v)=2^{d-3}\int_{\R^d}\frac{\l v\r^q}{|v^*-v|^2\l v^*\r^q}\int_{E_{v,v^*}}\frac{|v_1^*-v^*|^{\gamma}}{|v_1^*-v||v_1^*-v^*|^{d-3}\l v_1^*\r^q}\mathds{1}_{[|v_1^*-v^*|>|v|/2,\,|v_1^*|>|v|/4]}\,d\pi(v_1^*)\,dv^*,$$
where given $v^*\in\R^{d}$, $E_{v,v^*}$ is the hyperplane given by
$$E_{v,v^*}=\left\{v_1^*\in\R^d: (v^*-v)\cdot(v_1^*-v)=0\right\},$$
and $d\pi$ is the induced surface measure on $E_{v,v^*}$. Notice that on $E_{v,v^*}$, we have 
\begin{equation}|v_1^*-v|^2+|v^*-v|^2=|v_1^*-v^*|^2,
\end{equation}
so we can bound $I_B^1(v)$ as follows:
\begin{align}
    I_B^1(v)&=2^{d-3}\int_{\R^d}\frac{\l v\r^q}{|v^*-v|^2\l v^*\r^q}\int_{E_{v,v^*}}\frac{|v_1^*-v^*|^{\gamma}\mathds{1}_{[|v_1^*-v^*|>|v|/2,\,|v_1^*|>|v|/4]}}{|v_1^*-v|\left(|v_1^*-v|^2+|v^*-v|^2\right)^{\frac{d-3}{2}}\l v_1^*\r^q}\,d\pi(v_1^*)\,dv^*\nonumber\\
     &\leq2^{d-3}\int_{\R^d} \frac{\l v\r^{q}}{|v^{*} - v|^{d-1}\l v^{*}\r^{q}} \int_{E_{v,v^*}}  \frac{|v^*-v_1^*|^{\gamma}}{|v_1^{*} - v| \l v_1^{*} \r^{q}}\mathds{1}_{[|v_1^*-v^*|>|v|/2,\,|v_1^*|>|v|/4]}  d \pi(v_1^*) dv^{*}.\label{I_1 est 2}
\end{align}
Let us define 
\begin{align*}
 L_B^1(v)&=      \int_{\R^d} \frac{\l v\r^{q}}{|v^{*} - v|^{d-1}\l v^{*}\r^{q}} \int_{E_{v,v^*}\cap[|v_1^*-v|\le |v^*-v|]}  \frac{|v^*-v_1^*|^\gamma}{|v_1^{*} - v| \l v_1^{*} \r^{q}}\mathds{1}_{[|v_1^*-v^*|>|v|/2,\,|v_1^*|>|v|/4]}  d \pi(v_1^*) dv^{*} \\
 \widetilde{L}_B^1(v)&=    \int_{\R^d} \frac{\l v\r^{q}}{|v^{*} - v|^{d-1}\l v^{*}\r^{q}} \int_{E_{v,v^*}\cap[|v^*-v|< |v_1^*-v|]}  \frac{|v^*-v_1^*|^\gamma}{|v_1^{*} - v| \l v_1^{*} \r^{q}}\mathds{1}_{[|v_1^*-v^*|>|v|/2,\,|v_1^*|>|v|/4]}  d \pi(v_1^*) dv^{*}
\end{align*}

By \eqref{I_1 est 2}, in order to estimate $I_B(v)$, it suffices to estimate $L_B^1(v)$ and $\widetilde{L}_B^1(v)$.

We first estimate the integral $L_B^1(v)$. Notice that given $v^*\in\R^d$, we can parametrize as follows: 
 $$E_{v,v^*}\cap[|v_1^*-v|\leq |v^*-v|]=\left\{v+(|v^*-v|\tan\theta)\,\hat{n},\quad\hat{n}\in\S^{d-1},\,\,\hat{n}\cdot(v^*-v)=0,\,\,\theta\in[0,\pi/4]\right\}.$$
 Thus writing $R=R(\theta)=|v^*-v|\tan\theta$, the elementary area is given by
$$\Delta \pi= \omega_{d-1}(R+\Delta R)^{d-1}-\omega_{d-1} R^{d-1}=\omega_{d-1}\Delta R \sum_{k=0}^{d-2} (R+\Delta R)^{d-2-k}R^k\simeq (d-1)\omega_{d-1}  R^{d-2}\Delta R,$$
which yields $$d\pi(v_1^*)=(d-1)\omega_{d-1} R^{d-2}(\theta)\,dR(\theta)=(d-1)\omega_{d-1} |v^*-v|^{d-1}\tan^{d-2}(\theta)\sec^2\theta\,d\theta.$$
Consequently
 \begin{align*} \frac{|v^*-v_1^*|^\gamma}{|v_1^*-v|}\,d\pi_{v_1^*}&=\frac{|v^*-v|^\gamma\sec^\gamma\theta}{|v^*-v|\tan\theta} 
 (d-1)\omega_{d-1}|v^*-v|^{d-1}\tan^{d-2}\theta\sec^2\theta \,d\theta\\
 & =(d-1)\omega_{d-1}|v^*-v|^{d+\gamma-2}\tan^{d-3}\theta\sec^{2+\gamma}\theta \,d\theta
 \end{align*}
Hence, we have
\begin{align}
  \int_{E_{v,v^*}\cap [|v_1^*-v|\leq |v^*-v|] }  &\frac{|v^*-v_1^*|^\gamma}{|v_1^{*} - v| \l v_1^{*} \r^{q}}  \mathds{1}_{[|v_1^*-v^*|>|v|/2,\,|v_1^*|>|v|/4]}d \pi(v_1^*)\nonumber\\
  &\leq \frac{4^q}{\l v\r^{q}}\int_{E_{v,v^*}\cap [|v_1^*-v|\leq |v^*-v|] }  \frac{|v^*-v_1^*|^\gamma}{|v_1^{*} - v| }  d \pi(v_1^*)\nonumber\\
  &=\frac{(d-1)\omega_{d-1} 4^q |v^*-v|^{d+\gamma-2}}{\l v\r^{q}}\int_0^{\pi/4} \tan^{d-3}\theta\sec^{2+\gamma}\theta\,d\theta\nonumber\\
  &\leq 4^q(d-1)\omega_{d-1} \max\{1,2^{\frac{2+\gamma}{2}}\}  |v^*-v|^{d+\gamma-2}\l v\r ^{-q}
\end{align}
Thus, since $q>d+\gamma-1$, Lemma \ref{convolution lemma} implies
\begin{align}
 L_B^1(v)&\leq  4^q(d-1)\omega_{d-1} \max\{1,2^{\frac{2+\gamma}{2}}\} \int_{\R^d} |v^*-v|^{\gamma-1}\l v^*\r^{-q}\,dv^*
 \nonumber\\
 &\leq 4^q(d-1)\omega_{d-1}\max\{1,2^{\frac{2+\gamma}{2}}\} L_q\label{bound on L}
\end{align}

Now we focus on  $\widetilde{L}_B^1$. Since $\gamma\leq 1$ and $|v^*-v_1^*|>|v|/2$ on the domain of integration,   we have
\begin{align}
  \widetilde{L}_B^1(v)&\le    \int_{\R^d} \frac{2^{1-\gamma}\l v\r^{q}|v|^{\gamma-1}}{|v^{*} - v|^{d-1}\l v^{*}\r^{q}} \int_{E_{v,v^*}\cap[|v^*-v|< |v_1^*-v|]}  \frac{|v^*-v_1^*|}{|v_1^{*} - v| \l v_1^{*} \r^{q}} \mathds{1}_{|v_1^*|>|v|/4]} d \pi(v_1^*) dv^{*} \label{first bound on L_B}
\end{align}
For the inner integral, we have
\begin{align}
 \int_{E_{v,v^*}\cap [|v^*-v|< |v_1^*-v|]} & \frac{|v^*-v_1^*|}{|v_1^{*} - v| \l v_1^{*} \r^{q}} \mathds{1}_{|v_1^*|>|v|/4]}\, d \pi(v_1^{* })\nonumber
 \\&= \int_{E_{v,v^*}\cap [|v^*-v|< |v_1^*-v|]}  \frac{\left(|v^*-v|^2+|v_1^*-v|^2\right)^{1/2}}{|v_1^{*} - v| \l v_1^{*} \r^{q}} \mathds{1}_{|v_1^*|>|v|/4]}\, d \pi(v_1^*)\nonumber \\
 &\leq \sqrt{2}\int_{E_{v,v^*}}  \frac{1}{ \l v_1^{*} \r^{q}} \mathds{1}_{|v_1^*|>|v|/4]} \,d \pi(v_1^*)\nonumber\\
 &= \sqrt{2}\int_{|v|/4}^\infty \frac{1}{(1+r^2)^{q/2}}\mathcal{H}_{d-2}(\S_r^{d-1}\cap E_{v,v^*})\,dr\label{use of co-area}\\
 &\leq \sqrt{2}\omega_{d-2}\int_{|v|/4}^\infty \frac{r^{d-2}}{(1+r^2)^{q/2}}\,dr\label{bound on co-area}\\
 &\leq \frac{4^{q+1-d}\sqrt{2}\omega_{d-2}|v|^{d-1-q}}{q+1-d},\label{final co-area,}
\end{align}
where to obtain \eqref{use of co-area} we use  the co-area formula, to obtain \eqref{bound on co-area} we use the fact that $\mathcal{H}_{d-2}(\S_r^{d-1}\cap E_{v,v^*})\leq \omega_{d-2}r^{d-2}$, and to obtain \eqref{final co-area,} we use the fact that $q>d-1$.

Combining \eqref{first bound on L_B}, \eqref{final co-area,}, we obtain 
\begin{equation}\label{final bound on L_B}\widetilde{L}_B^1(v)\leq \frac{4^{q+1-d}\sqrt{2}\omega_{d-2}}{q+1-d}|v|^{d-2+\gamma-q}\l v\r^q \int_{\R^d} |v^*-v|^{1-d}(1+|v^*|^2)^{-q/2}\,dv^*.
\end{equation}

If $q>d$, \eqref{q>d} from Lemma \ref{convolution lemma 2} yields
\begin{align}
\widetilde{L}_B^1(v)&\le \frac{4^{q+1-d}\sqrt{2}\omega_{d-2}}{q+1-d}|v|^{d-2+\gamma-q}\l v\r^{q}\widetilde{L}_q\left(|v|^{1-d}+|v|\l v\r^{-q}\right)\nonumber\\
&\leq \frac{4^{q+1-d}\sqrt{2}\omega_{d-2}}{q+1-d}\left(|v|^{\gamma-1-q}\l v\r^q+|v|^{d-1+\gamma-q}\right)\nonumber\\
&\leq  \frac{4^{q+1-d}\sqrt{2}\omega_{d-2}}{q+1-d}\left(2^{q/2}+1\right), \label{bound on L-tilde}
\end{align}
since $|v|>1$ and $\gamma\leq 1$.

If $\max\{d-1+\gamma,d-1\}<q\le d$, \eqref{q<d} from Lemma \ref{convolution lemma 2} yields
$$\widetilde{L}_B^1(v)\le  \frac{4^{q+1-d}\sqrt{2}\omega_{d-2}}{q+1-d}\widetilde{L}_q' |v|^{d-2+\gamma+q}\l v\r^q|v|^{1-q^*},$$
where $q^*=\frac{1}{2}\left(q-\max\{d+\gamma-1, d-1\}\right)$. Since $q^*>d-1+\gamma$ and $|v|>1$, we obtain 
\begin{align}
 \widetilde{L}_B^1(v) &\leq \frac{4^{q+1-d}\sqrt{2}\omega_{d-2}}{q+1-d}\widetilde{L}_q 2^{q/2}|v|^{d-1+\gamma -q^*} \le \frac{4^{q+1-d}\sqrt{2}\omega_{d-2}2^{q/2}}{q+1-d} \label{bound on L-tilde 2}.
\end{align}

By \eqref{bound on L}, \eqref{bound on L-tilde}, \eqref{bound on L-tilde 2}, we obtain $I_B^1(v)\leq U_q^3$. Since $I_B^2(v)=I_B^1(v)$, we have $I_B(v)\leq 2U_q^3$. By \eqref{U_q^2} and \eqref{decomposition of R}, we obtain $I(v)\leq U_q^2+2U_q^3$. Since $|v|>1$ was arbitrary, we have $$\sup_{|v|>1}I(v)\leq U_q^2+2U_q^3,$$ which combined with \eqref{U_q^1} yields \eqref{Uq def}.
\end{proof}

\end{document}